\newtheorem{theorem}{Theorem}[section]
\newtheorem{lemma}[theorem]{Lemma}
\newtheorem{proposition}[theorem]{Proposition}
\newtheorem{corollary}[theorem]{Corollary}
\newtheorem{claim}[theorem]{Claim}
\newtheorem{remark}[theorem]{Remark}
\newtheorem{question}[theorem]{Question}
\theoremstyle{definition}
\newtheorem{definition}[theorem]{Definition}
\newtheorem{example}[theorem]{Example}
\newtheorem*{thm:finite_family}{Theorem~\ref{thm:finite_family}}
\newtheorem*{thm:peripheral Magnus property}{Theorem~\ref{thm:peripheral Magnus property}}
\newtheorem*{thm:distinct_normal_closure}{Theorem~\ref{distinct_normal_closure}}
\newtheorem*{thm:descending_sequence}{Theorem~\ref{thm:descending_sequence}}
\newtheorem*{thm:ascending_chain}{Theorem~\ref{ascending_chain}}
\newtheorem*{thm:no_infinite_chain}{Theorem~\ref{thm:no_infinite_chain}}
\numberwithin{equation}{section}
\numberwithin{figure}{section}
\numberwithin{table}{section}
\begin{document}
\baselineskip 14pt

\title[Elements in a knot group which are trivialized by Dehn fillings]{Nontrivial elements in a knot group which are trivialized by Dehn fillings}
\author[T. Ito]{Tetsuya Ito}
\address{Department of Mathematics,
Graduate School of Science,
Kyoto University, Kyoto 606-8502, Japan}
\email{tetitoh@math.kyoto-u.ac.jp}
\thanks{The first named author has been partially supported by JSPS KAKENHI Grant Number JP15K17540 and JP16H02145.}

\author[K. Motegi]{Kimihiko Motegi}
\address{Department of Mathematics, Nihon University, 
3-25-40 Sakurajosui, Setagaya-ku, 
Tokyo 156--8550, Japan}
\email{motegi@math.chs.nihon-u.ac.jp}
\thanks{The second named author has been partially supported by JSPS KAKENHI Grant Number JP26400099 and Joint Research Grant of Institute of Natural Sciences at Nihon University for 2017. }

\author[M. Teragaito]{Masakazu Teragaito}
\address{Department of Mathematics and Mathematics Education, Hiroshima University, 
1-1-1 Kagamiyama, Higashi-Hiroshima, 739--8524, Japan}
\email{teragai@hiroshima-u.ac.jp}
\thanks{The third named author has been partially supported by JSPS KAKENHI Grant Number JP16K05149.}

\dedicatory{}

\begin{abstract}
Let $K$ be a nontrivial knot in $S^3$ with the exterior $E(K)$, 
and $\gamma \in G(K) = \pi_1(E(K), *)$ a slope element 
represented by an essential simple closed curve on $\partial E(K)$ with base point $* \in \partial E(K)$. 
Since the normal closure $\langle\!\langle \gamma \rangle\!\rangle$ of $\gamma$ in $G(K)$ 
coincides with that of $\gamma^{-1}$, 
and $\gamma$ and $\gamma^{-1}$ correspond to a slope $r \in \mathbb{Q} \cup \{ \infty \}$,  
we write $\langle\!\langle r \rangle\!\rangle = \langle\!\langle \gamma \rangle\!\rangle$. 
The normal closure $\langle\!\langle r \rangle\!\rangle$ describes elements which are trivialized by $r$--Dehn filling of $E(K)$. 
In this article, 
we prove that 
$\langle\!\langle r_1 \rangle\!\rangle = \langle\!\langle r_2 \rangle\!\rangle$ if and only if $r_1 = r_2$, 
and for a given finite family of slopes $\mathcal{S} = \{ r_1, \dots, r_n \}$, 
the intersection $\langle\!\langle r_1 \rangle\!\rangle \cap \cdots \cap \langle\!\langle r_n \rangle\!\rangle$ contains infinitely many elements except when $K$ is a $(p, q)$--torus knot and $pq \in \mathcal{S}$.  
We also investigate inclusion relation among normal closures of slope elements. 
\end{abstract}

\maketitle

{
\renewcommand{\thefootnote}{}
\footnotetext{2010 \textit{Mathematics Subject Classification.}
Primary 57M05, 57M25
\footnotetext{ \textit{Key words and phrases.}
knot group. Dehn filling, slope element, peripheral Magnus property}
}

\section{Introduction}
\label{Introduction}
Geometric aspects of Dehn fillings such as destroying and creating essential surfaces have been extensively studied by many authors; see survey articles \cite{Go_ICM,Go_Warsaw,G_small,G_Park_City} and references therein. 
In the present article we focus on a group theoretic aspect of Dehn fillings. 
Let $K$ be a nontrivial knot in $S^3$ with its exterior $E(K)$. 
Then by the loop theorem \cite{Papa} the inclusion map $i : \partial E(K) \to E(K)$ induces a monomorphism 
$i_* : \pi_1(\partial E(K), *) \to \pi_1(E(K), *)$, 
where we choose a base point  $*$ in $\partial E(K)$.  
We denote the \textit{knot group} $\pi_1(E(K), *)$ by $G(K)$ and 
its \textit{peripheral subgroup} $i_*(\pi_1(\partial E(K), *))$ by $P(K)$. 
A \textit{slope element} in $G(K)$ is a primitive element $\gamma$ in $P(K) \cong \mathbb{Z} \oplus \mathbb{Z}$, 
which is represented by an essential oriented simple closed curve on $\partial E(K)$ with base point $*$. 
Denote by $\langle\!\langle \gamma \rangle\!\rangle$ the normal closure of $\gamma$ in $G(K)$.  
Taking a standard meridian-longitude pair $(\mu, \lambda)$ of $K$, 
each slope element $\gamma$ is expressed as $\mu^m \lambda^n$ for some relatively prime integers $m, n$.
As usual we use the term \textit{slope} to mean the isotopy class of an essential unoriented simple closed curve on $\partial E(K)$. 
A slope element $\gamma$ and its inverse $\gamma^{-1}$ 
give the same normal subgroup $\langle\!\langle \gamma \rangle\!\rangle = \langle\!\langle \gamma^{-1} \rangle\!\rangle$, 
and by forgetting orientations, 
they correspond to the same slope,  
which may be identified with $m/n \in \mathbb{Q} \cup \{ \infty \}$. 
So in the following we denote 
$\langle\!\langle \gamma \rangle\!\rangle = \langle\!\langle \gamma^{-1} \rangle\!\rangle$ by $\langle\!\langle m/n \rangle\!\rangle$. 
Thus each slope $m/n$ defines the normal subgroup $\langle\!\langle m/n \rangle\!\rangle \subset G(K)$, 
which will be referred to as the \textit{normal closure of the slope $m/n$} for simplicity. 
A slope $r$ is \textit{trivial} if $r = \infty$, i.e. $r$ is represented by a meridian of $K$. 
In what follows, 
we abbreviate the base point for simplicity. 

Denote by $K(r)$ the $3$--manifold obtained from $E(K)$ by $r$--Dehn filling.  
 Then we have the following short exact sequence which relates $G(K),\ \langle\!\langle r \rangle\!\rangle$ and 
$\pi_1(K(r))$. 
\[ \{1\} \rightarrow \langle\!\langle r \rangle\!\rangle \rightarrow G(K) \rightarrow 
G(K) / \langle\!\langle r \rangle\!\rangle = \pi_1(K(r)) \rightarrow \{1\},\]
and thus 
\[
\langle\!\langle r \rangle\!\rangle
 = \{ g \in G(K)\ |\ g\ \textrm{becomes trivial in}\ \pi_1(K(r) \}.
 \]

Recall that a group $G$ possesses the \textit{Magnus property}, 
if whenever two elements $u$, $v$ of $G$ have the same normal closure, 
then $u$ is conjugate to $v$ or $v^{-1}$.  
Magnus \cite{Mag} established this property for free groups, 
and recently \cite{Bo, BS, Fe, H} prove the fundamental groups of closed surfaces have this property. 
However, in general, 
knot groups do not satisfy this property; 
see  \cite{Du, SWW, Tsau0} for details.   
With respect to Dehn fillings, 
the above observation leads us to introduce: 

\begin{definition}[\textbf{peripheral Magnus property}]
\label{peripheral_Magnus_property}
Let $K$ be a nontrivial knot in $S^3$. 
We say that the knot group $G(K)$ has the \textit{peripheral Magnus property} 
if $\langle\!\langle r \rangle\!\rangle = \langle\!\langle r' \rangle\!\rangle$ implies 
$r = r'$ for two slopes $r$ and $r'$. 
\end{definition}

Property P \cite{KM} says that $\langle\!\langle r \rangle\!\rangle = \langle\!\langle \infty \rangle\!\rangle = G(K)$ 
if and only if $r = \infty$. 
We first establish every nontrivial knot group has this property.   

\begin{theorem}
\label{thm:peripheral Magnus property}
For any nontrivial knot $K$, the knot group $G(K)$ satisfies the peripheral Magnus property, 
namely $\langle\!\langle r \rangle\!\rangle = \langle\!\langle r' \rangle\!\rangle$ 
if and only if $r = r'$.
\end{theorem}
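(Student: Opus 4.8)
The plan is to prove the nontrivial implication: distinct slopes have distinct normal closures (the converse being immediate). Write $r = m/n$ and $r' = m'/n'$, so that $\gamma_r = \mu^m\lambda^n$ and $\gamma_{r'} = \mu^{m'}\lambda^{n'}$, and suppose toward the contrapositive that $N := \langle\!\langle r \rangle\!\rangle = \langle\!\langle r' \rangle\!\rangle$ with common quotient $Q = G(K)/N = \pi_1(K(r)) = \pi_1(K(r'))$. First I would dispose of the trivial slope: if $r = \infty$ then $N = G(K)$, so $\langle\!\langle r' \rangle\!\rangle = G(K)$ and Property P forces $r' = \infty = r$; hence I may assume both slopes are finite. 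Since $\mu$ generates $H_1(E(K)) \cong \mathbb{Z}$ and $\lambda$ is null-homologous, the standard homology computation gives $H_1(K(r)) \cong \mathbb{Z}/m\mathbb{Z}$. As $H_1(Q)$ is intrinsic to $Q$, this yields $|m| = |m'|$; if $m = 0$ then $\gcd(m,n)=1$ forces $n = \pm 1$ and $r = 0 = r'$, so I may assume $m \neq 0$ and, after adjusting orientations, $m = m' \geq 1$.

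Next I would analyze the image of the peripheral subgroup $P(K)$ in $Q$. Both $\gamma_r$ and $\gamma_{r'}$ lie in $N \cap P(K)$, and if $r \neq r'$ the pairs $(m,n)$ and $(m',n')$ are non-proportional, so $\gamma_r$ and $\gamma_{r'}$ are linearly independent in $P(K) \cong \mathbb{Z}\oplus\mathbb{Z}$. Hence $N \cap P(K)$ has rank two and therefore finite index in $P(K)$, so the image of $P(K)$ in $Q$ is finite. Moreover this image is cyclic: the map $P(K) = i_*\pi_1(\partial E(K)) \to Q$ factors through $\pi_1$ of the filling solid torus, which is infinite cyclic generated by the core $\delta$. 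Thus the core $\delta$ represents a finite-order element of $Q$, simultaneously for the $r$-- and $r'$--fillings.

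The heart of the matter is then to show that no nontrivial knot admits such a ``slope-forgetting'' pair of fillings. Here I would invoke geometrization in a mild form: a closed orientable irreducible $3$--manifold with infinite fundamental group is aspherical, hence has torsion-free fundamental group. If $K(r)$ were irreducible with $Q$ infinite, the finite-order core would have to be trivial in $Q$; but the core of a filling of a nontrivial knot exterior cannot be null-homotopic (otherwise $K(r)$ would be reducible or $E(K)$ a solid torus, forcing $K$ to be trivial). Consequently $K(r)$, and likewise $K(r')$, is non-aspherical, i.e.\ reducible or of finite fundamental group. Since the two fillings share $Q$, this splits into two regimes: if $Q$ is infinite, both $K(r)$ and $K(r')$ are reducible, while if $Q$ is finite, both are spherical space forms.

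It remains to rule out that two distinct slopes produce the identical quotient in these regimes, and this is where I expect the real work to lie. When $Q$ is infinite both fillings are reducible, and the strong constraints on reducible Dehn surgeries (integrality and distance bounds for reducing slopes, together with $|m| = |m'|$) are designed to force $r = r'$. The main obstacle is the finite case, above all lens-space and homology-sphere fillings, where $|m| = |m'|$ and even the bare isomorphism type of $Q$ can look alike across slopes. My plan here is to first apply the Cyclic Surgery Theorem of Culler--Gordon--Luecke--Shalen, together with the finite (elliptic) surgery theorem, to bound the distance $\Delta(r,r')$ and to pin $K$ down to the short list of knots carrying such fillings; for torus knots the fillings are Seifert fibered with invariants given explicitly by Moser's classification, and one compares these directly. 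Concretely, once $\Delta(r,r') \le 1$ is known, the identity $m = m'$ closes the argument whenever $m \ge 2$, since $\Delta(m/n, m/n') = m\,|n - n'|$; the delicate residue is the case $m = 1$ (homology-sphere fillings, where $K(r) = S^3$ is excluded by Gordon--Luecke but Poincaré-type space forms are not), for which one must extract from the precise group $Q$ — its full isomorphism type or the Seifert data of $K(r)$, not merely $|H_1|$ — enough to separate the two slopes. Making this separation uniform, rather than knot-by-knot, is the crux.
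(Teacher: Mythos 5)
Your opening moves reproduce the paper's skeleton almost exactly: Property P for $r=\infty$, homology giving $m=m'$, the observation that two non-proportional slope elements in $N\cap P(K)$ force the peripheral image in $Q$ to be a finite cyclic group (so that a meridian power becomes a torsion element), and then the asphericity dichotomy ``$Q$ finite or $K(r)$ reducible'' --- this is the content of the paper's Lemma~\ref{power of meridian} and Proposition~\ref{inclusion}. One remark on your reducible regime: you do not need Gordon--Luecke integrality and distance bounds there. Since $Q$ is normally generated by the image of $\mu$, which is a torsion element and hence conjugate into a free factor of $Q=A\ast B$, one gets $(A\ast B)/\langle\!\langle A\rangle\!\rangle = B\neq\{1\}$ while the normal closure of that conjugate is all of $Q$ --- a contradiction. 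This elementary free-product argument (the paper's Lemma~\ref{non_reducing}) kills the reducible case outright, with no surgery machinery.

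The genuine gap is in the finite regime, which you correctly identify as the crux but both mislocate and cannot close by the means proposed. The bound $\Delta(r,r')\le 1$ you plan to invoke holds only for cyclic fillings (CGLS); for finite fillings on hyperbolic knots the best bound is $\Delta\le 2$ (Ni--Zhang, which together with the L-space genus bound and fibredness is how the paper's Proposition~\ref{non_finite_finite} settles non-torus knots), and for torus knots --- which carry almost all finite fillings --- there is no distance bound at all. Indeed the paper's critical case is $K=T_{3,2}$ with $r=(18k+9)/(3k+1)$ and $r'=(18k+9)/(3k+2)$, where $m=m'=18k+9$ and $\Delta(r,r')=18k+9$ is arbitrarily large, so neither your ``$m\ge 2$ plus $\Delta\le 1$'' step nor any distance argument applies; note also that the delicate residue is not $m=1$. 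Worse, your fallback --- extracting from $Q$ ``its full isomorphism type or the Seifert data of $K(r)$'' --- provably cannot work on this pair: by Mathieu, $K(r)$ and $K(r')$ are (orientation-reversingly) homeomorphic, so every invariant of the quotient group or of the filled manifold agrees. The only available leverage is the one your setup provides but you never exploit: equality of normal closures means the slope element of $r'$ must \emph{die in the specific quotient map} $G(K)\to\pi_1(K(r))$, not merely in some abstract copy of $Q$. The paper's Proposition~\ref{trefoil} uses exactly this, computing that in $\pi_1(K((18k+9)/(3k+1)))$ the slope element of $r'$ equals $h^{-(6k+3)}$, a power of the central fiber $h$ of order $2m$ with $2m\nmid 6k+3$, hence nontrivial, and symmetrically for the other inclusion. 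Without an argument of this kernel-level kind, your plan cannot be completed --- and you acknowledge as much, so the proposal stands as an incomplete outline rather than a proof.
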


When $K$ is a prime, non-amphicheiral knot, 
we will prove a slightly stronger version of Theorem~\ref{thm:peripheral Magnus property}; 
see Theorem \ref{thm:peripheral Magnus property_strong}. 

Theorem~\ref{thm:peripheral Magnus property} says that there is a one to one correspondence between the set of slopes, which is identified with $\mathbb{Q}\cup \{ \infty \}$, 
and the set of normal closures of slopes. 

Next we investigate for which slope $r \in \mathbb{Q} \cup \{ \infty \}$,  
its normal closure $\langle\!\langle r \rangle\!\rangle$ is finitely generated. 
There are two obvious situations where $\langle\!\langle r \rangle\!\rangle$ is finitely generated. 

\begin{itemize}
\item
If $K$ has a \textit{finite surgery slope} $r$, 
i.e. $r$--surgery on $K$ yields a $3$--manifold with finite fundamental group, 
then $\langle\!\langle r \rangle\!\rangle$ is finitely generated. 
(See the proof of Theorem \ref{finitely generated}.)
\item
If $K$ is a torus knot $T_{p, q}$, 
then $\langle\!\langle pq \rangle\!\rangle$ is an infinite cyclic normal subgroup of $G(K)$, hence finitely generated. 
\end{itemize}

Theorem~\ref{finitely generated} below classifies slopes whose normal closures are finitely generated. 

\begin{theorem}
\label{finitely generated}
Let $K$ be a nontrivial knot.
The normal closure $\langle\!\langle r \rangle\!\rangle$ is finitely generated if and only if $r$ is a finite surgery slope, 
or $K$ is a torus knot $T_{p, q}$ and $r = pq$. 
\end{theorem}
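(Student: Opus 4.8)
The plan is to prove the two implications separately, reducing the forward direction to a structural dichotomy for finitely generated normal subgroups of the knot exterior group. First I would dispatch the ``if'' direction. If $r$ is a finite surgery slope, then $\pi_1(K(r))$ is finite, so the short exact sequence $1\to \langle\!\langle r\rangle\!\rangle \to G(K)\to \pi_1(K(r))\to 1$ shows that $\langle\!\langle r\rangle\!\rangle$ has finite index in $G(K)$; since $G(K)$ is finitely presented, a finite--index subgroup is finitely generated by the Reidemeister--Schreier process. If $K=T_{p,q}$ and $r=pq$, then $\langle\!\langle pq\rangle\!\rangle$ is the infinite cyclic subgroup generated by a regular fibre, as recorded in the second bullet above, and hence is finitely generated.

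For the ``only if'' direction, suppose $\langle\!\langle r\rangle\!\rangle$ is finitely generated. If $\pi_1(K(r))$ is finite we are in the finite surgery case and are done, so assume $\pi_1(K(r))$ is infinite; equivalently $N:=\langle\!\langle r\rangle\!\rangle$ is a finitely generated normal subgroup of infinite index in $G(K)=\pi_1(E(K))$. Because $K$ is nontrivial, $E(K)$ is irreducible with infinite fundamental group, hence aspherical, and $N\neq\{1\}$ since it contains the nontrivial peripheral element $\gamma$. I would then invoke the structure theory for such subgroups (Scott's compact core theorem together with the Seifert fibred space theorem of Casson--Jungreis and Gabai, and Stallings' fibration theorem): a nontrivial finitely generated normal subgroup of infinite index in an aspherical $3$--manifold group is either infinite cyclic, with the ambient manifold Seifert fibred, or it is the fibre subgroup of a fibration of $E(K)$ over a $1$--orbifold, in which case the quotient $G(K)/N=\pi_1(K(r))$ is infinite and virtually cyclic.

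In the Seifert fibred case, the only knots with Seifert fibred exterior are torus knots, so $K=T_{p,q}$. A normal infinite cyclic subgroup of $G(T_{p,q})$ must be the centre $Z$: its image in $G(T_{p,q})/Z\cong \mathbb{Z}/p * \mathbb{Z}/q$ is a normal cyclic subgroup, which is trivial for coprime $p,q\geq 2$, so $N\subseteq Z$ and hence $N=Z=\langle\!\langle pq\rangle\!\rangle$; as $\gamma\in N\cap Z$ forces $\gamma$ to be a regular fibre, we obtain $r=pq$. In the fibration case, $K(r)$ is a closed orientable $3$--manifold with infinite virtually cyclic fundamental group, hence carries $S^2\times\mathbb{R}$ geometry; the orientable such manifolds are $S^2\times S^1$ and $\mathbb{RP}^3\#\mathbb{RP}^3$. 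The latter has non-cyclic first homology and so cannot be obtained by surgery on a knot, whose surgeries all have cyclic $H_1$, while $S^2\times S^1$ arises only from the trivial knot by Gabai's Property~R. This contradicts the nontriviality of $K$, so the fibration case cannot occur, and the proof is complete.

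The main obstacle is the ``only if'' direction, and within it the precise invocation of the $3$--manifold structure theorem: one must verify that the finitely generated normal subgroup of infinite index really falls into exactly the infinite cyclic (Seifert) alternative or the fibre alternative with virtually cyclic quotient, and in particular that the torus boundary of $E(K)$ forbids a closed surface fibre (whose mapping torus would be closed). The clean elimination of the fibration case then rests on the homological and orientability constraints on Dehn surgeries on knots together with Property~R, and the identification $r=pq$ rests on the fact that the centre is the unique normal infinite cyclic subgroup of a torus knot group.
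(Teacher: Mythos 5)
Your argument is correct, and it shares its skeleton with the paper's proof: both rest on the classification of finitely generated normal subgroups of infinite index in $3$--manifold groups (the paper quotes Hempel--Jaco and Aschenbrenner--Friedl--Wilton; your Scott/Stallings/Casson--Jungreis--Gabai package is the same structural statement, specialized in the paper as Proposition~\ref{finitely_generated_normal}). Where you genuinely diverge is in how the two alternatives are disposed of. In the fibred alternative the paper never leaves $E(K)$: since $\langle\!\langle r\rangle\!\rangle$ has finite index in the fibre group $\pi_1(\Sigma)$, the slope element is null-homologous, so $r=0$; then $\langle\!\langle \partial\Sigma\rangle\!\rangle$ would be a finite-index normal subgroup of $\pi_1(\Sigma)$, forcing the closed surface group $\pi_1(\widehat\Sigma)$ of genus $g(K)\ge 1$ to be finite --- a contradiction obtained with only abelianization and surface-group facts. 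You instead pass to the closed manifold $K(r)$, observe that its group is infinite virtually cyclic, and invoke the geometrization-dependent classification of closed orientable $3$--manifolds with two-ended fundamental group as $S^2\times S^1$ or $\mathbb{RP}^3\#\mathbb{RP}^3$, ruled out respectively by Property~R and by the cyclicity of $H_1$ of any knot surgery; this is quicker but leans on heavier machinery (admissible here, since the paper itself invokes the Poincar\'e conjecture elsewhere). Relatedly, the paper eliminates the union-of-twisted-$I$-bundles case inside Proposition~\ref{finitely_generated_normal} by noting that $H_1(E(K))\cong\mathbb{Z}$ cannot surject onto $\mathbb{Z}_2\oplus\mathbb{Z}_2$, whereas you fold that case into the virtually cyclic quotient, where it is absorbed by the same closed-manifold argument. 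Finally, in the Seifert alternative your write-up is actually more detailed than the paper's: the paper passes from case (i) to $r=pq$ without comment, while you justify it by showing that a normal cyclic subgroup of $\mathbb{Z}_p\ast\mathbb{Z}_q$ is trivial (so $N$ lies in the centre $\langle t\rangle$ generated by the regular fibre) and then using primitivity of slope elements in $P(K)$ to force $\gamma=t^{\pm 1}$, i.e.\ $r=pq$.
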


Thus generically, 
normal closures of slopes are infinitely generated, 
hence each Dehn filling trivializes an infinitely generated subgroup of $G(K)$. 
So it seems interesting to ask: 
For how many slopes of $K$ do 
their normal closures intersect nontrivially? 
Furthermore, if the intersection is nontrivial, how big is this subgroup?

\begin{theorem}
\label{thm:finite_family}
Let $K$ be a nontrivial knot in $S^3$,  
and let $\{ r_1, \dots, r_n \}$ $(n \geq 2)$ be any finite family of slopes of $K$.  
If $K$ is  a torus knot $T_{p, q}$, 
we assume that $pq \not\in \{ r_1, \dots, r_n \}$. 
Then $\langle\!\langle r_1\rangle\!\rangle \cap \cdots \cap \langle\!\langle r_n\rangle\!\rangle$ is nontrivial.
Moreover, this subgroup is finitely generated if and only if all the $r_i$ are finite surgery slopes. 
\end{theorem}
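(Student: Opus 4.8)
My plan is to treat the two assertions separately. For the nontriviality of $I:=\langle\!\langle r_1\rangle\!\rangle\cap\cdots\cap\langle\!\langle r_n\rangle\!\rangle$, I would produce a single nontrivial element lying in every factor by an iterated commutator construction. Fix slope elements $\gamma_i$ representing $r_i$. The basic observation is that for any $g\in G(K)$ and any $u\in\langle\!\langle r_{i+1}\rangle\!\rangle\cap\cdots\cap\langle\!\langle r_n\rangle\!\rangle$, the commutator $[\gamma_i,\,gug^{-1}]$ again lies in $\langle\!\langle r_{i+1}\rangle\!\rangle\cap\cdots\cap\langle\!\langle r_n\rangle\!\rangle$ (write it as $(\gamma_i\,gug^{-1}\gamma_i^{-1})(gu^{-1}g^{-1})$ and use normality), while it visibly lies in $\langle\!\langle r_i\rangle\!\rangle$ as well, being the product $\gamma_i\cdot(gug^{-1}\gamma_i^{-1}gu^{-1}g^{-1})$ of a conjugate of $\gamma_i$ and a conjugate of $\gamma_i^{-1}$. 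Setting $v_n=\gamma_n$ and $v_i=[\gamma_i,\,g_iv_{i+1}g_i^{-1}]$ for $i=n-1,\dots,1$, a downward induction then gives $v_1\in I$.

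The crux is to choose the $g_i$ so that every $v_i$ is nontrivial; equivalently, at each stage $g_iv_{i+1}g_i^{-1}$ must fail to commute with $\gamma_i$. Here the torus-knot hypothesis $pq\notin\{r_1,\dots,r_n\}$ enters decisively: it guarantees that each $\gamma_i$ is a \emph{non-central} element of $G(K)$, since the center of a knot group is nontrivial only for torus knots, where it is generated by the $pq$-slope element. Moreover each $v_{i+1}$ is either $\gamma_n$ or a nontrivial commutator, hence lies in $[G(K),G(K)]$ when $i+1<n$ and is therefore itself non-central (for torus knots the central elements have nonzero image in $H_1(E(K))=\mathbb{Z}$, so none lie in the commutator subgroup). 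Thus I am reduced to showing that the normal closure of a non-central element is never contained in the centralizer $C(\gamma_i)$ of a non-central peripheral element; otherwise $C(\gamma_i)$ would contain a nontrivial normal subgroup of $G(K)$. This is the main obstacle, and I expect to settle it via the structure of centralizers in $3$-manifold groups: $C(\gamma_i)$ is small (abelian, indeed peripheral, in the hyperbolic and generic cases), whereas $G(K)$ has no nontrivial abelian normal subgroup other than the center of a Seifert piece, which the hypothesis excludes. As $G(K)$ is torsion-free, a single nontrivial element of $I$ already yields infinitely many.

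For the finite generation statement, the direction where all $r_i$ are finite surgery slopes is immediate: each $\langle\!\langle r_i\rangle\!\rangle=\ker(G(K)\to\pi_1(K(r_i)))$ then has finite index, so $I$ has finite index in the finitely generated group $G(K)$ and is finitely generated. For the converse I would first reduce to the case in which no $r_i$ is a finite surgery slope. Grouping the finite-surgery slopes into $F$ and the remaining ones into $J$, one has $I=J\cap F$ with $[G(K):F]<\infty$, because $G(K)/F$ embeds in the product of the finite groups $\pi_1(K(r_i))$ over the finite-surgery slopes. Hence $[J:I]<\infty$, and $I$ is finitely generated if and only if $J$ is.

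It then remains to show that when all $r_i$ are non-finite-surgery slopes the normal subgroup $J=\bigcap\langle\!\langle r_i\rangle\!\rangle$ is infinitely generated. Now $J$ has infinite index, and $G(K)/J$ is infinite since it surjects onto each infinite $\pi_1(K(r_i))$; so if $J$ were finitely generated, the theory of finitely generated normal subgroups of $3$-manifold groups (Stallings' fibration theorem, together with Jaco--Shalen) would force one of two possibilities: either $E(K)$ fibers over the circle with $J$ the fiber group and $G(K)/J\cong\mathbb{Z}$, or $J\cong\mathbb{Z}$. Both are ruled out by the torus exclusion. In the first case the essentially unique surjection $G(K)\to\mathbb{Z}$ identifies $J$ with $[G(K),G(K)]$, whence some $\pi_1(K(r_i))$ is an infinite abelian quotient of $G(K)$, forcing $r_i=0$ and $K(0)$ to have cyclic fundamental group, impossible for a nontrivial knot. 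In the second case a normal infinite cyclic subgroup of a $3$-manifold group is central, so $K$ is a torus knot and $J$ is its center, generated by the $pq$-slope element; but that element remains of infinite order under every non-$pq$ filling, contradicting $J\subseteq\langle\!\langle r_i\rangle\!\rangle$. The reduction together with these exclusions completes the converse, and the argument reuses precisely the structural input behind Theorem~\ref{finitely generated}.
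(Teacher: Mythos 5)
Your overall architecture matches the paper's quite closely: the nontriviality is proved by the same iterated-commutator scheme (the paper inductively sets $g_m=[r_m,\,a_mg_{m-1}a_m^{-1}]$, with the same homological argument you give — commutators die in $H_1(E(K))\cong\mathbb{Z}$ while central elements of torus knot groups, being powers of $\mu^{pq}\lambda$, do not — to keep each stage non-central, Claim~\ref{non-central}), and your finite-generation equivalence rests on the same structural input (Proposition~\ref{finitely_generated_normal}). Your converse is routed a little differently: you reduce modulo the finite-index subgroup $F$ and apply the Hempel--Jaco classification directly to the normal subgroup $J$, whereas the paper invokes Scott's theorem inside the cover corresponding to $\langle\!\langle r_1\rangle\!\rangle$ (Proposition~\ref{intersection_finitely_generated}); both branches of your dichotomy close as you indicate, though your ``first case'' should allow $J$ of finite index in the fiber group (so $G(K)/J$ is only virtually $\mathbb{Z}$, a routine repair), and your assertion that the central element ``remains of infinite order under every non-$pq$ filling'' is exactly Proposition~\ref{reducing}(ii), which does require the asphericity/torsion-freeness argument the paper supplies.

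The genuine gap is in the crux you yourself flag: given a non-central $u$ and a slope element $\gamma_i$ with $r_i\ne pq$, produce $g_i$ with $g_iug_i^{-1}\notin Z(\gamma_i)$. This is the paper's Lemma~\ref{centralizer}, and your sketch of it fails in precisely the case on which the paper spends most of its effort. You claim $Z(\gamma_i)$ is abelian ``in the hyperbolic and generic cases'' and that the non-abelian possibility is ``excluded by the hypothesis.'' It is not: the hypothesis only excludes the slope $pq$ when $K$ itself is a torus knot. If $K$ is the $(a,b)$-cable of a nontrivial knot, the theorem permits $r_i=ab$, the cabling slope, whose slope element is a regular fiber of the Seifert-fibered JSJ piece $M$ adjacent to $\partial E(K)$; then $Z(\gamma_i)$ contains the non-abelian group $\pi_1(M)$, and your reduction to ``$G(K)$ has no nontrivial abelian normal subgroup'' does not apply. (For composite knots the dangerous slope is the meridian, i.e.\ $r=\infty$, which is harmless since $\langle\!\langle\infty\rangle\!\rangle=G(K)$ — but the cable case is squarely inside the hypotheses.) The paper handles it by writing $G(K)=G_1\ast_H G_2$ along a JSJ torus and proving Claim~\ref{torus knot} via normal-form length arguments in the amalgamated product: either some $G_1$-conjugate of $w$ leaves $H$, whence a $G_2$-conjugation pushes $w$ out of $G_1\supset Z(\gamma_i)$, or every $G_1$-conjugate of $w$ lies in $H$, in which case $w$ is shown to be central in all of $G(K)$, contradicting non-centrality. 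An alternative you could substitute is the Bass--Serre observation that a normal subgroup of $G(K)$ contained in a vertex group lies in the kernel of the action on the tree, hence in $\bigcap_x xHx^{-1}\subset H\cong\mathbb{Z}^2$, returning you to the abelian case — but some such argument must be supplied; as written, the inductive step of your nontriviality proof is unjustified exactly for cable knots at the cabling slope.
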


\begin{remark}
\label{pq_finiteness}
\begin{itemize}
\item[(i)]
If $K = T_{p, q}$ and $pq \in \{ r_1, \dots, r_n \}$, 
then generically $\langle\!\langle r_1\rangle\!\rangle \cap \cdots \cap \langle\!\langle r_n\rangle\!\rangle = \{ 1 \}$; 
see Proposition~\ref{reducing}. 
\item[(ii)]
As the result below shows, the finiteness of a family of slopes in Theorem~\ref{thm:finite_family} is essential. 
\end{itemize}
\end{remark}

\begin{theorem}[\cite{IMT}]
\label{hyperbolic knot}
Let $K$ be a hyperbolic knot in $S^3$. 
Then 
$\langle\!\langle r_1\rangle\!\rangle \cap \langle\!\langle r_2 \rangle\!\rangle \cap \cdots = \{ 1 \}$ 
for any infinite family of slopes $\{ r_1, r_2, \dots \}$. 
\end{theorem}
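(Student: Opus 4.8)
The plan is to use the geometry of hyperbolic Dehn filling together with the description $\langle\!\langle r\rangle\!\rangle = \{ g \in G(K) \mid g \text{ is trivial in } \pi_1(K(r))\}$ recorded in the introduction. An element $g$ lying in the infinite intersection is killed by infinitely many fillings, and I would show that no nontrivial element can survive this by tracking $g$ through the holonomy representations of the filled manifolds and passing to the complete structure.

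Concretely, let $\rho_\infty : G(K) \to \mathrm{PSL}(2,\mathbb{C})$ be the discrete faithful holonomy representation of the complete hyperbolic structure on the interior of $E(K)$. Since there are only finitely many slopes of bounded norm, along an infinite family the coprime pairs $(m_i, n_i)$ with $r_i = m_i/n_i$ satisfy $m_i^2 + n_i^2 \to \infty$; and since there are only finitely many exceptional (non-hyperbolic) slopes, after discarding finitely many indices I may assume every $K(r_i)$ is hyperbolic. By Thurston's hyperbolic Dehn surgery theorem, the holonomy representations of $K(r_i)$, precomposed with the quotients $G(K) \twoheadrightarrow G(K)/\langle\!\langle r_i\rangle\!\rangle = \pi_1(K(r_i))$ to give representations $\bar\rho_i : G(K) \to \mathrm{PSL}(2,\mathbb{C})$, converge to $\rho_\infty$ as $i \to \infty$.

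Now suppose $g$ lies in $\langle\!\langle r_i\rangle\!\rangle$ for all $i$. Then $g$ is trivial in each $\pi_1(K(r_i))$, so $\bar\rho_i(g) = I$ for every $i$ in the subsequence above. Since evaluation of a representation at the fixed word $g$ is continuous on the representation variety, letting $i \to \infty$ yields $\rho_\infty(g) = I$. As $\rho_\infty$ is faithful, $g = 1$, and therefore the infinite intersection is trivial.

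The main obstacle is making the convergence precise enough to pass from $\bar\rho_i(g) = I$ to $\rho_\infty(g) = I$, and two points need care here. First, holonomy representations are canonically defined only up to conjugacy, so I would fix normalizations (pinning down the images of a fixed generating set, or working in the character variety) so that one has genuine convergence of representations; because the condition $\bar\rho_i(g)=I$ is conjugation invariant this causes no real difficulty, but it should be stated carefully. Second, one must invoke the algebraic convergence of holonomies on generators furnished by Thurston's theorem, not merely geometric convergence of the underlying manifolds; once this is in hand, continuity of word evaluation closes the argument. The hypothesis that the family is infinite enters exactly to force the filling coefficients to infinity, which is what drives $\bar\rho_i \to \rho_\infty$; for a finite family Theorem~\ref{thm:finite_family} shows the intersection is in fact nontrivial, so infiniteness is indispensable.
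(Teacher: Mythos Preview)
The paper does not prove this theorem; it is quoted from the external reference \cite{IMT} (Ichihara--Motegi--Teragaito, ``Vanishing elements in a knot group by Dehn fillings''), so there is no in-paper argument to compare against.

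Your proposed proof is correct and is in fact the standard route to this result. The logic is sound at every step: an infinite family of distinct slopes forces the filling coefficients to leave every compact set; after discarding the finitely many exceptional slopes (which only enlarges the intersection you are trying to show is trivial), Thurston's hyperbolic Dehn surgery theorem supplies hyperbolic structures on the $K(r_i)$ whose holonomies, pulled back to $G(K)$, converge algebraically to the faithful representation $\rho_\infty$; and then $\bar\rho_i(g)=I$ for all $i$ passes to the limit by continuity of word evaluation, giving $\rho_\infty(g)=I$ and hence $g=1$. Your remarks about fixing normalizations (or working with characters, since triviality is conjugation-invariant) and about needing algebraic rather than merely geometric convergence are exactly the points that require care, and you have identified them correctly.
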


Combining Theorems~\ref{thm:finite_family} and \ref{hyperbolic knot} we have: 

\begin{corollary}
\label{infinite_finite}
Let $K$ be a hyperbolic knot in $S^3$. 
For any infinite family of slopes, 
their normal closures intersect trivially, 
while for any finite subfamily, 
the intersection of their normal closures contains infinitely many elements. 
\end{corollary}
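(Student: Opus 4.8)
The plan is to assemble the corollary directly from the two preceding theorems together with the fact that knot groups are torsion-free. The assertion that an infinite family of slopes has trivially intersecting normal closures is literally the content of Theorem~\ref{hyperbolic knot}, so that half of the statement requires no further argument.

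For the finite case, I would first observe that a hyperbolic knot is never a torus knot: the exterior of a torus knot is Seifert fibered and hence admits no complete hyperbolic structure. Consequently the exceptional hypothesis ``$pq \in \{ r_1, \dots, r_n \}$'' in Theorem~\ref{thm:finite_family} can never be triggered, and that theorem therefore applies unconditionally to any finite family $\{ r_1, \dots, r_n \}$ $(n \geq 2)$ of slopes of $K$, yielding that $\langle\!\langle r_1 \rangle\!\rangle \cap \cdots \cap \langle\!\langle r_n \rangle\!\rangle$ is nontrivial. (The case $n = 1$ is immediate, since $\langle\!\langle r_1 \rangle\!\rangle$ already contains a slope element, which is nontrivial because $P(K)$ injects into $G(K)$.)

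It then remains to promote nontriviality to infinitude. Here I would invoke that $G(K)$ is torsion-free: the exterior $E(K)$ is an aspherical $3$--manifold, i.e.\ a $K(\pi,1)$, and the fundamental group of a finite-dimensional aspherical space has no torsion. Hence any nontrivial element $g$ lying in the intersection has infinite order, and since the intersection is a subgroup it contains every power $g^k$ $(k \in \mathbb{Z})$, all of which are distinct. Thus the intersection of the normal closures contains infinitely many elements.

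I expect no genuine obstacle here, as the corollary is essentially a packaging of Theorems~\ref{thm:finite_family} and~\ref{hyperbolic knot}; the only supplementary ingredients are the elementary facts that hyperbolic knots are not torus knots and that knot groups are torsion-free.
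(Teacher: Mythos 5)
Your proof is correct and follows essentially the same route as the paper, which presents the corollary as an immediate combination of Theorems~\ref{thm:finite_family} and~\ref{hyperbolic knot} without further argument. You rightly fill in the two details the paper leaves implicit---that a hyperbolic knot is not a torus knot, so the cabling-slope exception never occurs, and that $G(K)$ is torsion-free (by asphericity of $E(K)$), so a nontrivial intersection contains the infinitely many distinct powers of any nontrivial element.
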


This has the following interpretation from a viewpoint of Dehn fillings. 

\begin{corollary}
\label{infinite_finite_geometric}
Let $K$ be a hyperbolic knot in $S^3$. 
Then for any nontrivial element $g \in G(K)$, 
there are only finitely many Dehn fillings of $E(K)$ which trivialize $g$, 
while for any finitely many Dehn fillings, 
there are infinitely many nontrivial elements which become trivial by each of these Dehn fillings. 
\end{corollary}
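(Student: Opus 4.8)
The plan is to recognize that this corollary is simply the geometric translation of Corollary~\ref{infinite_finite}, so that the proof consists of unwinding the dictionary between Dehn fillings and normal closures of slopes set up in the introduction. Recall that every Dehn filling of $E(K)$ is the $r$--Dehn filling for a unique slope $r \in \mathbb{Q}\cup\{\infty\}$, and that the short exact sequence $\{1\} \to \langle\!\langle r\rangle\!\rangle \to G(K) \to \pi_1(K(r)) \to \{1\}$ identifies $\langle\!\langle r\rangle\!\rangle$ with the set of $g \in G(K)$ that become trivial in $\pi_1(K(r))$. Hence ``$g$ is trivialized by the $r$--filling'' is synonymous with ``$g \in \langle\!\langle r\rangle\!\rangle$,'' and I would record this equivalence at the very start.

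For the first assertion, I would fix a nontrivial $g \in G(K)$ and consider the set $R(g) = \{\, r : g \in \langle\!\langle r\rangle\!\rangle \,\}$ of slopes whose fillings trivialize $g$; the goal is to show $R(g)$ is finite. Suppose not. Since each Dehn filling corresponds to a distinct slope, $R(g)$ then contains an infinite family of distinct slopes $\{r_1, r_2, \dots\}$, and as $g$ lies in every $\langle\!\langle r_i\rangle\!\rangle$ it lies in the intersection $\langle\!\langle r_1\rangle\!\rangle \cap \langle\!\langle r_2\rangle\!\rangle \cap \cdots$. By the first half of Corollary~\ref{infinite_finite} (equivalently Theorem~\ref{hyperbolic knot}) this intersection is trivial, forcing $g = 1$, a contradiction. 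Thus $R(g)$ is finite, i.e.\ only finitely many Dehn fillings trivialize $g$.

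For the second assertion, I would fix finitely many Dehn fillings, corresponding to slopes $r_1, \dots, r_n$, which we may take to be distinct. An element $g$ is trivialized by each of these fillings precisely when $g \in \langle\!\langle r_i\rangle\!\rangle$ for every $i$, that is, when $g$ lies in $\langle\!\langle r_1\rangle\!\rangle \cap \cdots \cap \langle\!\langle r_n\rangle\!\rangle$. By the second half of Corollary~\ref{infinite_finite} this intersection contains infinitely many elements; discarding the identity leaves infinitely many nontrivial elements, each killed by all $n$ of the given fillings, which is exactly the desired statement.

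I do not expect any genuine obstacle here, since the corollary carries no content beyond Corollary~\ref{infinite_finite}; the only points demanding care are bookkeeping. I would make sure the passage between geometric Dehn fillings and slopes is invoked cleanly, including the trivial slope $\infty$, for which $\langle\!\langle \infty\rangle\!\rangle = G(K)$ by Property P, so that such a filling trivializes every element and leaves any intersection unchanged; and I would note that ``infinitely many elements'' in Corollary~\ref{infinite_finite} is downgraded to ``infinitely many nontrivial elements'' merely by removing one element. All the substantive mathematics lives upstream, in Theorems~\ref{thm:finite_family} and \ref{hyperbolic knot}.
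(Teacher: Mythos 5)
Your proposal is correct and matches the paper's intent exactly: the paper offers no separate argument for this corollary, deriving it directly from Corollary~\ref{infinite_finite} via the dictionary $\langle\!\langle r\rangle\!\rangle = \{g \in G(K) : g \text{ trivial in } \pi_1(K(r))\}$ given by the short exact sequence, which is precisely your route. Your added bookkeeping (the slope $\infty$, discarding the identity element) is sound and consistent with the paper's framework.
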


\begin{remark}
\begin{itemize}
\item[(i)]
Let $K$ be a nontrivial torus knot $T_{p, q}$.  
Then 
$\displaystyle\cap_{n \in \mathbb{Z}}\langle\!\langle (pqn \pm 1)/n \rangle\!\rangle = [G(K), G(K)]$, 
which is the free group of rank $(p-1)(q-1)/2$. See Proposition~\ref{cyclic_commutator}. 

\item[(ii)]
For satellite knots, it is still open, but we expect $\langle\!\langle r_1\rangle\!\rangle \cap \langle\!\langle r_2 \rangle\!\rangle \cap \cdots = \{ 1 \}$ for any infinite family of slopes $\{ r_1, r_2, \dots \}$.  
See \cite{IMT_g-torison_filling} for further discussion. 
\end{itemize}
\end{remark}

Let us turn to inclusion relations among normal closures of slopes. 
Since $\langle\!\langle \infty \rangle\!\rangle = G(K)$, 
$\langle\!\langle r \rangle\!\rangle \subset \langle\!\langle \infty \rangle\!\rangle$ for any slope $r$. 

\begin{theorem}
\label{thm:no_infinite_chain}
Let $K$ be a non-torus knot in $S^3$. 
If $\langle\!\langle r_1 \rangle\!\rangle \supset \cdots \supset \langle\!\langle r_n \rangle\!\rangle$
for mutually distinct slopes $r_1,r_2,\dots, r_n \in \mathbb{Q}$, 
then $n \le 2$.  
In particular, there is no infinite descending chain nor ascending chain of normal closures of slopes. 
\end{theorem}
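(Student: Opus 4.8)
The plan is to translate the inclusion relation into a condition on the core of the filling and then reduce the whole statement to an \emph{antichain} property. Recall from the earlier discussion that for distinct slopes $r,r'$ one has $\langle\!\langle r'\rangle\!\rangle \subseteq \langle\!\langle r\rangle\!\rangle$ exactly when the slope element $\gamma'$ dies in $\pi_1(K(r))$; since the image of $P(K)$ in $\pi_1(K(r))$ is the cyclic group generated by the core $t_r$ of the filling solid torus, and $\gamma'$ maps to $t_r^{\pm\Delta(r,r')}$, this is equivalent to $o(r)\mid\Delta(r,r')$, where $o(r)$ is the order of $t_r$ in $\pi_1(K(r))$. Property P forces $o(r)\ge 2$ for every finite slope, since $t_r=1$ would give $\langle\!\langle r\rangle\!\rangle=\langle\!\langle\infty\rangle\!\rangle=G(K)$. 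First I would note that in a chain $\langle\!\langle r_1\rangle\!\rangle\supsetneq\cdots\supsetneq\langle\!\langle r_n\rangle\!\rangle$ (the inclusions are strict by Theorem~\ref{thm:peripheral Magnus property}), each of $r_1,\dots,r_{n-1}$ must satisfy $o(r_i)<\infty$, for otherwise $t_{r_i}^{\Delta(r_i,r_{i+1})}=1$ would force $r_i=r_{i+1}$. Setting $\mathcal{G}=\{r\in\mathbb{Q}: o(r)<\infty\}$, the top $n-1$ slopes thus lie in $\mathcal{G}$ and are pairwise comparable for the order $r'\preceq r\iff o(r)\mid\Delta(r,r')$; conversely, two comparable members of $\mathcal{G}$ yield a length-$3$ chain, since $o(r')<\infty$ allows infinitely many slopes below $r'$. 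Hence the theorem is equivalent to the assertion that $\mathcal{G}$ is an antichain.

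Second, I would interpret membership in $\mathcal{G}$ geometrically. If $o(r)<\infty$ then $\pi_1(K(r))$ contains the nontrivial torsion element $t_r$; by the prime decomposition and the Kurosh subgroup theorem together with geometrization (an irreducible closed orientable $3$-manifold with infinite fundamental group is aspherical, hence torsion-free), this torsion must sit inside a spherical summand. Therefore $r\in\mathcal{G}$ forces $K(r)$ to be either a spherical space form, with finite $\pi_1$, or reducible with a nontrivial spherical summand. Moreover the core generates $H_1(K(r))\cong\mathbb{Z}/p_r$, where $p_r$ is the numerator of $r$, so $o(r)\ge p_r$; projecting $t_r^{\Delta(r,r')}=1$ to homology also gives $p_r\mid\Delta(r,r')$ whenever $r'\preceq r$.

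Third comes the heart of the matter: ruling out two comparable good slopes. Suppose $r_1,r_2\in\mathcal{G}$ with $r_2\preceq r_1$ and $r_1\ne r_2$; then $K(r_1),K(r_2)$ are both exceptional and $\Delta(r_1,r_2)\ge o(r_1)\ge 2$. For hyperbolic $K$ I would invoke the distance bounds for such fillings---the Cyclic Surgery Theorem of Culler--Gordon--Luecke--Shalen (cyclic slopes are at distance $1$), the Boyer--Zhang finite-surgery bounds, and the integrality and distance results of Gordon--Luecke for reducible slopes---to force $\Delta(r_1,r_2)$, hence $o(r_1)$ and the numerator $p_{r_1}$, to be very small. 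The surjection $\pi_1(K(r_2))\twoheadrightarrow\pi_1(K(r_1))$ induced by $\langle\!\langle r_2\rangle\!\rangle\subseteq\langle\!\langle r_1\rangle\!\rangle$, under which the core of the $r_2$-filling is killed, then supplies the extra leverage needed to eliminate the surviving low-order configurations. For satellite (non-torus, non-hyperbolic) knots I would instead use the structure of fillings along the companion torus, together with the classification of finite, cyclic, and reducible surgeries on satellite knots, to show that at most one good slope exists, so no comparable pair can occur.

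The hard part will be precisely this third step: the surgery-theoretic distance theorems only bound $\Delta(r_1,r_2)$ by a constant (at most $3$ in the finite case), which combined with $o(r_1)\mid\Delta(r_1,r_2)$ and $o(r_1)\ge 2$ still leaves the delicate possibilities $o(r_1)\in\{2,3\}$ and $\Delta(r_1,r_2)\in\{2,3\}$---exactly the small spherical-space-form and reducible fillings---to be excluded individually, and the reducible and satellite cases genuinely require the full surgery classification rather than a soft argument.
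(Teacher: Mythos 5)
Your first two steps are sound and in fact parallel the paper's setup: the equivalence $\langle\!\langle r'\rangle\!\rangle \subseteq \langle\!\langle r\rangle\!\rangle \iff o(r)\mid \Delta(r,r')$ is the paper's Proposition~\ref{inclusion} in disguise (the paper extracts $\mu^{\pm\Delta(r,r')}\in\langle\!\langle r\rangle\!\rangle$ and applies Lemma~\ref{power of meridian}), and your homological observation $p_r\mid\Delta(r,r')$ recovers the divisibility $m\mid m'$ that the paper gets from the epimorphism $H_1(K(r'))\to H_1(K(r))$. But your third step is where the entire content of the theorem lives, and you have not proved it --- you have only listed machinery and conceded that the surviving configurations ``need to be excluded individually.'' Two concrete problems. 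First, you keep the reducible case alive and hope to kill it with Gordon--Luecke-type distance bounds; no clean bound of that kind closes a reducible-versus-finite pair, and the paper instead disposes of reducible fillings by a \emph{soft} argument you missed (Lemma~\ref{non_reducing}): if the core has finite order then $\pi_1(K(r))=A\ast B$ would be normally generated by a torsion element, which by Kurosh is conjugate into one factor, so killing its normal closure leaves $B\neq\{1\}$ rather than the trivial group --- contradiction. Ironically, your closing remark that the reducible case ``genuinely requires the full surgery classification rather than a soft argument'' is exactly backwards.

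Second, for two comparable finite surgery slopes your plan stops at ``the surjection supplies the extra leverage needed to eliminate the surviving low-order configurations,'' which is precisely the gap. The paper's Proposition~\ref{non_finite_finite} closes it by a specific chain of deductions you would need to supply: divisibility $m\mid m'$ combined with the Ni--Zhang bound $|mn'-nm'|\le 2$ forces $m\le 2$, hence $|m/n|\le 2$; a finite surgery is an L-space surgery, so $2g(K)-1\le |m/n|\le 2$ gives $g(K)=1$; L-space knots are fibered, so $K$ is a trefoil or the figure-eight knot; non-torus leaves the figure-eight, which has no nontrivial finite surgery. For satellites, Boyer--Zhang reduces to cables of torus knots and the Bleiler--Hodgson table shows the numerator divisibility fails. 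Your cited tools (CGLS cyclic surgery theorem, generic finite-surgery distance bounds) alone leave exactly the cases $o(r_1)\in\{2,3\}$, $\Delta\in\{2,3\}$ open, as you yourself admit; without the divisibility-plus-genus-bound argument (or an equivalent), the proof does not go through.
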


On the contrary, for torus knots we have: 

\begin{theorem}
\label{thm:descending_sequence}
Let $K$ be a torus knot $T_{p,q}\ (p > q\ge 2)$. 
\begin{itemize}
\item[(i)]
There is no infinite ascending chain 
$\langle\!\langle r_1 \rangle\!\rangle \subset  \langle\!\langle r_2 \rangle\!\rangle \subset \langle\!\langle r_3 \rangle\!\rangle \subset \cdots$. 

\item[(ii)]
For each finite surgery slope $r$,
there exists an infinite descending chain
\[
\langle\!\langle r \rangle\!\rangle \supset \langle\!\langle r_1 \rangle\!\rangle\supset 
\langle\!\langle r_2 \rangle\!\rangle\supset \cdots.
\]
\end{itemize}
\end{theorem}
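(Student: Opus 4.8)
The plan is to reduce both statements to the structure of the quotients $\pi_1(K(r)) = G(K)/\langle\!\langle r\rangle\!\rangle$. Write $G(K) = \langle x,y \mid x^p = y^q\rangle$ with center $Z = \langle z\rangle$, $z = x^p = y^q$, and choose the meridian--longitude pair so that $\lambda = z\mu^{-pq}$. For a slope $r = a/b$ in lowest terms the slope element then becomes $w_r = \mu^a\lambda^b = z^b\mu^{\Delta}$, where $\Delta = \Delta(r) := a - pqb$; the reducible slope $\langle\!\langle pq\rangle\!\rangle = Z$ corresponds to $\Delta = 0$. By Moser's classification $K(r)$ is Seifert fibered over $S^2(p,q,|\Delta|)$ with $\pi_1(K(r))/\langle\bar z\rangle$ the triangle group $T(p,q,|\Delta|)$, and $r$ is a finite surgery slope exactly when $(p,q,|\Delta|)$ is spherical. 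The workhorse is the following containment criterion: since $z$ is central, $\langle\!\langle r'\rangle\!\rangle \subseteq \langle\!\langle r\rangle\!\rangle$ if and only if $\bar z^{\,b'}\bar\mu^{\Delta'} = 1$ in the abelian subgroup $A_r := \langle\bar z,\bar\mu\rangle \le \pi_1(K(r))$, that is, if and only if $(b',\Delta')$ lies in the kernel $L_r$ of the homomorphism $\mathbb{Z}^2 \to A_r$, $(s,t)\mapsto \bar z^{\,s}\bar\mu^{\,t}$.

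For part (i), suppose there were an infinite ascending chain $\langle\!\langle r_1\rangle\!\rangle \subsetneq \langle\!\langle r_2\rangle\!\rangle \subsetneq \cdots$. Abelianizing the induced surjections $\pi_1(K(r_i)) \twoheadrightarrow \pi_1(K(r_{i+1}))$ gives $\mathbb{Z}/|a_i| \twoheadrightarrow \mathbb{Z}/|a_{i+1}|$, so $|a_{i+1}|$ divides $|a_i|$ and the sequence $|a_i|$ stabilizes; after discarding finitely many terms I may assume $|a_i| = a_0$ is constant, the slopes $r_i$ remaining pairwise distinct by Theorem~\ref{thm:peripheral Magnus property}. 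Among slopes with $|a| = a_0$ only finitely many have $|\Delta|$ small enough for $(p,q,|\Delta|)$ to be spherical or Euclidean, so infinitely many $r_i$ have hyperbolic base orbifold. For such $r = r_j$ the manifold $K(r_j)$ is closed aspherical, hence $\pi_1(K(r_j))$ is torsion-free with center $\langle\bar z\rangle \cong \mathbb{Z}$; since the image of $\bar\mu$ in $T(p,q,|\Delta_j|)$ has finite order, a power of $\bar\mu$ lies in $\langle\bar z\rangle$, so $A_{r_j}$ is torsion-free of rank one, giving $A_{r_j}\cong\mathbb{Z}$ and $L_{r_j}$ a saturated rank-one sublattice of $\mathbb{Z}^2$. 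Because $\gcd(b_j,\Delta_j) = \gcd(b_j,a_j) = 1$, the vector $(b_j,\Delta_j)$ is primitive and generates $L_{r_j}$. Applying the criterion to $\langle\!\langle r_i\rangle\!\rangle \subseteq \langle\!\langle r_j\rangle\!\rangle$ for an earlier $i<j$, and using that $(b_i,\Delta_i)$ is again primitive, forces $(b_i,\Delta_i) = \pm(b_j,\Delta_j)$; both signs give the same slope, so $r_i = r_j$, contradicting distinctness. Hence the ascending chain terminates, which is (i).

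For part (ii), fix a finite surgery slope $r = a/b$, so $F := \pi_1(K(r))$ is finite and $\bar z$ has finite order. I build the chain recursively, keeping every $\Delta_i$ equal to the fixed value $\Delta = \Delta(r)$, so that each $r_i$ has the same spherical triple $(p,q,|\Delta|)$ and is again a finite surgery slope. Given $r_{i-1} = a_{i-1}/b_{i-1}$, let $\zeta_{i-1}$ be the order of $\bar z$ in $\pi_1(K(r_{i-1}))$ and set $b_i = b_{i-1} + m_i\zeta_{i-1}$, $a_i = a_{i-1} + m_i\,pq\,\zeta_{i-1}$, so $\Delta_i = \Delta_{i-1}$. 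In $\pi_1(K(r_{i-1}))$ one then computes $\overline{w_{r_i}} = \bar z^{\,m_i\zeta_{i-1}}\,\overline{w_{r_{i-1}}} = (\bar z^{\,\zeta_{i-1}})^{m_i}\cdot 1 = 1$, whence $\langle\!\langle r_i\rangle\!\rangle \subseteq \langle\!\langle r_{i-1}\rangle\!\rangle$. A routine congruence argument using $\gcd(\Delta,b_{i-1}) = 1$ lets me choose $m_i$ so that $\gcd(a_i,b_i) = 1$, making $r_i$ a genuine slope, and simultaneously so that $|a_i| > |a_{i-1}|$; then $r_i \neq r_{i-1}$, and Theorem~\ref{thm:peripheral Magnus property} upgrades the inclusion to $\langle\!\langle r_i\rangle\!\rangle \subsetneq \langle\!\langle r_{i-1}\rangle\!\rangle$. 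Iterating yields the required infinite descending chain; note the hypothesis ``finite surgery slope'' enters precisely in ensuring $\bar z$ has finite order, and the permitted growth $|a_{i-1}| \mid |a_i|$ is exactly the divisibility direction forbidden in the ascending case.

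The main obstacle is the structural input powering the criterion in part (i): pinning down $A_r$ as $\mathbb{Z}$ with $L_r = \langle(b,\Delta)\rangle$. This relies on the fact that, for hyperbolic base orbifold, $\pi_1(K(r))$ is torsion-free with infinite-cyclic center equal to the Seifert fiber, and on the bookkeeping that isolates the finitely many non-hyperbolic slopes of each fixed $|a|$ so that a hyperbolic-base slope is guaranteed to appear in the tail of the chain. By contrast the recursion in (ii) is elementary once the identity $\overline{w_{r_i}} = \bar z^{\,m_i\zeta_{i-1}}\,\overline{w_{r_{i-1}}}$ is observed; the only delicate point there is the number-theoretic choice of $m_i$ securing both coprimality and growth.
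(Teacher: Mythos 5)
Your proposal is correct, and part (i) takes a genuinely different route from the paper's, while part (ii) is the paper's mechanism in unified form. The paper proves (i) (Theorem~\ref{no_ascending}) in a few lines: Proposition~\ref{inclusion} forces every slope in an ascending chain to be a finite surgery slope, the same homological stabilization you use fixes the numerator $m$, and the torus-knot finite-surgery constraint $|pqn_i-m|\in\{1,2,3,4,5\}$ then leaves only finitely many denominators. You avoid Proposition~\ref{inclusion} entirely: after stabilizing $|a_i|$ you locate a slope $r_j$ in the tail with hyperbolic base orbifold, use asphericity and torsion-freeness to identify $A_{r_j}=\langle \bar z,\bar\mu\rangle\cong\mathbb{Z}$ with saturated kernel lattice generated by the primitive vector $(b_j,\Delta_j)$, and deduce the stronger rigidity statement that \emph{no} other slope's closure can be contained in $\langle\!\langle r_j\rangle\!\rangle$ — more Seifert-fibered machinery, but self-contained and more informative. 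For (ii), your recursion (kill the new slope element in the finite quotient via torsion while keeping $|\Delta|$ fixed so finiteness persists) is exactly the idea of Theorem~\ref{infinite_descending_sequence}; using the order of the central fiber $\bar z$ instead of the meridian lets you merge the paper's cyclic and non-cyclic cases, and your congruence choice of $m_i$ explicitly secures $\gcd(a_i,b_i)=1$, a point the paper's non-cyclic recursion $r_1=(m(f+1)+df)/(n(f+1))$ leaves unchecked (there $\gcd(m_1,n_1)=\gcd(d,f+1)$, which is not obviously $1$). One cosmetic remark: in (i) your appeal to Theorem~\ref{thm:peripheral Magnus property} for distinctness of the $r_i$ is unnecessary — strict inclusions already give distinct closures, hence distinct slopes; the Magnus property (or simply your growth $|a_i|>|a_{i-1}|$ via homology) is genuinely needed only in (ii) to upgrade $\subseteq$ to $\subsetneq$.
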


\section{Inclusions between two normal closures of slopes}
\label{inclusions}

In this section we study inclusions between two normal closures of slopes. 
We say that a slope $r$ is a  \textit{reducing surgery slope} if $K(r)$ is a reducible 3-manifold.

\begin{lemma}
\label{power of meridian}
Let $K$ be a nontrivial knot in $S^3$ with meridian $\mu$. 
If $\mu^a \in \langle\!\langle r \rangle\!\rangle$ for some integer $a \ne 0$,  
then $r$ is a finite surgery slope or a reducing surgery slope.
\end{lemma}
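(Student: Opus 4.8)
The plan is to translate the algebraic hypothesis into a statement about torsion in the fundamental group of the filled manifold, and then to invoke the classical dichotomy governing torsion in closed $3$--manifold groups. Recall from the short exact sequence in the introduction that $\pi_1(K(r)) = G(K)/\langle\!\langle r \rangle\!\rangle$, with $\langle\!\langle r \rangle\!\rangle$ the kernel of the quotient map. Writing $\bar\mu$ for the image of $\mu$ in $\pi_1(K(r))$, the hypothesis $\mu^a \in \langle\!\langle r \rangle\!\rangle$ with $a \ne 0$ says exactly that $\bar\mu^a = 1$, i.e. $\bar\mu$ has finite order dividing $|a|$. Since $K(r)$ is a closed orientable $3$--manifold, I have reduced the lemma to the assertion: if $\pi_1(K(r))$ contains a torsion element, then either $\pi_1(K(r))$ is finite or $K(r)$ is reducible.

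First I would dispose of the degenerate case $\bar\mu = 1$. The meridian $\mu$ normally generates $G(K)$, since gluing the filling solid torus back along $\mu$ recovers $S^3$ and hence $G(K)/\langle\!\langle \mu \rangle\!\rangle = \pi_1(S^3) = \{1\}$. So if $\bar\mu = 1$, then $\pi_1(K(r)) = \{1\}$ is finite and $r$ is (vacuously) a finite surgery slope. Thus I may assume $\bar\mu$ is a nontrivial element of finite order in $\pi_1(K(r))$.

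Next I would run the reducible/irreducible dichotomy. If $K(r)$ is reducible, then $r$ is a reducing surgery slope and we are done, so suppose $K(r)$ is irreducible. Because $K(r)$ is orientable and irreducible, the sphere theorem \cite{Papa} forces $\pi_2(K(r)) = 0$. If $\pi_1(K(r))$ were infinite, the universal cover $\widetilde{K(r)}$ would be a non-compact simply connected $3$--manifold with $\pi_2 = 0$; since $H_3 = 0$ for a non-compact $3$--manifold, the Hurewicz theorem (applied inductively) shows $\widetilde{K(r)}$ is weakly contractible, so $K(r)$ would be aspherical. An aspherical closed manifold has torsion-free fundamental group, for a finite cyclic subgroup $\mathbb{Z}/n$ would produce a $3$--dimensional $K(\mathbb{Z}/n,1)$, which is impossible since $\mathbb{Z}/n$ has nonzero homology in arbitrarily high degrees. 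This contradicts the presence of the nontrivial torsion element $\bar\mu$, so $\pi_1(K(r))$ must in fact be finite, i.e. $r$ is a finite surgery slope.

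The main obstacle, or rather the single substantive input, is the torsion dichotomy for irreducible closed orientable $3$--manifolds used in the last step. I would emphasize that this requires only the sphere theorem and elementary homotopy theory of the universal cover, and not the full geometrization theorem; all the remaining steps are bookkeeping with the defining exact sequence of $\langle\!\langle r \rangle\!\rangle$ and the fact that $\mu$ normally generates $G(K)$.
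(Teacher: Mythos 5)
Your proof is correct and takes essentially the same route as the paper's: translate $\mu^a \in \langle\!\langle r \rangle\!\rangle$ into a torsion element $\bar\mu \in \pi_1(K(r))$, dispose of the degenerate case $\bar\mu = 1$ via normal generation of $G(K)$ by the meridian, and then apply the dichotomy that a closed orientable irreducible $3$--manifold with infinite fundamental group is aspherical and hence has torsion-free fundamental group. The only difference is presentational: the paper cites this dichotomy (\cite[p.48 (C.1)]{AFW} and \cite{Hem}), whereas you unwind it from the sphere theorem and the Hurewicz argument on the universal cover, which is exactly the standard proof behind those citations.
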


\begin{proof}
Without loss of generality we may assume $a > 0$.  
If $a = 1$, 
then $\mu \in \langle\!\langle r \rangle\!\rangle$ and $\pi_1(K(r)) = G(K) / \langle\!\langle r \rangle\!\rangle =\{ 1 \}$, 
and thus $r$ is a finite surgery slope. 
(Actually, Property P \cite{KM} implies $r = \infty$.)  
So in the following we assume $a \ge 2$. 
Since $\mu^a \in \langle\!\langle r \rangle\!\rangle$, 
$\langle\!\langle \mu^a \rangle\!\rangle \subset \langle\!\langle r \rangle\!\rangle$  
and we have the canonical epimorphism 
$\varphi : G(K) / \langle\!\langle \mu^a \rangle\!\rangle \to G(K) / \langle\!\langle r \rangle\!\rangle$. 
Note that $(\varphi(\mu))^a = \varphi(\mu^a) = 1$ in $G(K) / \langle\!\langle r \rangle\!\rangle$. 
If $\varphi(\mu) = 1 \in G(K) / \langle\!\langle r \rangle\!\rangle$,  
then $\varphi(\mu) = \mu \in  \langle\!\langle r \rangle\!\rangle$ and as above $r = \infty$. 
Thus we may assume $\varphi(\mu) \ne 1$, i.e. it is a nontrivial torsion element in $G(K)\slash \langle\!\langle r \rangle\!\rangle$.  
Recall that an irreducible $3$--manifold $M$ with infinite fundamental group is aspherical \cite[p.48 (C.1)]{AFW}
and hence $\pi_1(M)$ has no torsion element \cite{Hem}. 
Hence $\pi_1(K(r))$ is finite or $K(r)$ must be a reducible manifold. 
Accordingly $r$ is a finite surgery slope or a reducing surgery slope. 
\end{proof}

\begin{lemma}
\label{non_reducing}
Let $K$ be a nontrivial knot in $S^3$ with meridian $\mu$, 
and $r$ a nontrivial slope. 
If $\mu^a \in \langle\!\langle r \rangle\!\rangle$ for some integer $a \ne 0$,  
then $r$ is not a reducing surgery slope. 
\end{lemma}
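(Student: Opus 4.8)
The plan is to argue by contradiction at the level of $\pi_1(K(r)) = G(K)/\langle\!\langle r \rangle\!\rangle$: assume $r$ is a nontrivial \emph{reducing} surgery slope with $\mu^a \in \langle\!\langle r \rangle\!\rangle$ for some $a \neq 0$, and derive a contradiction. Write $q \colon G(K) \to \pi_1(K(r))$ for the quotient map and $\bar\mu = q(\mu)$. Two elementary observations drive everything. First, $\bar\mu$ is a torsion element: since $\mu^a \in \langle\!\langle r \rangle\!\rangle$ we have $\bar\mu^{a} = 1$ with $a \neq 0$. Second, and crucially, $\bar\mu$ normally generates $\pi_1(K(r))$. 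Indeed, the trivial filling $K(\infty) = S^3$ gives $\langle\!\langle \mu \rangle\!\rangle = \langle\!\langle \infty \rangle\!\rangle = G(K)$, and applying the surjection $q$ yields $\langle\!\langle \bar\mu \rangle\!\rangle = q(\langle\!\langle \mu \rangle\!\rangle) = q(G(K)) = \pi_1(K(r))$.

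Next I would use reducibility of $K(r)$ to pin down the algebraic shape of $\pi_1(K(r))$. By the prime (Kneser--Milnor) decomposition $K(r) = P_1 \# \cdots \# P_k$ together with van Kampen, $\pi_1(K(r)) \cong \pi_1(P_1) * \cdots * \pi_1(P_k)$. No summand is $S^3$, so each $\pi_1(P_i)$ is nontrivial (a simply connected closed orientable $3$--manifold is $S^3$). Since the only prime reducible orientable $3$--manifold is $S^1 \times S^2$, reducibility forces one of two cases: either $k \ge 2$, in which case $\pi_1(K(r))$ is a nontrivial free product $A * B$ with $A, B \neq 1$; or $k = 1$ with $P_1 = S^1 \times S^2$, in which case $\pi_1(K(r)) \cong \mathbb{Z}$.

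The contradiction then comes from the incompatibility of these structures with being normally generated by a single torsion element. In the free-product case, a finite-order element of $A * B$ is conjugate into one of the factors, say $\bar\mu \in gAg^{-1}$; then $\langle\!\langle \bar\mu \rangle\!\rangle$ lies in the kernel of the retraction $A * B \to B$ that kills $A$, which is a proper subgroup when $B \neq 1$, contradicting $\langle\!\langle \bar\mu \rangle\!\rangle = \pi_1(K(r))$. In the case $\pi_1(K(r)) \cong \mathbb{Z}$, torsion-freeness forces $\bar\mu = 1$, whence $\langle\!\langle \bar\mu \rangle\!\rangle = \{ 1 \} \neq \mathbb{Z}$, again a contradiction. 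Either way the assumption that $r$ is reducing is untenable.

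I anticipate the step requiring the most care is the free-product argument: verifying that $\bar\mu$, a priori known only to be torsion, is conjugate into a single free factor, and that the normal closure of one factor is proper. Both are standard consequences of the structure of free products (finite-order elements are conjugate into factors by a normal-form/length argument, and the normal closure of a factor is the kernel of the projection onto the complementary free product), but I would state them explicitly since the whole argument hinges on them. I note that this approach needs neither the integrality of reducing slopes nor the cabling conjecture; it uses only that the meridian normally generates $G(K)$ and that $\mu$ becomes a torsion element after the $r$--filling. Combined with Lemma~\ref{power of meridian}, this shows that such an $r$ must in fact be a finite surgery slope.
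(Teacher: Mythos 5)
Your proposal is correct and takes essentially the same route as the paper's proof: the image $\bar\mu$ of the meridian is a torsion element that normally generates $\pi_1(K(r))$, reducibility yields a nontrivial free product $A \ast B$ (the $S^1 \times S^2$ case being excluded), a torsion element is conjugate into a free factor, and the projection killing that factor contradicts normal generation. The only cosmetic difference is that you rule out the $S^1 \times S^2$ summand via torsion-freeness of $\mathbb{Z}$ directly, where the paper makes the same torsion observation while also citing Gabai; the core free-product step is identical to the paper's use of \cite[Corollary~4.1.4]{MKS} and the retraction onto the complementary factor.
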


\begin{proof}
Assume to the contrary that $r$ is a reducing surgery slope. 
As in the proof of Lemma~\ref{power of meridian} $G(K) / \langle\!\langle r \rangle\!\rangle$ has a nontrivial torsion element, 
hence $K(r) \ne S^2 \times S^1$ and thus $K(r)$ is a connected sum of two closed $3$--manifolds other than $S^3$. 
(In general, 
the result of a surgery on a nontrivial knot is not $S^2 \times S^1$ \cite{GabaiIII}.)
By the Poincar\'e conjecture, they have nontrivial fundamental groups, 
and $G(K)\slash \langle\!\langle r \rangle\!\rangle = A \ast B$ for some nontrivial groups $A$ and $B$. 

As we have seen in the proof of Lemma \ref{power of meridian}, $\varphi(\mu)$, 
the image of $\mu$ under the canonical epimorphism 
$\varphi : G(K) / \langle\!\langle \mu^a \rangle\!\rangle \to G(K) \slash \langle\!\langle r \rangle\!\rangle$ 
is a nontrivial torsion element in $A \ast B$. 
By \cite[Corollary~4.1.4]{MKS}, 
a nontrivial torsion element in a free product $A \ast B$ is conjugate to a torsion element of $A$ or $B$. 
Thus we may assume that there exists $g \in A\ast B$ such that $g\varphi(\mu)g^{-1} \in A$. 

On the other hand, 
$A \ast B$ is normally generated by $\varphi(\mu)$ since $G(K)$ is normally generated by $\mu$. 
This implies that $A \ast B$ is normally generated by an element $g\varphi(\mu)g^{-1} \in A$. 
In particular, 
the normal closure $\langle\!\langle A \rangle\!\rangle$ of $A$ in $A \ast B$ is equal to $A \ast B$, 
and $(A \ast B) \slash \langle\!\langle A \rangle\!\rangle = \{ 1 \}$. 
However, $(A \ast B) \slash \langle\!\langle A \rangle\!\rangle = B \neq \{1\}$ (\cite[p.194]{MKS}). 
This is a contradiction. 
\end{proof}

Combine Lemmas~\ref{power of meridian} and \ref{non_reducing} to obtain the following result 
which asserts that inclusions among normal closures of slopes are quite limited.

\begin{proposition}
\label{inclusion}
Let $K$ be a nontrivial knot in $S^3$. 
Assume that $\langle\!\langle r \rangle\!\rangle \supset \langle\!\langle r' \rangle\!\rangle$ for distinct slopes $r$ and $r'$. 
Then $r$ is a finite surgery slope.  
\end{proposition}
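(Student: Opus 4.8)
The plan is to reduce everything to the two preceding lemmas by showing that the hypothesis $\langle\!\langle r' \rangle\!\rangle \subset \langle\!\langle r \rangle\!\rangle$ forces a nonzero power of the meridian $\mu$ to lie in $\langle\!\langle r \rangle\!\rangle$. First I would dispose of the trivial slope: if $r = \infty$, then $K(\infty) = S^3$ has trivial (hence finite) fundamental group, so $r$ is a finite surgery slope and there is nothing to prove. Thus I may assume that $r$ is a nontrivial slope, which is precisely the setting required by Lemma~\ref{non_reducing}.

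Fix a standard meridian-longitude pair $(\mu, \lambda)$ and write the slope elements as $\gamma = \mu^m \lambda^n$ and $\gamma' = \mu^{m'}\lambda^{n'}$, where $m/n = r$ and $m'/n' = r'$ with $\gcd(m,n) = \gcd(m',n') = 1$. Since $\langle\!\langle r'\rangle\!\rangle \subset \langle\!\langle r\rangle\!\rangle$, both $\gamma$ and $\gamma'$ lie in $\langle\!\langle r\rangle\!\rangle$, so their images become trivial in $\pi_1(K(r)) = G(K)/\langle\!\langle r\rangle\!\rangle$. Writing images with bars, the elements $\bar\mu$ and $\bar\lambda$ commute (they are images of the commuting peripheral elements $\mu, \lambda$), and inside the abelian subgroup $\langle \bar\mu, \bar\lambda\rangle$ I have the two relations $\bar\mu^m \bar\lambda^n = 1$ and $\bar\mu^{m'}\bar\lambda^{n'} = 1$. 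Eliminating $\bar\lambda$ by forming $\bar\gamma^{\,n'}(\bar\gamma')^{-n}$ yields $\bar\mu^{\Delta} = 1$, where $\Delta = mn' - m'n$. Because $r \ne r'$, the two reduced fractions are distinct, so $\Delta \ne 0$; hence $\mu^{\Delta} \in \langle\!\langle r\rangle\!\rangle$ with $\Delta \ne 0$.

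Now I invoke the lemmas. By Lemma~\ref{power of meridian}, since a nonzero power of $\mu$ lies in $\langle\!\langle r\rangle\!\rangle$, the slope $r$ is either a finite surgery slope or a reducing surgery slope. But $r$ is nontrivial, so Lemma~\ref{non_reducing} excludes the reducing case. Therefore $r$ is a finite surgery slope, as claimed.

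The argument is essentially mechanical once the two lemmas are available, so I do not anticipate a genuine obstacle here. The one step requiring any thought is the extraction of the pure meridian power $\mu^{\Delta}$: the point is that the peripheral subgroup is $\mathbb{Z} \oplus \mathbb{Z}$ and the distance $\Delta$ between the distinct slopes $r$ and $r'$ is nonzero, so the two vanishing relations in the abelian group $\langle\bar\mu,\bar\lambda\rangle$ combine to kill a nonzero power of $\bar\mu$. The only bookkeeping subtlety is to treat the case $r = \infty$ separately at the outset, since Lemma~\ref{non_reducing} is stated only for nontrivial slopes.
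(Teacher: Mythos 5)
Your proof is correct and follows essentially the same route as the paper: both extract the pure meridian power $\mu^{mn'-m'n}\in\langle\!\langle r\rangle\!\rangle$ (the paper computes $(\mu^m\lambda^n)^{-n'}(\mu^{m'}\lambda^{n'})^{n}$ directly in the commuting peripheral subgroup, while you equivalently eliminate $\bar\lambda$ in the quotient $\pi_1(K(r))$), then apply Lemma~\ref{power of meridian} and rule out the reducing case by Lemma~\ref{non_reducing}. Your explicit handling of $r=\infty$ and of the nontriviality hypothesis in Lemma~\ref{non_reducing} matches the paper's opening reduction.
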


\begin{proof}
If $r=\infty$, there is nothing to prove.  
Hence we assume $r \ne \infty$.
Write $r = m/n$ and $r' = m'/n'$. 
By the assumption $\mu^{m'} \lambda^{n'} \in \langle\!\langle m/n \rangle\!\rangle$, and hence 
\[\mu^{-mn' + nm'} = (\mu^m \lambda^n)^{-n'} (\mu^{m'} \lambda^{n'})^{n} \in \langle\!\langle m/n \rangle\!\rangle.\]
Since $r$ and $r'$ are distinct slopes, $-mn' + nm' \ne 0$. 
Then Lemma~\ref{power of meridian} shows that $r$ is a finite surgery slope or a reducing surgery slope. 
However, the latter case cannot happen by Lemma~\ref{non_reducing}.  
\end{proof}

\begin{proposition}
\label{non_finite_finite}
Let $K$ be a non-torus knot in $S^3$. 
Assume that $\langle\!\langle r \rangle\!\rangle \supset \langle\!\langle r' \rangle\!\rangle$ 
for distinct nontrivial slopes $r$ and $r'$. 
Then $r'$ is not a finite surgery slope.  
\end{proposition}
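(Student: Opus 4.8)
The plan is to argue by contradiction, leveraging that Proposition~\ref{inclusion} already forces $r$ to be a finite surgery slope. Suppose in addition that $r'$ is a finite surgery slope, and write $r = m/n$, $r' = m'/n'$. Since $r \ne r'$, the peripheral Magnus property (Theorem~\ref{thm:peripheral Magnus property}) makes the inclusion $\langle\!\langle r' \rangle\!\rangle \subsetneq \langle\!\langle r \rangle\!\rangle$ proper, so the induced epimorphism $\pi_1(K(r')) \twoheadrightarrow \pi_1(K(r))$ has nontrivial kernel. Both $K(r)$ and $K(r')$ then have nontrivial finite fundamental groups (nontrivial because no nontrivial surgery on a knot in $S^3$ yields $S^3$), hence both are spherical space forms by the elliptization theorem. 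Reading the inclusion geometrically, $K(r)$ is obtained from $K(r')$ by regluing the filling solid torus $V'$, and at the level of groups $\pi_1(K(r)) \cong \pi_1(K(r'))/\langle\!\langle \bar c^{\,a} \rangle\!\rangle$, where $\bar c$ is the class of the core $c$ of $V'$ and $a = |mn' - m'n| = \Delta(r, r') \ge 1$ is the distance between the slopes. Abelianizing, and using that $[c]$ generates $H_1(K(r')) = \mathbb{Z}/m'$, recovers the homological constraint $m = \gcd(a, m')$; in particular $m \mid m'$.

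Next I would cut down the possible slopes. Since $K$ is a non-torus knot, the Cyclic Surgery Theorem of Culler--Gordon--Luecke--Shalen shows every lens space (cyclic) surgery slope is integral, and the Finite Surgery Theorem of Boyer--Zhang reduces the general finite case to integral slopes as well (the half-integral possibility can be treated by the same token). Thus $n = n' = 1$, so $r = m$ and $r' = m'$ are integers with $m \mid m'$ and $a = |m - m'|$. Moreover, for an integral filling the meridian and the core satisfy $\bar\mu = \bar c^{\pm 1}$ in $\pi_1(K(r'))$, so the regluing description simplifies to $\pi_1(K(r)) \cong \pi_1(K(r'))/\langle\!\langle \bar\mu^{\,a} \rangle\!\rangle$: passing from $r'$ to $r$ amounts to forcing the normally generating meridian to become $a$-torsion.

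The heart of the argument, and the step I expect to be the main obstacle, is to rule out this configuration of two comparable integral finite surgeries; I would split according to the geometry of $E(K)$. If $K$ is hyperbolic, then $K(r)$ and $K(r')$ are two finite, hence exceptional, fillings of the one-cusped hyperbolic manifold $E(K)$; the Boyer--Zhang bounds force $a = \Delta(r, r') \le 3$, and combined with $m \mid m'$ and $m \ne m'$ this confines $|m|$ to a tiny range, which I would contradict against the known lower bounds on $|H_1|$ for a finite surgery on a hyperbolic knot. If $K$ is a satellite knot, then by the classification of satellite knots admitting finite surgeries (Bleiler--Litherland, Boyer--Zhang) $K$ must be a cable of a torus knot, whose finite surgery slopes are given explicitly, and I would check directly that no two of them satisfy the divisibility $m \mid m'$ forced above. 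In each case the homological relation $m = \gcd(a, m')$ is incompatible with the surgery-theoretic restrictions, yielding the desired contradiction and showing that $r'$ cannot be a finite surgery slope.
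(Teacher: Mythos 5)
Your overall skeleton---contradiction, the homology epimorphism $\mathbb{Z}_{m'}\to\mathbb{Z}_m$ forcing $m\mid m'$, then a case split between hyperbolic knots (distance bounds on finite slopes) and cables of torus knots (explicit classification tables)---is the same as the paper's, but two steps have genuine problems. First, a logical foul: you invoke the peripheral Magnus property (Theorem~\ref{thm:peripheral Magnus property}) to make the inclusion proper, but in this paper that theorem is proved \emph{from} the present proposition (together with Proposition~\ref{inclusion}), so the appeal is circular; fortunately the properness plays no role in your argument and can simply be deleted. More seriously, your reduction ``thus $n=n'=1$'' is false as stated. The Cyclic Surgery Theorem makes \emph{cyclic} slopes on non-torus knots integral, but finite non-cyclic slopes need not be: Boyer--Zhang only bound the denominator by $2$ for hyperbolic knots, and cables of torus knots genuinely admit half-integral finite surgeries (e.g.\ the $(11,2)$--cable of the trefoil has a finite surgery at slope $43/2$, with induced slope on $T_{3,2}$ realizing the $(2,3,5)$ triple). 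So the whole integral reformulation via the core of the filling torus, $\pi_1(K(r))\cong \pi_1(K(r'))/\langle\!\langle \bar{\mu}^{\,a}\rangle\!\rangle$, rests on a false premise. The paper never reduces to integral slopes: it works directly with $m/n$ and $m'/n'$, observing that $m\mid m'$ makes $m$ divide $|mn'-nm'|$.

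Second, your hyperbolic endgame is a wave of the hand at exactly the hard point. From $\Delta\le 3$ (Boyer--Zhang) and $m\mid\Delta$ you get $m\le 3$, and then you appeal to ``known lower bounds on $|H_1|$ for a finite surgery on a hyperbolic knot''---no such bound excluding $m\le 3$ is available, and with your constant $3$ you would be left needing, e.g., that no hyperbolic genus-two L-space knot has such a surgery, which was not known at the time. The paper instead uses the sharper Ni--Zhang bound $\Delta\le 2$ \cite{NZ_finite}, whence $m\le 2$ and $|m/n|\le 2$; then the Ozsv\'ath--Szab\'o L-space constraint $|m/n|\ge 2g(K)-1$ forces $g(K)=1$, fiberedness of L-space knots pins $K$ down to a trefoil or the figure-eight knot, and both are excluded ($K$ is not a torus knot, and the figure-eight knot has no nontrivial finite surgery). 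Your satellite case matches the paper's (a check against the Bleiler--Hodgson classification that no two finite slopes on a cable of a torus knot satisfy $m\mid m'$), but as written the hyperbolic case does not close, and the integrality step must be removed rather than repaired.
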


\begin{proof}
By the assumption and Proposition~\ref{inclusion}, 
$r$ is a nontrivial finite surgery slope. 
Assume to the contrary that $r'$ is also a finite surgery slope.
Write $r = m/n$ and $r' = m'/n'$ with $m, m'>0$. 
Since $\langle\!\langle m/n \rangle\!\rangle \supset \langle\!\langle m'/n' \rangle\!\rangle$, 
we have a canonical epimorphism 
from $G(K) / \langle\!\langle m'/n' \rangle\!\rangle$ to $G(K) / \langle\!\langle m/n \rangle\!\rangle$, 
which induces an epimorphism 
\[\mathbb{Z}_{m'} \cong H_1(K(m'/n')) \to H_1(K(m/n)) \cong \mathbb{Z}_{m}.\] 
This then implies that $m' \ge m$ and $m'$ is a multiple of $m$. 

By the assumption $K$ is a hyperbolic knot or a satellite knot. 
Furthermore, in the latter case, 
since $K$ admits a nontrivial finite surgery, 
$K$ is a $(p, q)$--cable of a torus knot $T_{p', q'}$, where $|p| \ge 2$ \cite{BZ_finite_JAMS}.

\noindent
\textbf{Case 1.}
$K$ is a hyperbolic knot. 

Recall that the distance between finite surgery slopes of a hyperbolic knot is at most two \cite{NZ_finite}.
Hence $|mn' - nm' | \leq 2$. 
Since $m'$ is a multiple of $m$, 
the inequality $|mn' - nm' | \leq 2$ implies $m= 1,2$. 
Since $n \ne 0$, 
we have $|\frac{m}{n}| \leq 2$. 
A finite surgery is also an L-space surgery, 
so by \cite[Corollary 1.4]{OS_rational}, $2\geq |\frac{m}{n}|\geq 2g(K)-1$, 
where $g(K)$ is the genus of $K$. 
This implies $g(K) = 1$.  
Since a knot admitting an L-space surgery is fibered \cite{Ni, Ni2, Ghig, Juh}, 
$K$ is a trefoil knot $T_{3, 2}$ (or $T_{-3, 2}$) or the figure-eight knot. 
By assumption, $K$ is not a torus knot,  
and hence $K$ would be the figure-eight knot. 
However, the figure-eight knot has no nontrivial finite surgery, 
a contradiction.

\noindent
\textbf{Case 2.} 
$K$ is a $(p, q)$--cable of a torus knot $T_{p', q'}$. 

Finite surgeries on iterated torus knots are classified by \cite[Table~1]{BH}. 
For any cable of a torus knot which admits two finite surgeries $m/n$ and $m'/n'$, 
$m'$ is not a multiple of $m$.

This completes a proof of Proposition~\ref{non_finite_finite}. 
\end{proof}

\section{Peripheral Magnus property for knot groups}
\label{Magnus}

Now we are ready to prove the peripheral Magnus property for knot groups. 
We separate the proof into two cases depending upon $K$ is a non-torus knot or $K$ is a torus knot. 
Furthermore, in the latter case, 
we distinguish the specific case where $K$ is a trefoil knot $T_{3, 2}$ and 
$r = (18k+9) / (3k+1)$, $r' = (18k+9) / (3k+2)$ for technical reasons; see Proposition~\ref{trefoil}. 
These surgeries are only examples of surgeries along torus knots which give rise to (orientation reversingly) homeomorphic 3-manifolds with finite fundamental group \cite{Mathieu}. 
Thus, the case treated in Proposition \ref{trefoil} gives us the most subtle situation. 

\begin{thm:peripheral Magnus property}
Any nontrivial knot satisfies the peripheral Magnus property, 
namely $\langle\!\langle r \rangle\!\rangle = \langle\!\langle r' \rangle\!\rangle$ 
if and only if $r = r'$.
\end{thm:peripheral Magnus property}

\begin{proof}
The ``if'' part is obvious. 
Let us prove the ``only if'' part. 
Recall first that if $r= \infty$, 
then $\langle\!\langle r' \rangle\!\rangle =  \langle\!\langle \infty \rangle\!\rangle$ can happen only when $r' = \infty$ by Property P \cite{KM}. 
If $r = 0$, 
then $\langle\!\langle r' \rangle\!\rangle = \langle\!\langle 0 \rangle\!\rangle$ holds only if $r' = 0$ for homological reason. 
So in the following we assume $r$ is neither $\infty$ nor $0$. 
We divide the argument into two cases depending upon $K$ is a torus knot or a non-torus knot.

\noindent
\textbf{Case 1.}\ $K$ is a non-torus knot. 

Suppose for a contradiction that we have mutually distinct slopes $r$ and $r'$ which satisfy  
$\langle\!\langle r \rangle\!\rangle = \langle\!\langle r' \rangle\!\rangle$. 
Since $\langle\!\langle r \rangle\!\rangle \supset \langle\!\langle r' \rangle\!\rangle$, 
Propositions~\ref{inclusion} and \ref{non_finite_finite} show that $r$ is a finite surgery slope and $r'$ is not a finite surgery slope. 
On the other hand, 
since $\langle\!\langle r \rangle\!\rangle \subset \langle\!\langle r' \rangle\!\rangle$, 
Proposition~\ref{inclusion} implies that $r'$ is a finite surgery slope, 
a contradiction. 

\noindent
\textbf{Case 2.}\ $K$ is a torus knot $T_{p, q}$. 

Without loss of generality, we assume $p > q \ge 2$. 
Assume that $\langle\!\langle r \rangle\!\rangle = \langle\!\langle r' \rangle\!\rangle$ 
for mutually distinct slopes $r$ and $r'$. 
By Proposition~\ref{inclusion} $r$ and $r'$ are finite surgery slopes. 
Let us write $r = m/ n$ and $r' = m' / n'$; 
we may assume $m, m' > 0$.  
Then $m = |H_1(K(m/n))| =  |H_1(K(m'/n'))| = m'$. 
Recall that $T_{p, q}(m/n)$ has a Seifert fibration with base orbifold $S^2(p, q, |pqn-m|)$. 

Assume first that $\pi_1(T_{p, q}(m/n))$ is cyclic, 
i.e. $|pqn-m| = 1$. 
Then since $\pi_1(T_{p, q}(m/n')) \cong \pi_1(T_{p, q}(m/n))$ is finite cyclic, 
we have $|pqn-m| = |pqn' - m| =1$. 
A simple computation shows that 
$|n - n'| = \frac{2}{pq}$, 
which is impossible, because $pq \ge 6$.

Suppose next that $\pi_1(T_{p, q}(m/n))$ is finite, but non-cyclic.  
Then $\{ p, q, |pqn-m| \} = \{ 2, 2, A \}$ (where $A \ge 3$ is an odd integer),  
$\{2, 3, 3\}, \{ 2, 3, 4 \}$ or $\{ 2, 3, 5 \}$.

\noindent
\textbf{Subcase 1.}\ 
Assume that $\{ p, q, |pqn-m| \} = \{ 2, 2, A \}$. 
Then $K$ is a torus knot $T_{2, A}$ and $|2An - m| = 2$. 
Since $\pi_1(T_{2,A}(m/n)) \cong \pi_1(T_{2,A}(m/n'))$, 
$T_{2,A}(m/n)$ and $T_{2,A}(m/n')$ are homeomorphic \cite{AFW}. 
Thus $T_{A, 2}(m/n')$ has a base orbifold $S^2(2, 2, A)$, where $|2An' - m| = 2$. 
By the assumption $n \ne n'$, 
and hence 
$|n-n'| = \frac{2}{A}$, which is an integer. 
This means $A = 1$ or $2$, a contradiction.

\noindent
\textbf{Subcase 2.}\ 
Assume that  $\{ p, q, |pqn-m| \} = \{ 2, 3, 3 \}$. 
Then $K = T_{3, 2}$ and $|6n-m| = 3$. 
Similarly, we have $|6n' - m| = 3$. 
Thus $m$ is a multiple of $3$ and since $n$ and $n'$ are coprime to $m$, 
neither $n$ nor $n'$ is divided by $3$. 
Furthermore, the equalities $|6n-m| = |6n' - m| = 3$ gives 
$|n - n'| = 1$. 
Write $n = 3k +1$ for some integer $k$. 
Then $n' = 3k+2$ and 
$|6n-m| = |6n' - m| = 3$ is written as 
$|18k +6-m| = |18k+12 -m| = 3$. 
Hence we have a unique solution $m = 18k +9$. 
This gives 
$m/n = (18k+9)/(3k+1)$, $m/n' = (18k+9)/(3k+2)$. 
However, in this case 
$\langle\!\langle (18k+9)/(3k+1) \rangle\!\rangle \ne  \langle\!\langle (18k+9)/(3k+2) \rangle\!\rangle$ by Proposition~\ref{trefoil} below.

\noindent
\textbf{Subcase 3.}\ 
Assume that $\{ p, q, |pqn-m| \} = \{ 2, 3, 4 \}$. 
Then we have two possibilities: 
$K = T_{3, 2}$ and $|6n-m| = |6n' - m| = 4$, 
or $K = T_{4, 3}$ and $|12n-m| = |12n' - m| = 2$. 
In the former case, 
$|n - n'| = \frac{4}{3} \not\in \mathbb{Z}$, a contradiction.  
In the latter case, 
$|n - n'| = \frac{1}{3} \not\in \mathbb{Z}$, a contradiction.

\noindent
\textbf{Subcase 4.}\ 
Assume that $\{ p, q, |pqn-m| \} = \{ 2, 3, 5 \}$. 
Then we have three possibilities: 
$K = T_{3, 2}$ and $|6n-m| = |6n' -m| = 5$, 
$K = T_{5, 3}$ and $|15n-m| = |15n' -m| = 2$, 
or $K = T_{5, 2}$ and $|10n-m| = |10n' -m| = 3$. 
In either case $|n - n'|$ cannot be an integer and we have a contradiction.

This completes a proof of Theorem~\ref{thm:peripheral Magnus property}.
\end{proof}

\begin{proposition}
\label{trefoil}
Let $K$ be the trefoil knot $T_{3,2}$.
In the knot group $G(K)$, 
\[
\langle\!\langle (18k+9) / (3k+1) \rangle\!\rangle \ne \langle\!\langle (18k+9) / (3k+2)\rangle\!\rangle.
\]
More precisely, 
$\langle\!\langle (18k+9) / (3k+1) \rangle\!\rangle \not\subset \langle\!\langle (18k+9) / (3k+2) \rangle\!\rangle$
and 
$\langle\!\langle (18k+9) / (3k+1) \rangle\!\rangle \not\supset \langle\!\langle (18k+9) / (3k+2) \rangle\!\rangle$.
\end{proposition}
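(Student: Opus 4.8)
The plan is to work entirely inside a single copy of $G(K)$ and to decide each inclusion by testing whether the \emph{normal generator} of one filling survives the other filling. Write $G(K)=\langle u,v\mid u^{3}=v^{2}\rangle$ for the trefoil, let $h=u^{3}=v^{2}$ denote the central regular fiber, let $\mu$ be the meridian, and recall the standard peripheral relation $\lambda=\mu^{-6}h$ for the longitude. Put $r=m/n$ and $r'=m/n'$ with $m=18k+9$, $n=3k+1$, $n'=3k+2$, so $n-n'=-1$. Since $\langle\!\langle r\rangle\!\rangle$ is the normal closure of the single element $\mu^{m}\lambda^{n}$, we have $\langle\!\langle r\rangle\!\rangle\subset\langle\!\langle r'\rangle\!\rangle$ if and only if $\mu^{m}\lambda^{n}$ becomes trivial in $\Gamma':=G(K)/\langle\!\langle r'\rangle\!\rangle=\pi_1(K(r'))$. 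Using the filling relation $\bar\mu^{m}\bar\lambda^{n'}=1$ in $\Gamma'$ together with $n-n'=-1$, the image of $\mu^{m}\lambda^{n}$ equals $\bar\lambda^{-1}$; hence $\langle\!\langle r\rangle\!\rangle\subset\langle\!\langle r'\rangle\!\rangle$ if and only if the longitude dies, i.e. $\bar\lambda=1$ in $\Gamma'$. By the symmetric computation (now $n'-n=+1$), $\langle\!\langle r'\rangle\!\rangle\subset\langle\!\langle r\rangle\!\rangle$ if and only if $\bar\lambda=1$ in $\Gamma:=\pi_1(K(r))$. Note that Proposition~\ref{inclusion} does not settle the matter here, since $r$ and $r'$ are both finite surgery slopes; everything thus reduces to showing that the longitude is nontrivial in each of the two filled manifolds.

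Next I would rewrite $\bar\lambda$ in terms of the central fiber. Substituting $\lambda=\mu^{-6}h$ into the filling relation $\mu^{m}\lambda^{n'}=1$ gives $\bar\mu^{\,m-6n'}=\bar h^{-n'}$, and since $m-6n'=-3$ this yields $\bar\mu^{3}=\bar h^{\,3k+2}$, whence $\bar\lambda=\bar\mu^{-6}\bar h=\bar h^{-(6k+3)}$ in $\Gamma'$; the analogous computation in $\Gamma$ (where $m-6n=3$) gives $\bar\lambda=\bar h^{\,6k+3}$. Consequently, in either manifold $\bar\lambda=1$ if and only if $\bar h^{\,6k+3}=1$, that is, if and only if the order $N$ of the central element $\bar h$ divides the \emph{odd} integer $6k+3$. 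So it suffices to prove that $N$ is even.

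This is the crux, and it is precisely where the subtlety noted above is sidestepped: as these are Mathieu's examples, $K(r)$ and $K(r')$ are only orientation-reversingly homeomorphic, so $\Gamma\cong\Gamma'$ as abstract groups and no argument inspecting only the isomorphism type of the quotient can succeed. Each of $K(r),K(r')$ is a spherical space form whose Seifert fibration has base orbifold $S^{2}(3,2,3)$; the central cyclic group $C=\langle\bar h\rangle$ fits into $1\to C\to\Gamma\to A_4\to1$, where $A_4=\pi_1^{\mathrm{orb}}(S^{2}(3,2,3))$ is the $(2,3,3)$ triangle group and $\bar h$ generates the fiber. Because $\Gamma$ acts freely on $S^{3}$, it (and each of its subgroups) has periodic cohomology and therefore contains no subgroup isomorphic to $\mathbb{Z}_2\oplus\mathbb{Z}_2$. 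The plan is to play this against $[A_4,A_4]\cong\mathbb{Z}_2\oplus\mathbb{Z}_2$: if $N=|C|$ were odd, then the preimage in $\Gamma$ of $[A_4,A_4]$ would be a central extension of $\mathbb{Z}_2\oplus\mathbb{Z}_2$ by the odd-order group $C$; since $\gcd(N,4)=1$, Schur--Zassenhaus splits it and produces a copy of $\mathbb{Z}_2\oplus\mathbb{Z}_2$ inside $\Gamma$, a contradiction. Hence $N$ is even.

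Finally, since $N$ is even while $6k+3$ is odd, $N\nmid(6k+3)$, so $\bar h^{\,6k+3}\neq1$ and thus $\bar\lambda\neq1$ in both $\Gamma$ and $\Gamma'$. By the reduction of the first paragraph this gives $\langle\!\langle r\rangle\!\rangle\not\subset\langle\!\langle r'\rangle\!\rangle$ and $\langle\!\langle r'\rangle\!\rangle\not\subset\langle\!\langle r\rangle\!\rangle$, hence the two normal closures are distinct, as claimed. I expect the main obstacle to be the third step: one must establish purely group-theoretically, via free actions on $S^{3}$ (periodic cohomology and the resulting $2^{2}$-condition), that the Seifert fiber has even order, precisely because the abstract isomorphism $\Gamma\cong\Gamma'$ forbids distinguishing the two normal closures through the quotient groups themselves.
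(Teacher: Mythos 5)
Your proof is correct, and its overall skeleton matches the paper's: both arguments reduce the two non-inclusions to showing that the slope element of each filling becomes the power $\bar h^{\mp(6k+3)}$ of the central fiber in the other filled group, so that everything hinges on the order of $\bar h$ failing to divide the odd integer $6k+3$. Your computation via the peripheral relation $\lambda=\mu^{-6}h$ and commutativity of the peripheral subgroup reproduces exactly the identities the paper extracts from its explicit presentations (there, $c_1^3h^{3k+1}=1$ forces $c_1^3h^{-3k-2}=h^{-6k-3}$, and symmetrically), so this part is the same argument in different clothing. Where you genuinely diverge is the evenness step. The paper derives the Seifert invariants $S^2(k;1/3,-1/3,1/2)$ and $S^2(-k;1/3,-1/3,-1/2)$ by Kirby calculus (Figure~\ref{fig:trefoil}), fixes a section to present both fundamental groups, and then quotes \cite{LR, Or} for $|\pi_1|=24m$ with $\gcd(m,6)=1$, reading off that $\bar h$ has order exactly $24m/12=2m$, which is even. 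You instead argue abstractly: the filled manifolds are spherical space forms, so their fundamental groups have periodic cohomology and contain no $\mathbb{Z}_2\oplus\mathbb{Z}_2$; if $N=|\langle \bar h\rangle|$ were odd, then Schur--Zassenhaus (or vanishing of $H^2(V_4;C)$, as $\gcd(N,4)=1$) would split the preimage of $[A_4,A_4]\cong\mathbb{Z}_2\oplus\mathbb{Z}_2$ and produce such a subgroup, a contradiction. This buys independence from the surgery-diagram computation and from the precise order formula of \cite{LR, Or}: you need only Moser's base-orbifold formula $S^2(p,q,|pqn-m|)$, which the paper already invokes in Case 2 of Theorem~\ref{thm:peripheral Magnus property}, together with the fiber exact sequence. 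The cost is two standard facts you use silently and should cite: that a Seifert fibered space with finite fundamental group is a spherical space form (classical, e.g. \cite{Or}, so no appeal to geometrization is needed), and that $\langle\bar h\rangle$ is precisely the kernel of $\Gamma\to\pi_1^{\mathrm{orb}}(S^2(2,3,3))\cong A_4$. The paper's route yields more along the way (the exact order $2m$ of the fiber, and the explicit orientation-reversing homeomorphism of \cite{Mathieu}); yours is shorter and isolates the real mechanism, namely that only the parity of the fiber's order matters, which is exactly the invariant that survives the abstract isomorphism $\Gamma\cong\Gamma'$.
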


\begin{proof}
Figure \ref{fig:trefoil} shows that $K(r+6)$ is a Seifert fibered manifold with Seifert invariant 
$S^2(1/r, -1/3, -1/2, 1)$. 

\begin{figure}
\centering
\includegraphics[width=0.8\textwidth]{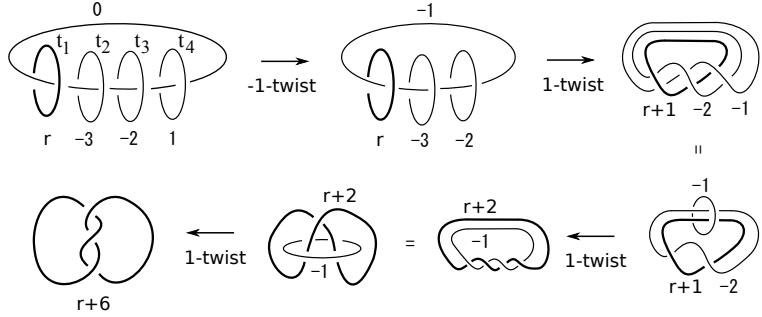}
\caption{Surgery diagrams for $K(r+6)$}\label{fig:trefoil}
\end{figure}

Let us choose $r = 3/(3k+1)$ so that $r+6 = (18k+9)/(3k+1)$. 
Then $K((18k+9)/(3k+1))$ has a Seifert invariant:
\[S^2((3k+1)/3, -1/3, -1/2, 1) = S^2(k; 1/3, -1/3, 1/2).\]
Similarly if we choose $r = -3/(3k+2)$, 
then $r + 6 = (18k+9)/(3k+2)$, 
and $K((18k+9)/(3k+2))$ has a Seifert invariant:
\[S^2((-3k-2)/3, -1/3, -1/2, 1) = S^2(-k; 1/3, -1/3, -1/2).\]
This shows that $K((18k+9)/(3k+1))$ is orientation reversingly homeomorphic to 
$K((18k+9)/(3k+2))$ \cite{Mathieu}.

To obtain a presentation of their fundamental groups, 
we fix a section for the circle bundle 
$K((18k+9)/(3k+1)) - \bigcup_{i = 1}^4 N(t_i)
= K((18k+9)/(3k+2)) - \bigcup_{i = 1}^4 N(t_i)$ 
arising from the top left picture of Figure~\ref{fig:trefoil}, 
where $N(t_i)$ is a fibered tubular neighborhood. 
Note that $t_1$ is the surgery dual to $K$ and $t_4$ is a regular fiber.

Then, with this section $\pi_1(K((18k+9)/(3k+1)))$ has a presentation:  
\[\langle c_1,c_2,c_3,c_4, h \mid 
[c_i,h]=1, c_1c_2c_3c_4=1, 
c_1^3 h^{3k+1}=1, 
c_2^3h^{-1}=1, 
c_3^2h^{-1}=1, 
c_4h = 1\rangle,\]
where $c_i$ is represented by a meridian of $t_i$, a boundary of the section, 
and $h$ is represented by a regular fiber. 

Using the same bases $c_1, \dots, c_4, h$, 
$\pi_1(K((18k+9)/(3k+2)))$ has a presentation: 
\[\langle c_1,c_2,c_3,c_4, h \mid 
[c_i,h]=1, c_1c_2c_3c_4=1, 
c_1^3 h^{-3k-2}=1, 
c_2^3h^{-1}=1, 
c_3^2h^{-1}=1, 
c_4h = 1\rangle.\]

Note that $|\pi_1(K((18k+9)/(3k+1)))| = |\pi_1(K((18k+9)/(3k+2)))|$ is 
$24m$ for some integer $m \ge 1$ coprime to $6$ \cite{LR, Or}.  

Note that the element $h$ is central and generates a cyclic normal subgroup $\langle h \rangle$ in 
both $\pi_1(K((18k+9)/(3k+1)))$ and $\pi_1(K((18k+9)/(3k+2)))$. 
Let us consider the quotient groups 
$\pi_1(K((18k+9)/(3k+1))) / \langle h \rangle$ and 
$\pi_1(K((18k+9)/(3k+2))) / \langle h \rangle$, 
which have the same presentation:
\[\langle c_1,c_2,c_3\mid c_1^3=c_2^3=c_3^2=c_1c_2c_3=1\rangle.\]
This group is the tetrahedral group (spherical triangle group $\Delta(2, 3, 3)$) of order $12$. 
Hence $h$ has order $24m/12 = 2m$ in both $\pi_1(K((18k+9)/(3k+1))$ and 
$\pi_1(K((18k+9)/(3k+2))$. 

\begin{claim}
\label{(18k+9)/(3k+2)_not_in_(18k+9)/(3k+1)}
The slope element with slope $(18k+9)/(3k+2)$ does not belong to $\langle\!\langle (18k+9)/(3k+1)\rangle\!\rangle$. 
\end{claim}

\begin{proof}
We first observe that the slope element with slope $(18k+9)/(3k+2)$ is expressed as 
$c_1^{3} h^{-3k-2}$; see Figure~\ref{fig:trefoil}. 
In $\pi_1(K((18k+9)/(3k+1)))$, 
the above presentation shows that $c_1^3 h^{3k+1}=1$. 
Hence $c_1^3 h^{-3k-2} = h^{-6k-3}$. 
Assume for a contradiction that 
$c_1^3 h^{-3k-2} = h^{-6k-3} \in \langle\!\langle (18k+9)/(3k+1) \rangle\!\rangle$. 
Then $h^{6k+3} = 1$ in $\pi_1(K((18k+9)/(3k+1)))$. 
Hence $6k+3$ would be divided by $2m$. 
This is impossible. 
\end{proof}

Similarly we have: 

\begin{claim}
\label{(18k+9)/(3k+1)_not_in_(18k+9)/(3k+2)}
The slope element with slope $(18k+9)/(3k+1)$ does not belong to $\langle\!\langle (18k+9)/(3k+2)\rangle\!\rangle$. 
\end{claim}

\begin{proof}
We first observe that the slope element with slope $(18k+9)/(3k+1)$ is expressed as 
$c_1^{3} h^{3k+1}$; see Figure~\ref{fig:trefoil}. 
In $\pi_1(K((18k+9)/(3k+2)))$, 
the above presentation shows that $c_1^3 h^{-3k-2}=1$. 
Hence $c_1^3 h^{3k+1} = h^{6k+3}$. 
Assume for a contradiction that 
$c_1^{3} h^{3k+1} = h^{6k+3}$ 
belongs to $\langle\!\langle (18k+9)/(3k+2) \rangle\!\rangle$. 
Then $h^{6k+3} = 1$ in $\pi_1(K((18k+9)/(3k+2)))$, 
and we have a contradiction. 
\end{proof}

Claims~\ref{(18k+9)/(3k+2)_not_in_(18k+9)/(3k+1)} and \ref{(18k+9)/(3k+1)_not_in_(18k+9)/(3k+2)} 
shows that $\langle\!\langle (18k+9)/(3k+1) \rangle\!\rangle \neq \langle\!\langle (18k+9)/(3k+2) \rangle\!\rangle$.
\end{proof}

Let $r$ and $r'$ be distinct slopes. 
Then Theorem~\ref{thm:peripheral Magnus property} says that 
$\langle\!\langle r \rangle\!\rangle \ne \langle\!\langle r' \rangle\!\rangle$. 
However, 
two normal subgroups $\langle\!\langle r \rangle\!\rangle$ 
and $\langle\!\langle r' \rangle\!\rangle$ may be ``similar'' in the sense that there is an automorphism $\varphi : G(K) \to G(K)$ such that 
$\varphi(\langle\!\langle r \rangle\!\rangle) = \langle\!\langle r' \rangle\!\rangle$. 
If $\langle\!\langle r \rangle\!\rangle$ and $\langle\!\langle r' \rangle\!\rangle$ are similar, 
then the automorphism $\varphi : G(K) \to G(K)$ induces an isomorphism 
$G(K)/\langle\!\langle r_1 \rangle\!\rangle \to  G(K)/\langle\!\langle r_2 \rangle\!\rangle$. 
For instance, 
if $K$ is amphicheiral, 
i.e. $E(K)$ admits an orientation revering homeomorphism $f$, 
then $f$ induces an automorphism $f_* : G(K) \to G(K)$ such that 
$f_*(\langle\!\langle r \rangle\!\rangle) = \langle\!\langle -r \rangle\!\rangle$, 
hence $\langle\!\langle r \rangle\!\rangle$ and $\langle\!\langle -r \rangle\!\rangle$ are distinct, but similar.

The next result shows that normal closures of slopes are rigid under automorphisms of $G(K)$ for prime, non-amphicheiral knots $K$. 

\begin{proposition}
\label{invariance}
Let $K$ be a prime knot and $\varphi$ an automorphism of $G(K)$. 
Then for any slope $r \in \mathbb{Q}$, 
$\varphi(\langle\!\langle r \rangle\!\rangle) = \langle\!\langle r \rangle\!\rangle$ or 
$\langle\!\langle -r \rangle\!\rangle$. 
Furthermore, 
if $K$ is not amphicheiral, 
then $\varphi(\langle\!\langle r \rangle\!\rangle) = \langle\!\langle r \rangle\!\rangle$. 
\end{proposition}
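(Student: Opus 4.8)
The plan is to reduce the statement to the geometry of self-homeomorphisms of $E(K)$, the key point being that for a prime knot every automorphism of $G(K)$ is realized topologically. First I would record that $E(K)$ is an irreducible Haken $3$--manifold with incompressible torus boundary, and that for a prime knot the peripheral subgroup $P(K)\cong\mathbb{Z}\oplus\mathbb{Z}$ is algebraically rigid: any automorphism $\varphi$ of $G(K)$ carries $P(K)$ to a conjugate $gP(K)g^{-1}$. Granting this, Waldhausen's theorem, together with the Seifert fibered case needed for torus knots (see \cite{AFW}), shows that $\varphi$ is induced by a self-homeomorphism $f\colon E(K)\to E(K)$ up to an inner automorphism; that is, $\varphi=\iota_g\circ f_*$ for some $g\in G(K)$. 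Since inner automorphisms fix every normal subgroup, it suffices to determine $f_*(\langle\!\langle r\rangle\!\rangle)$.

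Next I would analyze the action of $f$ on $H_1(\partial E(K))$ in the basis $([\mu],[\lambda])$. Filling $E(K)$ along a slope $s$ and along $f(s)$ yields homeomorphic manifolds $K(s)\cong K(f(s))$; since $K(\infty)=S^3$, the knot complement theorem of Gordon and Luecke forces $f$ to fix the meridian slope, whence $f_*[\mu]=\pm[\mu]$. As $[\lambda]$ generates the kernel of $H_1(\partial E(K))\to H_1(E(K))$, naturality gives $f_*[\lambda]=\pm[\lambda]$. Thus $f_*$ is diagonal on $H_1(\partial E(K))$, and a slope $r$ is sent to $\det(f_*|_{\partial})\cdot r=\pm r$, where the sign is $+1$ precisely when $f$ preserves the orientation of $E(K)$. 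Because $f$ sends an essential curve of slope $r$ to one of slope $\pm r$, we obtain $f_*(\langle\!\langle r\rangle\!\rangle)=\langle\!\langle \pm r\rangle\!\rangle$ (normal closures absorb the conjugation coming from the basepoint), and hence $\varphi(\langle\!\langle r\rangle\!\rangle)=\langle\!\langle r\rangle\!\rangle$ or $\langle\!\langle -r\rangle\!\rangle$, the first assertion. The one-to-one correspondence between slopes and their normal closures here is exactly Theorem~\ref{thm:peripheral Magnus property}.

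For the final assertion, suppose $K$ is not amphicheiral. An orientation-reversing self-homeomorphism of $E(K)$ would fix the meridian slope by Gordon--Luecke, and therefore extend across the filling solid torus to an orientation-reversing self-homeomorphism of $(S^3,K)$, making $K$ amphicheiral. Hence every self-homeomorphism $f$ of $E(K)$ preserves orientation, so $\det(f_*|_{\partial})=+1$, the slope $r$ is fixed, and $\varphi(\langle\!\langle r\rangle\!\rangle)=\langle\!\langle r\rangle\!\rangle$. The crux of the argument --- and the only place the primality of $K$ enters --- is the rigidity of the peripheral subgroup: for a composite knot $G(K)$ is a nontrivial amalgam over a meridian and admits automorphisms that do not respect the peripheral structure, so the reduction to a homeomorphism fails. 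Proving that $\varphi(P(K))$ is conjugate to $P(K)$ when $K$ is prime, via characteristic submanifold theory, is the main obstacle; everything after it is the routine boundary bookkeeping sketched above.
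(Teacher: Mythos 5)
Your proposal is correct and takes essentially the same route as the paper: realize $\varphi$ by a self-homeomorphism $f$ of $E(K)$, pin down the boundary action via Gordon--Luecke (meridian fixed) and homology (longitude fixed up to sign), and rule out the sign discrepancy for non-amphicheiral $K$ by extending $f$ over the filling torus to $(S^3,K)$. The ``main obstacle'' you flag---that for a prime knot every automorphism of $G(K)$ respects the peripheral structure and is induced by a homeomorphism of $E(K)$---is exactly what the paper disposes of in one line by citing \cite[Corollary 4.2]{Tsau}, so your Waldhausen-based sketch is just the standard proof of the result being cited.
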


\begin{proof}
When $K$ is a prime knot, \cite[Corollary 4.2]{Tsau} shows that any automorphism of $G(K)$ is induced by a homeomorphism of $E(K)$. 
Let $(\mu,\, \lambda)$ be a standard meridian-longitude pair of $K$. 
Since $\varphi(\mu) = \mu^{\varepsilon }$ ($\varepsilon = \pm 1$) by \cite{G-Lu2} and 
$\varphi(\lambda) = \lambda^{ \varepsilon'}$ ($ \varepsilon' = \pm 1$) for homological reasons, 
$\varphi(\mu^m\lambda^n) = \mu^{\varepsilon m}\lambda^{\varepsilon' n}$. 
This implies that $\varphi(\langle\!\langle m/n \rangle\!\rangle) 
= \langle\!\langle m/n \rangle\!\rangle$ or 
$ \langle\!\langle -m/n \rangle\!\rangle$. 
If $K$ is non-amphicheiral, 
then the homeomorphism preserves the orientation of $S^3$, and hence $\varepsilon = \varepsilon'$. 
Thus $\varphi(\langle\!\langle m/n \rangle\!\rangle) = \langle\!\langle m/n \rangle\!\rangle$.
\end{proof}

Theorem~\ref{thm:peripheral Magnus property}, 
together with Proposition~\ref{invariance}, 
implies the following stronger version of the peripheral Magnus property for prime, non-amphicheiral knots.

\begin{theorem}
\label{thm:peripheral Magnus property_strong}
Let $K$ be a nontrivial, prime knot. 
If $\varphi( \langle\!\langle r \rangle\!\rangle) = \langle\!\langle r' \rangle\!\rangle$ 
for some automorphism $\varphi$ of $G(K)$, 
then $r' = \pm r$. 
Moreover, if $K$ is not amphicheiral or $\varphi=id$, then $r' = r$.
\end{theorem}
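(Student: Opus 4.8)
The plan is to obtain this statement as a direct corollary of Proposition~\ref{invariance} and Theorem~\ref{thm:peripheral Magnus property}: the former constrains where an automorphism can send $\langle\!\langle r \rangle\!\rangle$, and the latter converts any resulting equality of normal closures back into an equality of slopes. So the whole argument is a matter of chaining these two facts together, with only a small amount of bookkeeping for the trivial slope.

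First I would dispose of the case $r = \infty$, which lies outside the rational-slope hypothesis of Proposition~\ref{invariance}. Here $\langle\!\langle r \rangle\!\rangle = G(K)$, and since $\varphi$ is an automorphism we get $\langle\!\langle r' \rangle\!\rangle = \varphi(G(K)) = G(K) = \langle\!\langle \infty \rangle\!\rangle$; Property P (equivalently Theorem~\ref{thm:peripheral Magnus property}) then forces $r' = \infty = r$, which meets every conclusion since $\pm\infty = \infty$. Now assume $r \in \mathbb{Q}$. Since $K$ is prime, Proposition~\ref{invariance} applies to $\varphi$ and yields $\varphi(\langle\!\langle r \rangle\!\rangle) = \langle\!\langle r \rangle\!\rangle$ or $\langle\!\langle -r \rangle\!\rangle$. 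Combining this with the hypothesis $\varphi(\langle\!\langle r \rangle\!\rangle) = \langle\!\langle r' \rangle\!\rangle$ gives $\langle\!\langle r' \rangle\!\rangle = \langle\!\langle r \rangle\!\rangle$ or $\langle\!\langle r' \rangle\!\rangle = \langle\!\langle -r \rangle\!\rangle$, and the peripheral Magnus property (Theorem~\ref{thm:peripheral Magnus property}) translates each of these into $r' = r$ or $r' = -r$ respectively, so $r' = \pm r$.

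For the ``moreover'' clause I would invoke the sharper half of Proposition~\ref{invariance}: if $K$ is not amphicheiral, then $\varphi(\langle\!\langle r \rangle\!\rangle) = \langle\!\langle r \rangle\!\rangle$, so $\langle\!\langle r' \rangle\!\rangle = \langle\!\langle r \rangle\!\rangle$ and Theorem~\ref{thm:peripheral Magnus property} gives $r' = r$. If instead $\varphi = \mathrm{id}$, the equality $\langle\!\langle r' \rangle\!\rangle = \langle\!\langle r \rangle\!\rangle$ is immediate and the same theorem again yields $r' = r$. Since every step is a direct appeal to one of the two cited results, there is no genuine obstacle in this proof; the only point requiring a little care is the separate treatment of the trivial slope $r = \infty$, which is not covered by the rational-slope statement of Proposition~\ref{invariance} and must be handled by hand as above.
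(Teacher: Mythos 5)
Your proposal is correct and follows essentially the same route as the paper: apply Proposition~\ref{invariance} to constrain $\varphi(\langle\!\langle r \rangle\!\rangle)$ to $\langle\!\langle \pm r \rangle\!\rangle$, then use the peripheral Magnus property (Theorem~\ref{thm:peripheral Magnus property}) to conclude $r' = \pm r$, with the non-amphicheiral and $\varphi = \mathrm{id}$ cases handled by the sharper half of the proposition. Your explicit treatment of $r = \infty$ via Property P is a small point of care that the paper's terse proof leaves implicit, but it does not change the argument.
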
 

\begin{proof}
Following Proposition~\ref{invariance} 
$\langle\!\langle r' \rangle\!\rangle = \varphi( \langle\!\langle r \rangle\!\rangle)$ 
is either $\langle\!\langle r \rangle\!\rangle$ or $\langle\!\langle -r \rangle\!\rangle$; 
the latter can happen only when $K$ is amphicheiral. 
Then the peripheral Magnus property (Theorem~\ref{thm:peripheral Magnus property}) shows 
$r' = r$ or $-r$, respectively. 
\end{proof}

As we have mentioned, for the torus knot $T_{3, 2}$ and slopes 
$r=(18k+9)/(3k+1)$ and $r'= (18k+9)/(3k+2)$, $T_{3, 2}(r)$ is (orientation reversingly) homeomorphic to $T_{3, 2}(r')$  \cite{Mathieu}. 
Thus the quotients $G(T_{3, 2})/ \langle\!\langle r \rangle\!\rangle $ and $G(T_{3, 2})/\langle\!\langle r' \rangle\!\rangle$ are isomorphic. 
On the other hand, 
Theorem \ref{thm:peripheral Magnus property_strong} says that no isomorphism between 
$G(T_{3, 2})/ \langle\!\langle r \rangle\!\rangle$ and $G(T_{3, 2})/\langle\!\langle r' \rangle\!\rangle$ can be induced by an automorphism of $G(T_{3,2})$. 
This illustrates a possibility for non-similar normal subgroups yielding the same quotient groups. 

\medskip

We close this section by giving the following observation for conjugacy among slope elements. 
A knot $K$ is \textit{cabled} if it is a $(p,q)$-cable of a knot $K'$, 
and we call the slope $pq$ the \textit{cabling slope} of a cabled knot $K$. In the following, 
we regard a torus knot as a cabled knot, the $(p,q)$-cable of the unknot.
If $K$ is neither a cabled knot nor a composite knot, 
then the peripheral subgroup $P(K)$ is malnormal \cite{HW}, 
and hence if $g \gamma_1 g^{-1} = \gamma_2$, 
then $g$ belongs to $P(K)$ and $\gamma_1 = \gamma_2$. 
Even when $K$ is a cabled knot or a composite knot we have: 

\begin{proposition}[conjugation of slope elements]
\label{conjugation}
Let $K$ be a nontrivial knot, 
and let $\gamma_1$ and $\gamma_2$ be slope elements in $P(K)$. 
Assume that $g \gamma_1 g^{-1} = \gamma_2$ in $G(K)$.  
Then $\gamma_2 = \gamma_1$. 
Furthermore, if $g \not\in P(K)$, 
then $K$ is a cabled knot or a composite knot and $\gamma_1$ represents the cabling slope or the meridional slope, respectively. 
\end{proposition}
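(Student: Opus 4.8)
The plan is to reduce the statement to two ingredients: a homological constraint coming from abelianization, and the malnormality of $P(K)$ for non-cabled, non-composite knots recorded just above (from \cite{HW}), supplemented in the exceptional cases by the Seifert structure of the cable space and the amalgam structure of a composite knot group.

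First I would record the homological constraint. Writing $\gamma_i = \mu^{m_i}\lambda^{n_i}$ and abelianizing $G(K)$, the longitude $\lambda$ is null-homologous, so the image of $\gamma_i$ in $H_1(E(K)) \cong \mathbb{Z}$ equals $m_i$. Since conjugate elements have the same image in the abelianization, $g\gamma_1 g^{-1} = \gamma_2$ forces $m_1 = m_2$. Consequently, once I know that $\gamma_1$ and $\gamma_2$ represent the same \emph{unoriented} slope, i.e. $\gamma_2 = \gamma_1^{\pm 1}$, the equality $m_1 = m_2$ pins down the sign and yields $\gamma_2 = \gamma_1$, provided $m_1 \neq 0$. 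Both the cabling slope $pq$ and the meridional slope have nonzero meridional exponent, so this will close the argument in every case that actually occurs. Next comes the easy dichotomy: if $g \in P(K)$, then since $P(K) \cong \mathbb{Z}\oplus\mathbb{Z}$ is abelian we get $\gamma_2 = g\gamma_1 g^{-1} = \gamma_1$ at once, and there is nothing more to prove; so I may assume $g \notin P(K)$. Then $\gamma_2$ is a nontrivial element of $gP(K)g^{-1} \cap P(K)$, so $P(K)$ is not malnormal, and by \cite{HW} (used in contrapositive form, exactly as quoted before the proposition) $K$ must be a cabled knot or a composite knot.

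It remains to identify the shared slope in each case. If $K$ is cabled — the $(p,q)$-cable of some knot, with a torus knot counted as the cable of the unknot — then $E(K)$ contains the cable space $C$ as its outermost piece, with $\partial E(K) \subset \partial C$, and $C$ is Seifert fibered; its regular fiber $h$ is central in $\pi_1(C)$ and represents the cabling slope $pq$ on $\partial E(K)$. I would show that the only peripheral elements carried back into $P(K)$ by a conjugator outside $P(K)$ are powers of $h$: powers of $h$ are fixed by every conjugation since $h$ is central, while for a peripheral element that is not a power of $h$ one passes to the base orbifold group $\pi_1(C)/\langle h\rangle$ and uses that the image of $P(K)$ is malnormal there, being a boundary subgroup of the orbifold group. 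This forces $\gamma_1$, and by the symmetric argument applied to $g^{-1}$ also $\gamma_2$, to represent the cabling slope, whence $\gamma_2 = \gamma_1^{\pm 1}$. If instead $K = K_1 \# K_2$ is composite, then $G(K) = G(K_1) \ast_{\langle \mu \rangle} G(K_2)$ is an amalgam over the meridian, and the essential meridional annulus realizes $\mu$ as a peripheral curve whose conjugates meet $P(K)$ nontrivially (for instance $\lambda_2\, \mu\, \lambda_2^{-1} = \mu$ with $\lambda_2 \notin P(K)$); using the normal form in the amalgamated product I would show that any peripheral element conjugate into $P(K)$ by some $g \notin P(K)$ must lie in the edge group $\langle \mu \rangle$, so again $\gamma_1$ and $\gamma_2$ both represent the meridional slope. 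In either case the slope has nonzero meridional exponent, so the homological step upgrades $\gamma_2 = \gamma_1^{\pm 1}$ to $\gamma_2 = \gamma_1$.

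The main obstacle is precisely this last identification: proving that, inside the Seifert cable space (respectively inside the amalgam for a composite knot), no peripheral element other than the central fiber (respectively the meridian) can be carried into $P(K)$ by a non-peripheral conjugator. Equivalently, one must establish that the only essential annuli in $E(K)$ with boundary on $\partial E(K)$ have the cabling slope or the meridional slope as boundary slope. I would handle this either by citing the classification of essential annuli in knot exteriors, or, more self-containedly, by deriving malnormality of the boundary subgroup modulo the center in the base orbifold group in the cabled case and malnormality of the edge group in the Bass--Serre tree in the composite case.
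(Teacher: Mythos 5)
Your strategy is correct in outline but genuinely different from the paper's, and one of your two proposed completions would not work as stated. The paper's proof begins by applying its own main theorem, the peripheral Magnus property (Theorem~\ref{thm:peripheral Magnus property}), to $\langle\!\langle \gamma_2 \rangle\!\rangle = \langle\!\langle g\gamma_1 g^{-1} \rangle\!\rangle = \langle\!\langle \gamma_1 \rangle\!\rangle$, obtaining $\gamma_2 = \gamma_1^{\pm 1}$ at the outset; this is what allows it to choose disjoint representative curves $c, c'$ on $\partial E(K)$, invoke the Annulus Theorem \cite[VIII.13]{Ja} to replace the singular annulus coming from the conjugation by an embedded essential annulus, and then isotope that annulus to be vertical in the Seifert fibered outermost piece of the torus decomposition, so that its boundary is a regular fiber, i.e.\ the cabling or meridional slope. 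You never use Theorem~\ref{thm:peripheral Magnus property}: you get the cabled/composite dichotomy from malnormality of $P(K)$ \cite{HW} and propose to identify the slope purely algebraically (centrality of the fiber plus malnormality of the boundary subgroup in the base orbifold group in the cabled case, normal forms in the meridional amalgam in the composite case), deriving $\gamma_2 = \gamma_1^{\pm 1}$ only afterwards. Your final homological step pinning the sign via $m_1 = m_2 \neq 0$ is the same as the paper's last step. What your route buys, if completed, is a proof independent of the paper's main theorem and of the Annulus Theorem, at the cost of substantially heavier combinatorial group theory that you only sketch.

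Two points need attention. First, in the cabled case with nontrivial companion, the conjugator $g$ lives in $G(K) = \pi_1(C) \ast_{\mathbb{Z}\oplus\mathbb{Z}} \pi_1(E(K'))$, not in $\pi_1(C)$: before your orbifold-quotient argument can be applied you must run a Bass--Serre/normal-form reduction on this amalgam to bring $g$ into the vertex group $\pi_1(C)$, and your final paragraph mentions the tree only for the composite case; the reduction is of the same kind as your composite-case analysis (conjugacy theorems for amalgamated products as in \cite{MKS}), so it is doable, but it is not optional. Second, your fallback of ``citing the classification of essential annuli in knot exteriors'' has a genuine gap: the conjugation hypothesis yields only a \emph{singular} annulus, and promoting it to an embedded one via the Annulus Theorem in the form the paper uses requires disjoint embedded boundary curves, i.e.\ prior knowledge that $\gamma_1$ and $\gamma_2$ represent the same slope --- exactly what the paper extracts from the peripheral Magnus property and what your plan does not have at that stage. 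So to complete your proof you must follow your self-contained algebraic route, not the geometric citation.
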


\begin{proof}
By assumption 
$\langle\!\langle \gamma_2 \rangle\!\rangle  = \langle\!\langle g \gamma_1 g^{-1} \rangle\!\rangle$, 
which coincides with $\langle\!\langle \gamma_1 \rangle\!\rangle$. 
Then the peripheral Magnus property (Theorem~\ref{thm:peripheral Magnus property}) says $\gamma_2 = \gamma_1$ or $\gamma_1^{-1}$. 
If $g \in P(K)$, then $g$ commutes with $\gamma_1$ hence $\gamma_2 = g \gamma_1 g^{-1} = \gamma_1$. So we assume $g \not\in P(K)$. 
Then we have a non-degenerate map $f : S^1 \times [0, 1] \to E(K)$ 
such that $f(S^1 \times \{ 0 \}) = c$ representing $\gamma_1$ and 
$f(S^1 \times \{ 1 \}) = c'$ representing $\gamma_2$.
Since $\gamma_2 = \gamma_1$ or $\gamma_1^{-1}$, 
we may assume $c \cap c' = \emptyset$. 
By \cite[VIII.13.Annulus Theorem]{Ja} we have a non-degenerate embedding $g : S^1 \times [0, 1] \to E(K)$ 
with $g |_{S^1 \times \{0, 1\}} = f |_{S^1 \times \{0, 1\}}$. 
Consider the torus decomposition \cite{JS,Jo} of $E(K)$ and denote the outermost piece which contains $\partial E(K)$ by $X$. 
The existence of an essential annulus $A = g(S^1 \times [0, 1])$ in $E(K)$ implies that $X$ is Seifert fibered. 
Hence $K$ is a cabled or a composite knot. 
Moreover, we may further isotope $A$ so that $A \subset X$ and it is vertical (i.e. consisting of fibers of Seifert fibration). 
Thus $\partial A$ is a regular fiber. 
Hence $\gamma_1$ represents either a cabling slope 
(if $K$ is a cabled knot), 
or a meridional slope (if $K$ is a composite knot).  
Finally, assume for a contradiction that $\gamma_2 = \gamma_1^{-1}$. 
Then after abelianization the equation $g \gamma_1 g^{-1} = \gamma_1^{-1}$ implies that 
$\gamma_1$ is trivial in $H_1(E(K))$, i.e. 
$\gamma_1$ is a preferred longitude. 
However, 
a cabling slope and a meridional slope are not preferred longitude, 
a contradiction. 
So $\gamma_2=\gamma_1$. 
\end{proof}

\section{Finitely generated normal closures of slopes}
\label{rank}
In this section we prove Theorem~\ref{finitely generated}. 
The next proposition gives a classification of finitely generated, 
normal subgroups of infinite index of knot groups. 

\begin{proposition}
\label{finitely_generated_normal}
Let $N$ be a finitely generated, 
nontrivial, normal subgroup of infinite index of $G(K)$. 
Then either

\begin{itemize}
\item[(i)]
$E(K)$ is Seifert fibered (i.e. $K$ is a torus knot) and $N$ is a subgroup of Seifert fiber subgroup, 
the subgroup generated by a regular fiber of a Seifert fibration 
(i.e. $N$ is a subgroup of the center of the torus knot group $G$) or, 
\item[(ii)]
$E(K)$ fibers over $S^1$ with surface fiber $\Sigma$ and $N$ is a subgroup of finite index of $\pi_1(\Sigma)$.
\end{itemize}
\end{proposition}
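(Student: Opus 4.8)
The plan is to read the structure of $N$ off the topology of the covering space of $E(K)$ that it determines, using Scott's compact core theorem together with the classical fibration and Seifert fiber space theorems. Throughout I use that $E(K)$ is aspherical: being compact, orientable and irreducible (loop and sphere theorems) with infinite fundamental group, it is a $K(G(K),1)$. First I would let $p\colon \widehat{E}\to E(K)$ be the regular covering corresponding to $N\triangleleft G(K)$. Then $\pi_1(\widehat{E})\cong N$ is finitely generated, the deck transformation group is the infinite group $Q=G(K)/N$, and $\widehat{E}$ is again irreducible and aspherical, since $\pi_n(\widehat{E})=\pi_n(E(K))=0$ for $n\ge 2$. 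Because $N$ is finitely generated, Scott's compact core theorem produces a compact submanifold $C\subset\widehat{E}$ whose inclusion induces an isomorphism $\pi_1(C)\cong\pi_1(\widehat{E})=N$; as both spaces are aspherical this inclusion is a homotopy equivalence, and one may arrange $C$ to be irreducible with incompressible frontier. Thus $N$ is the fundamental group of a compact, orientable, irreducible $3$--manifold, and since $Q$ is infinite, $\widehat{E}$ is noncompact, so $C$ has nonempty frontier.

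Next I would invoke the classification of finitely generated normal subgroups of infinite index in such $3$--manifold groups (built from Stallings' fibration theorem, the Seifert fiber space theorem of Casson--Jungreis and Gabai, and Scott's theorem; see \cite{AFW}, \cite{Hem}), which yields the dichotomy that either $N$ is infinite cyclic or $N$ is a surface group. In the first case an infinite cyclic normal subgroup forces $E(K)$ to be Seifert fibered, so $K$ is a torus knot; moreover the quotient of $G(K)$ by the fiber subgroup is a free product $\mathbb{Z}_p\ast\mathbb{Z}_q$, which has no nontrivial normal cyclic subgroup, so $N$ must lie in the fiber subgroup, which is the center of $G(K)$. This gives alternative~(i), and here the argument is essentially complete once the Seifert fiber space theorem is in hand.

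In the second case $N\cong\pi_1(\Sigma)$ is a surface group, and this is where the work lies. The classification then shows that $N$ sits inside the commutator subgroup $[G(K),G(K)]$ as a subgroup of finite index, so in particular $[G(K),G(K)]$ is finitely generated. Since $H_1(E(K))\cong\mathbb{Z}$, we have $\operatorname{Hom}(G(K),\mathbb{Z})\cong\mathbb{Z}$, so every epimorphism $G(K)\to\mathbb{Z}$ is the abelianization $\alpha$ up to sign, and $\ker\alpha=[G(K),G(K)]$. Applying Stallings' fibration theorem to $\alpha$, whose kernel is now known to be finitely generated, shows that $E(K)$ itself fibers over $S^1$ with fiber $\Sigma$ satisfying $\pi_1(\Sigma)=[G(K),G(K)]$; then $N$ is a subgroup of finite index in $\pi_1(\Sigma)$, which is alternative~(ii). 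The point of routing through $\operatorname{Hom}(G(K),\mathbb{Z})\cong\mathbb{Z}$ is that it identifies the fiber group with the commutator subgroup and so gives a fibration of $E(K)$ directly, with no need to pass to and then descend from a finite cover.

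The main obstacle I expect is precisely the surface--group case, specifically establishing that $N$ is contained in $[G(K),G(K)]$ with finite index, equivalently that the quotient $Q=G(K)/N$ is two--ended rather than infinitely--ended. Ruling out infinitely many ends for $Q$ is the delicate input of the underlying classification, and it is here that the normality of the surface subgroup together with the three--dimensionality of $\widehat{E}$ is used decisively; by contrast the Seifert case and the final passage to a fibration of $E(K)$ are comparatively formal once this end analysis is available.
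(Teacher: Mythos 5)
Your overall route is the same as the paper's---both proofs rest on the classification of finitely generated normal subgroups of infinite index in $3$--manifold groups (\cite{HemJaco}, \cite[p.118 (L9)]{AFW})---and your treatment of the infinite cyclic case is correct. But there is a genuine gap in the surface-group case. The classification does \emph{not} conclude that $N$ sits in $[G(K),G(K)]$ with finite index; its actual conclusion is a trichotomy whose third alternative is that $E(K)$ is a union of two twisted $I$--bundles over a compact (possibly non-orientable) surface $\Sigma$, with $N$ of finite index in $\pi_1(\Sigma)$ and $G(K)/\pi_1(\Sigma) \cong \mathbb{Z}_2 \ast \mathbb{Z}_2$. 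In that alternative your key claim fails outright: writing $\varphi \colon G(K) \to \mathbb{Z}_2 \ast \mathbb{Z}_2$ for the quotient map, one has $\varphi([G(K),G(K)]) = [\mathbb{Z}_2 \ast \mathbb{Z}_2, \mathbb{Z}_2 \ast \mathbb{Z}_2] \cong \mathbb{Z} \neq \{1\}$, so $[G(K),G(K)] \not\subset \pi_1(\Sigma)$ and $N$ need not lie in the commutator subgroup at all. Your parenthetical ``equivalently that $Q = G(K)/N$ is two-ended'' is exactly where the slip occurs: the infinite-dihedral-type quotient \emph{is} two-ended, so two-endedness (which is indeed the delicate content of the classification) does not give you $N \subset [G(K),G(K)]$ of finite index. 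Since your Stallings step needs $[G(K),G(K)]$ to be finitely generated, and that was deduced from the unproven containment, the gap propagates through the rest of the argument. Excluding this semi-fibration alternative is not a formality---it is the entire content of the paper's proof of this proposition.

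The fix is short and uses a tool you already have in hand, namely $H_1(E(K)) \cong \mathbb{Z}$: if $E(K)$ were such a union of twisted $I$--bundles, then $G(K)$ would surject onto $\mathbb{Z}_2 \ast \mathbb{Z}_2$, hence $\mathbb{Z} \cong G(K)/[G(K),G(K)]$ would surject onto its abelianization $\mathbb{Z}_2 \oplus \mathbb{Z}_2$, which is impossible since every quotient of $\mathbb{Z}$ is cyclic. This is precisely the paper's argument. Once this case is excluded, note also that the classification directly yields that $E(K)$ fibers over $S^1$ with $N$ of finite index in the fiber group, so your Scott compact core preliminaries and the detour through $\operatorname{Hom}(G(K),\mathbb{Z})$ and Stallings' fibration theorem, while correct, are optional rather than necessary.
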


\begin{proof}
This essentially follows from the classification of finitely generated, 
normal subgroups of infinite index of $3$--manifold groups \cite{HemJaco}, \cite[p.118 (L9)]{AFW}. 
Suppose for a contradiction that $N$ is neither (i) nor (ii). 
Then it follows from  \cite{HemJaco}, \cite[p.118 (L9)]{AFW} that 
$E(K)$ is the union of two twisted $I$-bundle over a compact connected (possibly non-orientable) surface $\Sigma$ 
and $N$ is a subgroup of finite index of $\pi_1(\Sigma)$. 
Then $G(K)$ is written as an extension
\[ \{1\} \rightarrow \pi_{1}(\Sigma) \rightarrow G(K) \rightarrow \mathbb{Z}_2 \ast \mathbb{Z}_2 \rightarrow \{1\}.\]
However, 
this would imply  $G(K)/[G(K), G(K)] \cong \mathbb{Z}$ has an epimorphism to $\mathbb{Z}_2 \oplus \mathbb{Z}_2$, 
the abelianization of 
$\mathbb{Z}_2 \ast \mathbb{Z}_2$, 
which is impossible. 
\end{proof}

\noindent\textbf{Proof of Theorem~\ref{finitely generated}.}
Let us assume $r$ is a finite surgery slope of $K$. 
Then $\pi_1(K(r)) = G(K) / \langle\!\langle r \rangle\!\rangle$ is finite. 
Hence $\langle\!\langle r \rangle\!\rangle$ is a subgroup of finite index of the finitely generated group $G(K)$,  
so it is finitely generated \cite[Corollary~2.7.1]{MKS}.

If $K$ is a torus knot $T_{p, q}$ and $r$ is a cabling slope $pq$, 
then $r$ is represented by a regular fiber $t$ in the Seifert fiber space $E(T_{p, q})$, 
and $\langle\!\langle pq \rangle\!\rangle$ is the infinite cyclic normal subgroup generated by $t$. 
This means that $\langle\!\langle pq \rangle\!\rangle \cong \mathbb{Z}$. 

To prove the converse, we suppose that $\langle\!\langle r \rangle\!\rangle$ is finitely generated. 
We divide into two cases depending upon $\langle\!\langle r \rangle\!\rangle$ has finite index in $G(K)$ or not. 
If it has finite index, then $G(K)/\langle\!\langle r \rangle\!\rangle = \pi_1(K(r))$ is a finite group, 
and hence $r$ is a finite surgery slope. 

If $\langle\!\langle r \rangle\!\rangle$ has infinite index in $G(K)$, 
then we have two possibilities described in Proposition \ref{finitely_generated_normal}. 
If we have the case (i) in Proposition \ref{finitely_generated_normal}, 
then $K$ is a torus knot and $r = pq$. 
Now suppose for a contradiction that the case (ii) in Proposition \ref{finitely_generated_normal} occurs. 
Then $\langle\!\langle r \rangle\!\rangle$ is a subgroup of finite index of the normal subgroup $\pi_{1}(\Sigma) (\subset G(K))$, 
in particular, 
$r$ lies in the Seifert surface subgroup $\pi_{1}(\Sigma)$. 
This implies  $r$ is a preferred longitude, i.e. $r = 0$. 
Since $\langle\!\langle 0 \rangle\!\rangle = \langle\!\langle \partial \Sigma \rangle\!\rangle \subset \pi_1(\Sigma)$
is a normal subgroup of $G(K)$, 
it is also normal in $\pi_1(\Sigma)$. 
Hence $\langle\!\langle \partial \Sigma \rangle\!\rangle$ is a normal subgroup of finite index of $\pi_1(\Sigma)$, 
and hence $\pi_1(\Sigma) / \langle\!\langle \partial \Sigma \rangle\!\rangle \cong \pi_1(\widehat \Sigma)$ would be a finite group, 
where $\widehat \Sigma$ is a closed orientable surface of genus $g(K) \ge 1$ obtained by capping off $\Sigma$ along $\partial \Sigma$. 
This is a contradiction. 
\qed

We close this section with the following. 

\begin{proposition}
\label{intersection_finitely_generated}
Let $K$ be a nontrivial knot and $r$ a slope of $K$ which is neither a finite surgery slope nor a  cabling slope $pq$ if $K$ is a torus knot $T_{p, q}$. 
Then for any normal subgroup $N$ of $G(K)$ 
the intersection $N \cap \langle\!\langle r \rangle\!\rangle$ is either trivial or not finitely generated. 
\end{proposition}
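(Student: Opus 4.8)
The plan is to exploit the fact that $M := N \cap \langle\!\langle r \rangle\!\rangle$ is again a \emph{normal} subgroup of $G(K)$, being an intersection of two normal subgroups, and then to show that if $M$ is nontrivial and finitely generated it would force $r$ to be a finite surgery slope, or $K$ to be a torus knot $T_{p,q}$ with $r=pq$ — both excluded by hypothesis. First I would dispose of the easy case where $M$ has finite index in $G(K)$: then $\langle\!\langle r \rangle\!\rangle \supseteq M$ also has finite index, so $\langle\!\langle r \rangle\!\rangle$ is a finite-index subgroup of the finitely generated group $G(K)$ and is therefore finitely generated, whence Theorem~\ref{finitely generated} yields the excluded conclusion.

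So assume $M$ has infinite index. Then $M$ is a nontrivial, finitely generated normal subgroup of infinite index, and I would apply Proposition~\ref{finitely_generated_normal}, which offers exactly two alternatives. In alternative (i), $K=T_{p,q}$ and $M$ lies in the center $\langle t\rangle=\langle\!\langle pq\rangle\!\rangle\cong\mathbb{Z}$ generated by a regular fiber $t$; hence $t^k\in M\subseteq\langle\!\langle r\rangle\!\rangle$ for some $k\ge 1$, so the image $\bar t$ of $t$ in $\pi_1(K(r))$ satisfies $\bar t^{\,k}=1$. Since $r\ne pq$, $K(r)$ is an irreducible Seifert fibered space over $S^2(p,q,|pqn-m|)$; were $\pi_1(K(r))$ infinite it would be aspherical and hence torsion-free (as recalled in the proof of Lemma~\ref{power of meridian}), forcing $\bar t=1$ and thus $\langle\!\langle pq\rangle\!\rangle=\langle t\rangle\subseteq\langle\!\langle r\rangle\!\rangle$, whereupon Proposition~\ref{inclusion} (with $pq\ne r$) makes $r$ a finite surgery slope, contradicting infiniteness. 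Therefore $\pi_1(K(r))$ is finite, i.e. $r$ is a finite surgery slope, which is excluded.

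In alternative (ii), $E(K)$ fibers over $S^1$ with fiber $\Sigma$ and $M$ has finite index in the fiber group $\pi_1(\Sigma)=\ker\phi$, where $\phi\colon G(K)\to H_1(E(K))=\mathbb{Z}$ is abelianization with $\phi(\mu)=1$, $\phi(\lambda)=0$. Writing $r=m/n$, the slope element is $\mu^m\lambda^n$, so $\phi(\langle\!\langle r\rangle\!\rangle)=m\mathbb{Z}$. If $r\ne 0$, then $m\ne 0$ and $\langle\!\langle r\rangle\!\rangle\cap\ker\phi\supseteq M$ is finite index in the surface group $\pi_1(\Sigma)$, hence finitely generated; from the extension $1\to\langle\!\langle r\rangle\!\rangle\cap\ker\phi\to\langle\!\langle r\rangle\!\rangle\to m\mathbb{Z}\to 1$ with both ends finitely generated, $\langle\!\langle r\rangle\!\rangle$ would be finitely generated, again contradicting Theorem~\ref{finitely generated}. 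If $r=0$, then $\langle\!\langle r\rangle\!\rangle=\langle\!\langle\partial\Sigma\rangle\!\rangle\subseteq\ker\phi=\pi_1(\Sigma)$ contains the finite-index subgroup $M$, so $\pi_1(\Sigma)/\langle\!\langle\partial\Sigma\rangle\!\rangle\cong\pi_1(\widehat\Sigma)$ is finite; but $\widehat\Sigma$ is a closed orientable surface of genus $g(K)\ge 1$, a contradiction, exactly as at the end of the proof of Theorem~\ref{finitely generated}.

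The main obstacle I anticipate is the bookkeeping in alternative (ii): one must cleanly separate the cases $r=0$ and $r\ne 0$ and, in the latter, recognize that producing a surjection $\langle\!\langle r\rangle\!\rangle\to\mathbb{Z}$ with finitely generated kernel already reduces to Theorem~\ref{finitely generated}. Alternative (i) is delicate only in that one must prevent a reducible filling from creating spurious torsion, but this is settled by the irreducibility of $K(r)$ for $r\ne pq$, so in every branch the hypotheses on $r$ are violated and $M$ cannot be both nontrivial and finitely generated.
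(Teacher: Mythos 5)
Your proof is correct, but it takes a genuinely different route from the paper's. The paper keeps $H=N\cap\langle\!\langle r\rangle\!\rangle$ inside $\langle\!\langle r\rangle\!\rangle$ itself: since Theorem~\ref{finitely generated} makes $\langle\!\langle r\rangle\!\rangle$ an \emph{infinitely generated} $3$--manifold group (the fundamental group of the associated cover of $E(K)$), and $H$ is normal in it, Scott's theorem \cite[Theorem~3.2]{Scott} forces $H\cong\mathbb{Z}$; then \cite{GH} pins $K$ down as a torus knot, \cite[II.4.8.Lemma]{JS} places $H$ inside the fiber subgroup $\langle\!\langle pq\rangle\!\rangle$, and Proposition~\ref{reducing}(ii) kills $H$ because $\langle\!\langle pq\rangle\!\rangle\cap\langle\!\langle r\rangle\!\rangle=\{1\}$. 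You instead apply the paper's own Proposition~\ref{finitely_generated_normal} (i.e.\ the Hempel--Jaco classification \cite{HemJaco}) directly to $M=N\cap\langle\!\langle r\rangle\!\rangle$ as a finitely generated normal subgroup of $G(K)$, after first disposing of the finite-index case via Theorem~\ref{finitely generated}, and then rule out both alternatives by elementary means: in the torus-knot alternative your argument is essentially an inlined proof of the relevant half of Proposition~\ref{reducing}(ii), with the endgame rerouted through Proposition~\ref{inclusion} rather than the cyclic-quotient contradiction the paper uses there; in the fibered alternative, your observation that $\langle\!\langle r\rangle\!\rangle\cap\ker\phi$ is squeezed to finite index in the free group $\pi_1(\Sigma)$, so that the extension $1\to\langle\!\langle r\rangle\!\rangle\cap\ker\phi\to\langle\!\langle r\rangle\!\rangle\to m\mathbb{Z}\to 1$ would make $\langle\!\langle r\rangle\!\rangle$ finitely generated, is a clean reduction to Theorem~\ref{finitely generated} that has no counterpart in the paper (the paper's hypothesis never confronts this case because Scott's theorem has already forced $H\cong\mathbb{Z}$). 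What each approach buys: the paper's is shorter, since Scott's theorem immediately collapses all structure to the infinite cyclic case, but it imports two external results (\cite{Scott}, \cite{GH}) and a forward reference to Proposition~\ref{reducing}(ii); yours is more self-contained, relying only on machinery already developed in Sections 2 and 4 (plus the standard facts, already invoked in the paper, that $T_{p,q}(r)$ is irreducible for $r\ne pq$ and that aspherical manifold groups are torsion-free), at the cost of a longer case analysis and the $r=0$ bookkeeping you correctly flag, which you settle exactly as in the endgame of Theorem~\ref{finitely generated}.
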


\begin{proof}
Theorem~\ref{finitely generated}, together with the assumption, shows that $\langle\!\langle r \rangle\!\rangle$ is not finitely generated. 
Let $M$ be the covering space of $E(K)$ associated to $\langle\!\langle r \rangle\!\rangle \subset G(K)$. 
Then $\langle\!\langle r \rangle\!\rangle$ is an infinitely generated $3$--manifold group $\pi_1(M)$.  
Let us write $H = N \cap \langle\!\langle r \rangle\!\rangle$, 
which is normal in $G(K)$, and hence normal in $\langle\!\langle r \rangle\!\rangle$. 
Assume for a contradiction that $H$ is nontrivial and finitely generated.   
Then \cite[Theorem~3.2]{Scott} shows that $H$ is infinite cyclic. 
This implies that $K$ is a torus knot $T_{p, q}$ \cite{GH} and $H$ is contained in the infinite cyclic normal subgroup 
$\langle\!\langle pq \rangle\!\rangle$ generated by a regular fiber of a Seifert fibration of $E(K) = E(T_{p, q})$ \cite[II.4.8.Lemma]{JS}. 
Hence $H \subset \langle\!\langle r \rangle\!\rangle \cap \langle\!\langle pq \rangle\!\rangle$. 
However, $\langle\!\langle r \rangle\!\rangle \cap \langle\!\langle pq \rangle\!\rangle$ 
would be trivial as we will prove in Proposition~\ref{reducing}(ii). 
This is a contradiction. 
Thus $N \cap \langle\!\langle r \rangle\!\rangle$ is not finitely generated. 
\end{proof}

\section{Finite family of normal closures of slopes and their intersection}
\label{intersection}

The goal of this section is to establish Theorem~\ref{thm:finite_family}.

For an element $h \in G(K)$, 
we denote its centralizer $\{g \in G(K)\: | \: gh=hg\}$ by $Z(h)$. 
We call $h \in G(K)$ a \textit{central element} of $G(K)$ 
if $Z(h) = G(K)$, 
and denote the \textit{center} of $G(K)$, 
the normal subgroup consisting of all the central elements, 
by $Z(K)$. 

Let $r$ be a slope of $K$. 
Then, throughout this section, 
we use $r$ to denote also a slope element $\gamma \in P(K)$ representing $r$. 
So $Z(r)$ means $Z(\gamma)$. 
(Note that $\gamma^{-1}$ also represents $r$ and $Z(\gamma) = Z(\gamma^{-1})$.)

Recall that $Z(r) = G(K)$ happens for some slope $r$ 
if and only if $K$ is a torus knot $T_{p, q}$ and $r = pq$; 
see \cite{BurdeZieschang} and \cite[Theorem~2.5.2]{AFW}.
Also, we remark that by Proposition \ref{conjugation} $Z(r)=P(K)$ unless $K$ is cabled and $r$ is the cabling slope, or $K$ is composite and $r$ is the meridional slope (we do not use this fact, though). 

If $K = T_{p, q}$ and $r = pq$, 
then given non-central element $g \in G(K)$, 
obviously $a g a^{-1} \in Z(r)$ for any $a \in G(K)$. 
Except this very restricted situation, 
we have:

\begin{lemma}
\label{centralizer}
Let $K$ be a nontrivial knot and $r$ a nontrivial slope of $K$.   
If $K$ is a torus knot $T_{p, q}$, we assume $r \neq pq$.
Then for every non-central element $g \in G(K)$, 
we can take an element $a \in G(K)$ so that $aga^{-1} \not \in Z(r)$.
\end{lemma}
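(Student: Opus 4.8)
The plan is to recast the statement in terms of normal cores. Saying that every non-central $g$ admits a conjugate outside $Z(r)$ is equivalent to saying that the \emph{normal core} $\bigcap_{a \in G(K)} a Z(r) a^{-1}$ is contained in the center $Z(K)$. So I would argue by contradiction: suppose some non-central $g$ satisfies $a g a^{-1} \in Z(r)$ for all $a$. Then every conjugate of $g$ commutes with $r$, so $r$ centralizes the nontrivial normal subgroup $N := \langle\!\langle g \rangle\!\rangle$; by normality every conjugate of $r$ also centralizes $N$, whence the two nontrivial normal subgroups $R := \langle\!\langle r \rangle\!\rangle$ and $N$ centralize one another, $[R,N]=1$. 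Since $r \neq \infty$, Property P gives $R \neq G(K)$. The goal is then to show that two commuting nontrivial normal subgroups force $K$ to be a torus knot with one of them lying in the center, which will contradict either the hypothesis $r \neq pq$ or the non-centrality of $g$.

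The technical heart is to place $N$ inside a copy of $\mathbb{Z} \oplus \mathbb{Z}$. I would first dispose of the generic situation: by Proposition~\ref{conjugation} one has $Z(r) = P(K) \cong \mathbb{Z}\oplus\mathbb{Z}$ unless $K$ is cabled with $r$ the cabling slope, or $K$ is composite with $r$ meridional; the latter is excluded here because $r \neq \infty$. When $Z(r) = P(K)$, the inclusion $N \subseteq Z(r) = P(K)$ already exhibits $N$ as abelian of rank $\le 2$. In the remaining case ($K$ cabled, $r$ the cabling slope) one has $Z(r) = \pi_1(X)$, where $X$ is the outermost Seifert-fibered piece of the torus decomposition and $r$ is a regular fiber; the inclusion $Z(r)\subseteq \pi_1(X)$ follows by the annulus argument of Proposition~\ref{conjugation}. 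Here I would let $G(K)$ act on the Bass--Serre tree $T$ of its torus decomposition: since $N \subseteq \pi_1(X)$ fixes the vertex $v_X$, its fixed-point set $\mathrm{Fix}(N)$ is a nonempty subtree, and it is $G(K)$-invariant because $N$ is normal ($g\,\mathrm{Fix}(N) = \mathrm{Fix}(gNg^{-1}) = \mathrm{Fix}(N)$). By minimality of the action, $\mathrm{Fix}(N) = T$, so $N$ is contained in every edge group, each of which is $\cong \mathbb{Z}\oplus\mathbb{Z}$. In both cases $N$ is a nontrivial, finitely generated, abelian normal subgroup; it has infinite index since a nontrivial knot group is not virtually abelian.

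Finally I would feed $N$ into Proposition~\ref{finitely_generated_normal}. Case (ii) cannot occur, because a finite-index subgroup of the fiber-surface group $\pi_1(\Sigma)$ (free nonabelian of rank $2g(K) \ge 2$) cannot be abelian. Hence case (i) holds: $K$ is a torus knot $T_{p,q}$ and $N$ lies in the center $Z(K)$, which is generated by the regular fiber, i.e. the cabling slope element. Then $g \in N \subseteq Z(K)$ is central, contradicting the choice of $g$. In the cabled subcase $K$ is not a torus knot, so case (i) also fails, forcing $N = \{1\}$, contradicting $g \neq 1$. Either way we reach a contradiction, which proves the lemma. I expect the Bass--Serre reduction, together with the precise identification $Z(r) = \pi_1(X)$ in the cabling-slope case, to be the main obstacle; the rest is a direct appeal to Proposition~\ref{finitely_generated_normal}.
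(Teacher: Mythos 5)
Your architecture is sound and, in the hard case, genuinely different from the paper's. The paper also reduces at once to $g \in Z(r)$ and splits according to the structure of the centralizer, citing \cite[Theorem~2.5.1]{AFW}: either $Z(r)$ is abelian of rank at most two, or $Z(r)$ is conjugate into the fundamental group of a Seifert fibered piece of the torus decomposition. In the abelian case the paper argues exactly as your generic case does: $N = \langle\!\langle g \rangle\!\rangle$ is a nontrivial, finitely generated, abelian normal subgroup of infinite index, which Proposition~\ref{finitely_generated_normal} rules out unless $K$ is a torus knot with $N$ central (contradicting non-centrality of $g$). In the Seifert-piece case, however, the paper does not use Bass--Serre theory: writing $G(K) = G_1 \ast_H G_2$ along one decomposing torus, it constructs the conjugator $v$ by hand via normal forms (\cite[Corollary~4.4.2]{MKS}), in two subcases, using \cite[Theorem~10.6]{Hem} and \cite{HemJaco} to pin down $\langle\!\langle w \rangle\!\rangle_{G_1}$. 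Your replacement --- $N$ fixes a vertex of the JSJ tree, $\mathrm{Fix}(N)$ is a nonempty $G(K)$-invariant subtree, and minimality forces $N$ into an edge group $\cong \mathbb{Z}\oplus\mathbb{Z}$ --- is cleaner, and it is correct granted two checks you leave implicit: minimality holds because no piece of the torus decomposition of a knot exterior is $T^2 \times [0,1]$, so every edge group is proper in its adjacent vertex groups; and your appeal to Proposition~\ref{conjugation} is non-circular (that proposition rests on Theorem~\ref{thm:peripheral Magnus property}, which is proved without Lemma~\ref{centralizer}; the paper even records the consequence $Z(r)=P(K)$ away from cabling/meridional slopes, while remarking that it does not use it).

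The one genuine gap is your justification of $Z(r) \subseteq \pi_1(X)$ in the cabling-slope case. The annulus argument in Proposition~\ref{conjugation} does not localize the centralizing element: the Annulus Theorem replaces the singular annulus tracing the conjugation by an \emph{embedded} annulus with the same boundary, and that embedded annulus in general represents a different conjugating element, so the argument constrains only the slope (it must be a regular fiber, hence the cabling or meridional slope), not $g$ itself. As written, ``$Z(r) \subseteq \pi_1(X)$ follows by the annulus argument'' is therefore not a proof. The fact you need is true, but it is exactly the second alternative of \cite[Theorem~2.5.1]{AFW} --- $Z(r)$ is conjugate into the $\pi_1$ of a Seifert fibered piece --- which is the citation the paper makes at the corresponding point; note that containment of $Z(r)$ in \emph{some} conjugate of \emph{some} vertex group already lets your fixed-tree argument run, so you never need the precise equality $Z(r) = \pi_1(X)$. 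With that citation substituted for the annulus argument, your proof is complete.
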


\begin{proof}
If $g \not\in Z(r)$, 
then take $a = 1$ to obtain the desired conclusion. 
So in the following we assume $g \in Z(r)$. 
By a structure theorem of the centralizer of 3-manifold groups \cite[Theorem 2.5.1]{AFW}, 
either $Z(r)$ is an abelian group of rank at most two, 
or $Z(r)$ is conjugate to a subgroup of the fundamental group of a Seifert fibered piece of $E(K)$ with respect to the torus decomposition of $E(K)$ \cite{JS, Jo}.

First assume that $Z(r)$ is an abelian group of rank at most two. 
(In fact, $Z(r) = \mathbb{Z} \oplus \mathbb{Z}$, because $Z(r) \supset P(K)$.)
Assume, to the contrary that $aga^{-1} \in Z(r)$ for all $a \in G(K)$. 
Then the normal closure $N = \langle\!\langle g \rangle\!\rangle$ of $g$ in $G(K)$ is a nontrivial normal subgroup of $G(K)$ contained in $Z(r)$.
Thus $N$ is finitely generated. 
If $N$ has finite index in $G(K)$ then $G(K)$ is virtually abelian, 
which cannot happen for nontrivial knot groups since the knot group contains a free group of rank $\geq 2$, 
the fundamental group of the minimum genus Seifert surface. 
So $N$ is an abelian normal subgroup of infinite index of $G(K)$. 
By Proposition \ref{finitely_generated_normal}, 
either $K$ is a torus knot and $N$ is a subgroup of the center of $G(K)$, 
or $E(K)$ fibers over $S^{1}$ with torus fiber. 
In the former case, $g \in N$ is a central element, contradicting the choice of $g$.  
In the latter case $\partial E(K) = \emptyset$ so this cannot happen, either.

Next we assume that $Z(r)$ is not an abelian group of rank at most two. 
Then $K$ is a torus knot, or a satellite knot which has a Seifert fibered piece with respect to its torus decomposition. 
Assume first that $K$ is a torus knot $T_{p, q}$. 
Then $Z(r)$ is $\mathbb{Z} \oplus \mathbb{Z}$ since $r \ne pq$ (cf. \cite[Theorem 2.5.2]{AFW}).
This contradicts the assumption. 
Thus $K$ is a satellite knot whose exterior has a Seifert fibered piece  $M$ with respect to its torus decomposition.  
Let $T$ be an essential torus which is a member of the family of tori giving the torus decomposition of $E(K)$. 
Since $T$ is separating, $E(K)$ is expressed as $E_1 \cup _{T} E_2$; 
we may assume $M \subset E_1$. 
Then $G(K)$ is a nontrivial amalgamated product $G(K) = G_1\ast_{H}G_2$, 
where $H$ denotes the fundamental group of an essential torus $T$ and $G_i = \pi(E_i)$. 
The centralizer of $r$ is a conjugate to a subgroup of $\pi_1(M) \subset G_1$, 
thus there is an element $u \in G(K)$ such that $u Z(r) u^{-1} \subset G_1$.
Since $g \in Z(r)$, 
this implies $u g u^{-1} \in u Z(r) u^{-1} \subset G_1$. 
Let us write $w = u g u^{-1} \in G_1$. 

\begin{claim}
\label{torus knot}
There exists an element 
$v \in G(K)$ such that 
$v w v^{-1} \not\in G_1$. 
\end{claim}

\begin{proof}
We divide the argument into the following two cases. \\

\noindent
\textbf{Case 1.} There exists $f \in G_1$ such that $f w f^{-1} \in G_1 - H$.\\ 

We choose $v'$ so that $v' \in G_2 - H$ and let $v = v' f$.
Then by \cite[Corollary 4.4.2]{MKS}, 
the canonical form of $v w v^{-1} = v' (fwf^{-1}) v'^{-1}$ has length three, 
which is independent of a choice of right coset representatives. 
Hence,  
$v w v^{-1} \not\in G_1$.  \\

\noindent
\textbf{Case 2.} For every $f \in G_1$, $f w f^{-1} \in H$. \\

Let $N_1$ be the normal closure $\langle\!\langle  w \rangle\!\rangle_{G_1}$ of $w$ in $G_1$. 
Then $N_1 \subset H \cong \mathbb{Z} \oplus \mathbb{Z}$ is an abelian normal subgroup of $G_1$. 
Assume that  $N_1 \cong \mathbb{Z} \oplus \mathbb{Z}$ and it has finite index in $G_1$. 
Then \cite[Theorem 10.6]{Hem} shows that $E_1$ is either $S^1 \times S^1 \times [0, 1]$ or a twisted $I$--bundle over the Klein bottle. 
Either case cannot occur. 
So $N_1$ has infinite index in $G_1$. 
Then referring \cite{HemJaco} or \cite[p.118 (L9)]{AFW},  
$E_1$ would be the union of two twisted $I$--bundle over the Klein bottle, 
or a torus bundle over $S^{1}$. 
They are closed, a contradiction.  

Thus $N_1 \cong \mathbb{Z}$. 
By  \cite[p.118 (L9)]{AFW}, 
$E_1$ is Seifert fibered and $N_1$ is a Seifert fiber subgroup, a subgroup generated by a regular fiber of a Seifert fibration of $E_1$. 
Then $N_1$ is a central subgroup of $G_1$, 
and hence $w \in  \langle\!\langle w \rangle\!\rangle_{G_1} = N_1$ is central and 
$fwf^{-1}=w$ for all $f \in G_1$. 
This means that $N_1 = \langle w \rangle$. 

Now we show that there is an element $v \in G_2 \subset G(K)$ such that $v w v^{-1} \not \in G_1$. 
Assume to the contrary that for any element $v \in G_2$, $v w v^{-1} \in G_1$.  
Since $v \in G_2$ and $w \in H \subset G_2$, 
$v w v^{-1} \in G_2$. 
In particular, $v w v^{-1} \in G_1 \cap G_2 = H$. 
Let us consider the normal closure $N_2 = \langle\!\langle w \rangle\!\rangle_{G_2}$ of $w$ in $G_2$.  
Apply the above argument to see that $v w v^{-1}=w$ for all $v \in G_2$ and 
$N_2= \langle w \rangle$. 

This shows that $N_{1} = N_{2} =\langle w \rangle$. 
Since $w$ is central in both $G_1$ and $G_2$,  
$w$ is a central element in the entire group $G(K)$. 
However, this implies $g = u^{-1} w u = w$ is a central element, contradicting the choice of $g$. 
This completes a proof of Claim~\ref{torus knot}
\end{proof}

For $a = u^{-1}v u$, we have:
\[ a g a^{-1} 
= (u^{-1} v u) g (u^{-1} v u)^{-1}
= u^{-1} (v u g u^{-1} v^{-1}) u 
= u^{-1} (v w v^{-1}) u 
\not \in u^{-1} G_1 u. \]
Since $Z(r) \subset u^{-1} G_1 u$, 
we have $a g a^{-1} \not \in Z(r)$, 
as desired.
\end{proof}

Now we are ready to prove: 

\medskip

\begin{thm:finite_family}
Let $K$ be a nontrivial knot in $S^3$,  
and let $\{ r_1, \dots, r_n \}$ $(n\geq 2)$ be any finite family of slopes of $K$.  
If $K$ is  a torus knot $T_{p, q}$, 
we assume that $pq \not\in \{ r_1, \dots, r_n \}$. 
Then $\langle\!\langle r_1\rangle\!\rangle \cap \cdots \cap \langle\!\langle r_n\rangle\!\rangle$ is nontrivial.
Moreover, this subgroup is finitely generated if and only if all the $r_i$ are finite surgery slopes. 
\end{thm:finite_family}

\begin{proof} 
When $r_i$ is the trivial slope $\infty$ for some $1 \le i \le n$, 
then $\langle\!\langle r_i \rangle\!\rangle = G(K)$, 
so we assume that $r_i \neq \infty$ for all $1 \le i \le n$. 
We first show the nontriviality of the intersection.  
Let 
\[\Gamma_{0}(G(K)) \supset \Gamma_{1}(G(K)) \supset \cdots\]
be the lower central series of $G(K)$, 
defined inductively by 
\[\Gamma_{0}(G(K))=G(K)\quad \mathrm{and}\quad  \Gamma_{i}(G(K)) = [G(K),\Gamma_{i-1}(G(K))]\  (i>0).\]
Although it is known that $\Gamma_i(G(K)) = \Gamma_1(G(K)) = [G(K), G(K)]$ for $i \ge 1$ \cite[p.59]{Neu}, 
we use $\Gamma_i(G(K))$ for convenience in the following inductive construction of an element 
$g_{m} \in \Gamma_{m}(G(K))$ which belongs to 
$\bigcap_{i=1}^{m} \langle\!\langle r_i\rangle\!\rangle$ for $m=1,\ldots,n$.

For the case $m=1$, take $a_1 \in G(K)$ so that $a_1 \not \in Z(r_1)$ 
(by the assumption, $Z(r_1) \neq G(K)$ so such $a_1$ exists). 
Then $g_1=[r_1,\ a_1]= r_1 (a_{1}r_1^{-1}a_{1}^{-1})$ is a nontrivial element in $\Gamma_1(G(K)) = [G(K), G(K)]$ that belongs to $\langle\!\langle r_1 \rangle\!\rangle$ as desired.

Let us assume $m \ge 2$, 
and we have already found a nontrivial element $g_{m-1}$ in $\Gamma_{m-1}(G(K))$ such that 
$g_{m-1} \in \bigcap_{i=1}^{m-1} \langle\!\langle r_i\rangle\!\rangle$.

\begin{claim}
\label{non-central}
$g_{m-1}$ is not a central element in $G(K)$. 
\end{claim}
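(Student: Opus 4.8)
The plan is to reduce the claim to a single structural fact about how $g_{m-1}$ was produced: it is an iterated commutator, so it lies in the commutator subgroup $[G(K),G(K)]$, and in a knot group the identity is the only central element contained there. Concretely, since we are in the inductive step with $m \ge 2$, we have $g_{m-1} \in \Gamma_{m-1}(G(K)) \subseteq \Gamma_1(G(K)) = [G(K),G(K)]$. It therefore suffices to prove that $Z(K) \cap [G(K),G(K)] = \{1\}$, where $Z(K)$ is the center of $G(K)$. Granting this, the nontriviality of $g_{m-1}$ forces $g_{m-1} \notin Z(K)$, i.e.\ $g_{m-1}$ is non-central, which is exactly the claim.

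To verify $Z(K) \cap [G(K),G(K)] = \{1\}$ I would split into two cases according to whether $K$ is a torus knot. If $K$ is not a torus knot, then $Z(K) = \{1\}$ (the center of the group of a nontrivial, non-torus knot is trivial, cf.\ \cite[Theorem~2.5.2]{AFW}), and the intersection is automatically trivial. If $K = T_{p,q}$ is a torus knot, then $Z(K)$ is infinite cyclic, generated by a regular fiber $h$ of the Seifert fibration of $E(K)$; this $h$ represents the cabling slope $pq$, so its image in $H_1(E(K)) \cong \mathbb{Z}$ equals $pq$ times the meridian class and is thus nonzero. Hence the abelianization map $G(K) \to H_1(E(K))$ is injective on $Z(K) = \langle h \rangle$, and since $[G(K),G(K)]$ is precisely its kernel we again obtain $Z(K) \cap [G(K),G(K)] = \{1\}$.

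The argument is short, so there is no serious obstacle; the point worth flagging is that the claim follows purely from the membership $g_{m-1} \in [G(K),G(K)]$ and does \emph{not} use the standing hypothesis $pq \notin \{r_1,\dots,r_n\}$. That hypothesis is instead needed one step later, where Lemma~\ref{centralizer} is applied to the now-established non-central element $g_{m-1}$ and the nontrivial slope $r_m$ in order to produce a conjugate $a_m g_{m-1} a_m^{-1} \notin Z(r_m)$, and thence the next element $g_m = [a_m g_{m-1} a_m^{-1},\, r_m]$. Thus the purpose of the present claim is exactly to license that invocation of Lemma~\ref{centralizer}.
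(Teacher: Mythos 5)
Your proposal is correct and is essentially the paper's own argument: both rest on the membership $g_{m-1} \in \Gamma_{m-1}(G(K)) \subseteq [G(K),G(K)]$ together with the fact that a nontrivial center occurs only for torus knots, where it is generated by the regular fiber $t = \mu^{pq}\lambda$, whose image $pq[\mu]$ in $H_1(E(K)) \cong \mathbb{Z}$ is nonzero, contradicting triviality in the abelianization. Your packaging of this as $Z(K) \cap [G(K),G(K)] = \{1\}$, and your remark that the hypothesis $pq \notin \{r_1,\dots,r_n\}$ is used only in the subsequent application of Lemma~\ref{centralizer}, are both accurate but do not change the substance.
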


\begin{proof}
Assume for a contradiction that $g_{m-1}$ is a central element in $G(K)$. 
Then $K$ is a torus knot $T_{p, q}$ and $Z(K)$ is generated by a slope element $pq$, 
which is represented by regular fiber $t$ of the Seifert fibration of $E(T_{p, q})$; 
see \cite{BurdeZieschang} and \cite[Theorem~2.5.2]{AFW}. 
Thus $Z(K) = \langle pq \rangle$ and $g_{m-1} = t^{x}$ for some non-zero integer $x$. 
Since $g_{m-1} \in \Gamma_{m-1}(G(K)) \subset \Gamma_{1}(G(K)) = [G(K), G(K)]$, 
it should be trivial in $G(K) /  [G(K), G(K)] = H_1(E(K)) \cong \mathbb{Z}$. 
On the other hand, $t^{x} = (\mu^{pq} \lambda)^x$ becomes $\mu^{pqx} \in H_1(E(K))$, 
which is nontrivial, a contradiction. 
\end{proof}

Thus by Lemma \ref{centralizer} there is $a_{m} \in G(K)$ such that 
$a_m g_{m-1} a_m^{-1} \not \in Z(r_{m})$. 
Let us take 
\[g_m = [r_{m},\ a_m g_{m-1} a_m^{-1}] \in [G(K), \Gamma_{m-1}(G(K))] = \Gamma_m(G(K)).\] 
Since $a_m g_{m-1} a_m^{-1} \not \in Z(r_{m})$, $g_{m} \neq 1$. 
Obviously $g_m \in \langle\!\langle r_m \rangle\!\rangle$. 
Since $g_{m-1} \in \bigcap_{i=1}^{m-1} \langle\!\langle r_i\rangle\!\rangle$ 
and $\bigcap_{i=1}^{m-1} \langle\!\langle r_i\rangle\!\rangle$ is normal in $G(K)$, 
$a_m g_{m-1} a_m^{-1} \in  \bigcap_{i=1}^{m-1} \langle\!\langle r_i\rangle\!\rangle$ as well. 
Therefore $1 \neq g_m \in \bigcap_{i=1}^{m} \langle\!\langle r_i \rangle\!\rangle$. 

Next we determine when 
$\langle\!\langle r_1\rangle\!\rangle \cap \cdots \cap \langle\!\langle r_n\rangle\!\rangle$ 
is finitely generated.  
Assume that at least one of $r_i \in \{r_1, \dots, r_n\}$ is not a finite surgery slope. 
Without loss of generality, we may assume $r_1$ is not a finite surgery slope. 
By the assumption $r_1 \ne pq$ neither when $K = T_{p, q}$. 
Note that the above argument shows that $N = \bigcap_{i=1}^{n}\langle\!\langle r_i \rangle\!\rangle$ is nontrivial. 
Since $r_1$ is neither the cabling slope nor a finite surgery slope, 
Proposition~\ref{intersection_finitely_generated} shows that 
$N = N \cap \langle\!\langle r_1 \rangle\!\rangle$ is not finitely generated.

Conversely, 
assume that every $r_i$ is a finite surgery slope.
Then all $\langle\!\langle r_i \rangle\!\rangle$ are subgroup of finite index of $G(K)$, 
so is their intersections $\bigcap_{i=1}^{n}\langle\!\langle r_i \rangle\!\rangle$.  
Since $G(K)$ is finitely generated,  
so is its subgroup $\bigcap_{i=1}^{n}\langle\!\langle r_i \rangle\!\rangle$ of finite index.  
\end{proof}

In Theorem~\ref{thm:finite_family}, 
if $K$ is a torus knot,  
the assumption that $pq \not\in \{ r_1, \dots, r_n \}$ is essential. 
Actually, if $pq \in \{ r_1, \dots, r_n \}$, 
then $\langle\!\langle r_1\rangle\!\rangle \cap \cdots \cap \langle\!\langle r_n\rangle\!\rangle$ is generically trivial. 

\begin{proposition}
\label{reducing}
Let $K$ be a torus knot $T_{p, q}$ and $r$ a slope distinct from $pq$. 
\begin{itemize}
\item[(i)]
If $r$ is a finite surgery slope, 
then $\langle\!\langle pq \rangle\!\rangle \cap \langle\!\langle r \rangle\!\rangle\cong \mathbb{Z}$.
\item[(ii)]
If $r$ is not a finite surgery slope, 
then $\langle\!\langle pq \rangle\!\rangle \cap \langle\!\langle r \rangle\!\rangle = \{ 1 \}$. 
\end{itemize}
\end{proposition}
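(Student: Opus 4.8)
The key observation is that the cabling slope $pq$ is represented by a regular fiber $t$ of the Seifert fibration of $E(T_{p,q})$, and $t$ generates the center $Z(K) = \langle t \rangle \cong \mathbb{Z}$ of $G(K)$. Being central, $t$ has normal closure equal to the (normal) subgroup it generates, so $\langle\!\langle pq \rangle\!\rangle = \langle t \rangle \cong \mathbb{Z}$. Consequently $\langle\!\langle pq \rangle\!\rangle \cap \langle\!\langle r \rangle\!\rangle$ is a subgroup of the infinite cyclic group $\langle t \rangle$, hence automatically either trivial or infinite cyclic. The whole statement therefore reduces to deciding, in each case, whether some nonzero power $t^j$ lies in $\langle\!\langle r \rangle\!\rangle$; equivalently, whether the image $\bar t$ of $t$ in $\pi_1(K(r)) = G(K)/\langle\!\langle r \rangle\!\rangle$ has finite order. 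Note that $t$ itself has infinite order in $G(K)$, so $t^d \neq 1$ for every $d \geq 1$.

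For part (i), suppose $r$ is a finite surgery slope, so $\pi_1(K(r))$ is finite. Then $\bar t$ has some finite order $d \geq 1$, giving $t^d \in \langle\!\langle r \rangle\!\rangle$ with $t^d \neq 1$. Thus $\langle\!\langle pq \rangle\!\rangle \cap \langle\!\langle r \rangle\!\rangle$ is a nontrivial subgroup of $\langle t \rangle \cong \mathbb{Z}$, namely $\langle t^d \rangle$, so it is infinite cyclic as claimed.

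For part (ii), suppose $r$ is not a finite surgery slope, so $\pi_1(K(r))$ is infinite. Since $r \neq pq$ and the cabling slope is the only reducing surgery slope of a torus knot by Moser's classification of surgeries on torus knots, $K(r)$ is irreducible; being irreducible with infinite fundamental group it is aspherical, hence $\pi_1(K(r))$ is torsion-free, exactly as recalled in the proof of Lemma~\ref{power of meridian}. First I would check that $\bar t \neq 1$: otherwise $\langle\!\langle pq \rangle\!\rangle = \langle t \rangle \subset \langle\!\langle r \rangle\!\rangle$, i.e.\ $\langle\!\langle r \rangle\!\rangle \supset \langle\!\langle pq \rangle\!\rangle$ for the distinct slopes $r$ and $pq$, which by Proposition~\ref{inclusion} would force $r$ to be a finite surgery slope, contrary to assumption. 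Since $\bar t$ is then a nontrivial element of a torsion-free group, it has infinite order, so no nonzero power of $t$ lies in $\langle\!\langle r \rangle\!\rangle$. Therefore $\langle\!\langle pq \rangle\!\rangle \cap \langle\!\langle r \rangle\!\rangle = \langle t \rangle \cap \langle\!\langle r \rangle\!\rangle = \{ 1 \}$.

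The argument is short once this reduction is in place; the only external inputs are the identification $\langle\!\langle pq \rangle\!\rangle \cong \mathbb{Z}$, the irreducibility of non-cabling surgeries on torus knots, and the torsion-freeness of aspherical $3$--manifold groups already invoked earlier. The most delicate point is ruling out $\bar t = 1$ in part (ii), and I expect Proposition~\ref{inclusion} to dispose of it cleanly, with essentially no computation required.
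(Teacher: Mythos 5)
Your proof is correct and takes essentially the same route as the paper: both identify $\langle\!\langle pq \rangle\!\rangle$ with the infinite cyclic central subgroup $\langle t \rangle$ generated by the regular fiber, settle (i) by finiteness of the order of $t$ in the finite group $\pi_1(K(r))$, and settle (ii) by irreducibility of $K(r)$ for $r \neq pq$ plus torsion-freeness of aspherical $3$--manifold groups. The one cosmetic difference is in (ii): where you exclude $t \in \langle\!\langle r \rangle\!\rangle$ by citing Proposition~\ref{inclusion} (which is non-circular, as it is proved in Section~2 independently of this proposition), the paper instead applies torsion-freeness to the image of the generator $a$ in $G(K) = \langle a, b \mid a^p = b^q \rangle$, forcing $a = 1$ and hence a finite cyclic quotient, a contradiction; both arguments are sound.
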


\begin{proof}
Recall that the slope $pq$ is represented by a regular fiber $t$ of the Seifert fiber space $E(T_{p, q})$, 
and hence the infinite cyclic normal subgroup generated by $t$ coincides with $\langle\!\langle pq \rangle\!\rangle$. 
Hence $\langle\!\langle pq \rangle\!\rangle \cap \langle\!\langle r \rangle\!\rangle = \langle t^k \rangle \subset \langle t \rangle \cong \mathbb{Z}$ for some integer $k\ge 0$. 
Hence, $\langle\!\langle pq \rangle\!\rangle \cap \langle\!\langle r \rangle\!\rangle$ is either trivial or infinite cyclic. 

(i)
If $r$ is a finite surgery slope, 
$t$ has a finite order in the finite group $G(K)/\langle\!\langle r\rangle\!\rangle$.
Thus there exists an integer $ k' \ge 1$ such that $t^{k'} \in \langle\!\langle r\rangle\!\rangle$ in $G(K)$.
Thus 
$\langle\!\langle pq \rangle\!\rangle \cap \langle\!\langle r \rangle\!\rangle$ is nontrivial, 
and hence it is infinite cyclic. 

(ii)
Recall that $G(K) = \langle a, b\ |\ a^p = b^q \rangle$ and $t = a^p = b^q$. 
Thus $t^k = (a^p)^k = a^{pk} \in \langle\!\langle r \rangle\!\rangle$. 
This means that $a^{pk} = 1$ in $\pi_1(K(r))$. 
Since $r$ is neither a reducing surgery slope nor a finite surgery slope, 
$\pi_1(K(r))$ has no torsion element. 
Therefore $a = 1$ in $\pi_1(K(r))$, unless $k=0$. 
However, this implies that $G(K)$ is finite cyclic, 
a contradiction. 
Thus $k=0$, so $\langle\!\langle pq \rangle\!\rangle \cap \langle\!\langle r \rangle\!\rangle=\{1\}$.
\end{proof}

\begin{corollary}
\label{cor:finite_family}
Let $K$ be a hyperbolic knot in $S^3$ which is not the $(-2, 3, 7)$--pretzel knot.  
Then for any finite family of slopes $r_1, \dots, r_n \in \mathbb{Q}$ with $n > 2$, 
the subgroup $\cap_{i = 1}^n \langle\!\langle r_i \rangle\!\rangle$ is not finitely generated.
\end{corollary}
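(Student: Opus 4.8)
The plan is to reduce the assertion to a counting statement about finite surgery slopes and then feed in the classification of finite surgeries on hyperbolic knots. Since $K$ is hyperbolic, it is not a torus knot, so the hypothesis ``$pq \notin \{ r_1, \dots, r_n \}$'' in Theorem~\ref{thm:finite_family} is vacuously satisfied and that theorem applies. It asserts that $\bigcap_{i=1}^{n} \langle\!\langle r_i \rangle\!\rangle$ is finitely generated if and only if every $r_i$ is a finite surgery slope. Hence, to prove that this intersection is \emph{not} finitely generated, it suffices to show that the $n > 2$ mutually distinct slopes $r_1, \dots, r_n$ cannot all be finite surgery slopes; equivalently, that $K$ possesses at most two finite surgery slopes.

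The essential input is therefore the classification of finite Dehn surgeries on hyperbolic knots. The cyclic surgery theorem forces cyclic surgery slopes to be integral and shows that there are at most two of them, namely consecutive integers, while by \cite{NZ_finite} any two finite surgery slopes of a hyperbolic knot have distance at most two. I would combine these with the complete enumeration of finite surgeries to conclude that a hyperbolic knot carries at most two finite surgery slopes, the sole exception being the $(-2, 3, 7)$--pretzel knot: its slopes $18$ and $19$ yield the lens spaces $L(18,5)$ and $L(19,8)$, while its slope $17$ yields a spherical space form Seifert fibered over $S^2(2,3,5)$, producing three distinct finite surgery slopes. As $K$ is assumed not to be the $(-2, 3, 7)$--pretzel knot, $K$ has at most two finite surgery slopes.

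With at most two finite surgery slopes available but $n > 2$ mutually distinct slopes $r_1, \dots, r_n$ in hand, at least one $r_i$ fails to be a finite surgery slope. By the equivalence supplied by Theorem~\ref{thm:finite_family}, the subgroup $\bigcap_{i=1}^{n} \langle\!\langle r_i \rangle\!\rangle$ is then not finitely generated, which is exactly the desired conclusion.

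The main obstacle is precisely the sharp bound of two finite surgery slopes, with the $(-2, 3, 7)$--pretzel knot as the unique exception. This is the genuinely deep, non-elementary ingredient: it rests on the cyclic surgery theorem, the distance estimate of \cite{NZ_finite}, and the Floer-theoretic and character-variety machinery behind the full classification of finite surgeries on hyperbolic knots. By contrast, all of the group-theoretic content required for the corollary has already been absorbed into Theorem~\ref{thm:finite_family}, so the remaining task is purely an application of this classification together with the bookkeeping above.
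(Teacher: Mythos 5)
Your proposal is correct and takes essentially the same route as the paper: the published proof likewise deduces the corollary immediately from Theorem~\ref{thm:finite_family} combined with \cite[Theorem~1.4]{NZ_finite}, which says a hyperbolic knot has at most three nontrivial finite surgeries, and at most two unless it is the $(-2,3,7)$--pretzel knot. The additional machinery you invoke (the cyclic surgery theorem, the distance-two bound, the explicit slopes $17$, $18$, $19$) is already packaged inside that single citation, so your argument is just a slightly more expanded version of the paper's.
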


\begin{proof}
Since $K$ is a hyperbolic knot, 
\cite[Theorem~1.4]{NZ_finite} shows that it has at most three nontrivial finite surgeries, 
and except when $K$ is the $(-2, 3, 7)$--pretzel knot, it has at most two such surgeries. 
Thus the result follows from Theorem~\ref{thm:finite_family} immediately. 
\end{proof}

\begin{example}
\label{P(-2,3,7)}
Let $K$ be the $(-2, 3, 7)$--pretzel knot. 
Then it has three nontrivial finite surgery slopes: $17, 18$ and $19$. 
Hence, $\langle\!\langle 17 \rangle\!\rangle \cap \langle\!\langle 18 \rangle\!\rangle \cap 
\langle\!\langle 19 \rangle\!\rangle$ is a finitely generated nontrivial normal subgroup of $G(K)$. 
\end{example}

\section{Chains of normal closures of slopes}

Applying Propositions~\ref{inclusion} and \ref{non_finite_finite} we have: 

\medskip

\begin{thm:no_infinite_chain}
Let $K$ be a non-torus knot in $S^3$. 
If $\langle\!\langle r_1 \rangle\!\rangle \supset \cdots \supset \langle\!\langle r_n \rangle\!\rangle$
for mutually distinct slopes $r_1, \dots, r_n \in \mathbb{Q}$,  
then $n \le 2$.  
In particular, there is no infinite descending chain nor ascending chain of normal closures of slopes. 
\end{thm:no_infinite_chain}

\begin{proof}
Assume for a contradiction that 
we have $\langle\!\langle r_1 \rangle\!\rangle \supset \langle\!\langle r_2 \rangle\!\rangle \supset \langle\!\langle r_3 \rangle\!\rangle$ 
for mutually distinct slopes $r_1, r_2$ and $r_3$. 
Then by Proposition~\ref{inclusion}, $r_1$ and $r_2$ are both finite surgery slopes. 
However, this contradicts Proposition~\ref{non_finite_finite}. 
\end{proof}

As shown in Proposition~\ref{inclusion}, 
the inclusion $\langle\!\langle r \rangle\!\rangle \supset \langle\!\langle r' \rangle\!\rangle$
can occur only when $r$ is a finite surgery slope. 
In fact, for a given finite surgery slope $r$, we can find infinitely many slopes $r_k$ so that 
$\langle\!\langle r \rangle\!\rangle \supset \langle\!\langle r_k \rangle\!\rangle$.

\begin{proposition}
\label{finite_finite}
Let $K$ be a nontrivial knot with finite surgery slope $m/n\in \mathbb{Q}$.
Let $f$ be the order of the meridian $\mu$ in the finite group $G(K)/\langle\!\langle m/n \rangle\!\rangle$.
Then
\[
\langle\!\langle m/n \rangle\!\rangle \supset \langle\!\langle m/kn \rangle\!\rangle
\]
for  any non-zero integer $k$ such that $\gcd(k,m)=1$ and $k\equiv 1\pmod{f}$.
If $m/n$ is a cyclic surgery slope, then the last condition $k\equiv 1\pmod{f}$ is redundant.
\end{proposition}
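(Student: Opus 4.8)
The plan is to reduce the inclusion $\langle\!\langle m/n \rangle\!\rangle \supset \langle\!\langle m/kn \rangle\!\rangle$ to a one-line computation inside the finite quotient $Q = G(K)/\langle\!\langle m/n \rangle\!\rangle = \pi_1(K(m/n))$. Since $\langle\!\langle m/kn \rangle\!\rangle$ is by definition the normal closure of the slope element $\mu^m \lambda^{kn}$, and since $\langle\!\langle m/n \rangle\!\rangle$ is already normal in $G(K)$, it suffices to show that $\mu^m \lambda^{kn}$ lies in $\langle\!\langle m/n \rangle\!\rangle$, equivalently that its image in $Q$ is trivial. First I would record that $\gcd(k,m)=1$ together with $\gcd(m,n)=1$ forces $\gcd(m,kn)=1$, so that $m/kn$ is genuinely a slope and $\mu^m \lambda^{kn}$ is a legitimate slope element.

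The key computation exploits that $\mu$ and $\lambda$ commute (they lie in the abelian peripheral subgroup $P(K)$) and that the defining slope element dies in $Q$. Writing $\bar{\mu}, \bar{\lambda}$ for the images of $\mu, \lambda$, the membership $\mu^m \lambda^n \in \langle\!\langle m/n \rangle\!\rangle$ yields $\bar{\lambda}^n = \bar{\mu}^{-m}$ in $Q$, and therefore
\[ \overline{\mu^m \lambda^{kn}} = \bar{\mu}^m (\bar{\lambda}^n)^k = \bar{\mu}^m \bar{\mu}^{-mk} = \bar{\mu}^{m(1-k)}. \]
As $f$ is the order of $\bar{\mu}$ in $Q$, this image is trivial exactly when $f \mid m(1-k)$; the hypothesis $k \equiv 1 \pmod{f}$ gives $f \mid (1-k)$, hence $f \mid m(1-k)$, which settles the general statement.

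For the refinement I would argue that when $m/n$ is a cyclic surgery slope the congruence becomes superfluous. In that case $Q$ is a finite cyclic group, and since the meridian $\mu$ normally generates $G(K)$, its image $\bar{\mu}$ normally generates $Q$; because $Q$ is abelian this means $\bar{\mu}$ generates $Q$ outright, so its order is $f = |Q| = |H_1(K(m/n))| = |m|$. Consequently $\bar{\mu}^{m(1-k)} = (\bar{\mu}^m)^{1-k} = 1$ for \emph{every} admissible $k$, and the condition $k \equiv 1 \pmod{f}$ is indeed redundant.

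Since the core argument is a direct manipulation in the finite group $Q$, I expect no serious obstacle; the only points requiring care are bookkeeping. I would make sure that (a) $m/kn$ is already in lowest terms, so that the intended slope element is precisely $\mu^m \lambda^{kn}$, and (b) in the cyclic case the identification $f = |m|$ is properly justified from the fact that $\mu$ normally generates $G(K)$ together with $|H_1(K(m/n))| = |m|$. Everything else follows from the single computation above.
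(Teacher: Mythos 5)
Your proof is correct and follows essentially the same route as the paper's: reduce to showing the slope element $\mu^m\lambda^{kn}$ dies in the finite quotient $G(K)/\langle\!\langle m/n\rangle\!\rangle$, use $\bar\lambda^n=\bar\mu^{-m}$ to rewrite it as $\bar\mu^{m(1-k)}$, and kill it via $k\equiv 1\pmod f$ in general or via $f=|m|$ in the cyclic case. Your added bookkeeping (checking $\gcd(m,kn)=1$ so that $\mu^m\lambda^{kn}$ really is the slope element for $m/kn$, and justifying $f=|m|$ from normal generation by the meridian) is a small refinement of details the paper leaves implicit, not a different argument.
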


\begin{proof}
In $G(K)/\langle\!\langle m/n \rangle\!\rangle$,
we have $\mu^m\lambda^n=1$, hence $\lambda^n=\mu^{-m}$.
Then the slope element $\mu^m\lambda^{kn}$
is equal to $\mu^{m(1-k)}$ there.
If $m/n$ is a cyclic surgery slope, then $\mu^m=1$, since $G(K)/\langle\!\langle m/n \rangle\!\rangle=\mathbb{Z}_{|m|}$.
Otherwise, $\mu^{k-1}=1$, since $k\equiv 1\pmod{f}$ and $\mu^{f} = 1$ in $G(K)/\langle\!\langle m/n \rangle\!\rangle$. 
Thus $\mu^m\lambda^{kn} = \mu^{m(1-k)} = 1$ in $G(K) / \langle\!\langle m/n \rangle\!\rangle$, 
i.e. $\mu^m\lambda^{kn} \in \langle\!\langle m/n \rangle\!\rangle$. 
Hence $\langle\!\langle m/kn \rangle\!\rangle \subset \langle\!\langle m/n \rangle\!\rangle$.  
\end{proof}

\begin{example}
Among hyperbolic Montesinos knots, the $(-2,3,7)$-pretzel knot
and $(-2,3,9)$-pretzel knot are the only ones that admit nontrivial finite surgeries \cite{IJ}.
To determine the order of the meridian in the resulting finite group, 
we used presentations given in \cite{Na} and \texttt{GAP} (Groups, Algorithms, Programming) \cite{GAP}.

(1) Let $K$ be the $(-2, 3, 7)$-pretzel knot. 
Then $18$ and $19$ are cyclic surgery slopes. 
Hence, by Proposition~\ref{finite_finite}, 
for any non-zero integer $k, k'$ such that $\gcd(k, 18)=1$ and $\gcd(k', 19)=1$,
\[
\langle\!\langle 18 \rangle\!\rangle \supset \langle\!\langle 18/k \rangle\!\rangle,\quad 
\langle\!\langle 19 \rangle\!\rangle \supset \langle\!\langle 19/k' \rangle\!\rangle. 
\]
Also, 
we may observe that the slope $17$ is a finite, non-cyclic surgery slope using Montesinos trick \cite{Mon}. 
Precisely, $K(17)$ is a Seifert fibered manifold with Seifert invariant $S^2(0; 1/3, -2/5, -1/2)$; 
see also \cite{EM}, \cite[Proposition~5.6]{EM2} and \cite[4.2]{DEMM} for some corrections of mistakes in Proposition~5.6 in \cite{EM2}.  
Hence its fundamental group is $I_{120}\times \mathbb{Z}_{17}$, 
where $I_{120}$ is the binary icosahedral group of order 120.
In the finite group $G(K)/\langle\!\langle 17 \rangle \!\rangle$ of order $2040$,
the order of the meridian is $170$.
Hence for any non-zero integer $k$ such that $\gcd(k,17)=1$ and $k\equiv 1\pmod{170}$,
\[
\langle\!\langle 17 \rangle\!\rangle \supset \langle\!\langle 17/k \rangle\!\rangle.
\]
(2) Let $K$ be the $(-2, 3, 9)$-pretzel knot.
It has two non-cyclic finite surgery slopes $22$ and $23$. 
Seifert invariants of Seifert fibered manifolds obtained by $22$ and $23$ surgeries on $K$ were given by \cite{BH}, 
but it contains miscalculations, so we give their corrections below. 
Using Montesinos trick, 
we observe that $K(22)$ is a Seifert fibered manifold with Seifert invariant $S^2(0; 1/2, -1/4, 2/3)$.  
Its fundamental group is $O_{48}\times \mathbb{Z}_{11}$, 
where $O_{48}$ is the binary octahedral group of order $48$.
In the finite group $G(K)/\langle\!\langle 22\rangle \!\rangle$ of order $528$, 
the order of the meridian is $44$. 
Similarly, we observe that $K(23)$ is a Seifert fibered manifold with Seifert invariant $S^2(0; 1/2, 2/3, -2/5)$,  
whose fundamental group is $I_{120}\times \mathbb{Z}_{23}$ of order $2760$. 
The order of the meridian is $138$ in this group.
Hence for any non-zero integers $k$, $k'$ such that $\gcd(k,528)=1$ and $k\equiv 1\pmod{44}$,
$\gcd(k',2760)=1$ and $k'\equiv 1 \pmod{138}$, 
\[
\langle\!\langle 22 \rangle\!\rangle \supset \langle\!\langle 22/k \rangle\!\rangle,\quad 
\langle\!\langle 23 \rangle\!\rangle \supset \langle\!\langle 23/k' \rangle\!\rangle. 
\]
\end{example}

In the rest of this section, we focus on normal closures of slopes for torus knots.

\begin{theorem}
\label{no_ascending}
Let $K$ be a torus knot $T_{p,q}$. 
Then there is no infinite ascending chain 
$\langle\!\langle r \rangle\!\rangle \subset  \langle\!\langle r_1 \rangle\!\rangle \subset \langle\!\langle r_2 \rangle\!\rangle \subset \cdots$. 
\end{theorem}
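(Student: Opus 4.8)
The plan is to exploit the elementary fact that an \emph{ascending} chain of normal subgroups induces a sequence of quotient \emph{surjections}, and then to invoke the well-ordering of the positive integers. Suppose, for a contradiction, that there is an infinite ascending chain
\[
\langle\!\langle r \rangle\!\rangle \subset \langle\!\langle r_1 \rangle\!\rangle \subset \langle\!\langle r_2 \rangle\!\rangle \subset \cdots,
\]
which I read as having infinitely many proper steps. Set $r_0 := r$ for bookkeeping.

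First I would observe that a proper containment $\langle\!\langle r_{i-1} \rangle\!\rangle \subsetneq \langle\!\langle r_i \rangle\!\rangle$ forces the normal closures, and hence the slopes $r_{i-1}$ and $r_i$, to be distinct. Applying Proposition~\ref{inclusion} with $r_i$ as the larger slope then forces $r_i$ to be a finite surgery slope for every $i \geq 1$. Consequently $\pi_1(K(r_i)) = G(K)/\langle\!\langle r_i \rangle\!\rangle$ is a finite group for all $i \geq 1$.

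Next, each proper inclusion $\langle\!\langle r_i \rangle\!\rangle \subsetneq \langle\!\langle r_{i+1} \rangle\!\rangle$ yields a canonical epimorphism
\[
\pi_1(K(r_i)) = G(K)/\langle\!\langle r_i \rangle\!\rangle \twoheadrightarrow G(K)/\langle\!\langle r_{i+1} \rangle\!\rangle = \pi_1(K(r_{i+1}))
\]
whose kernel $\langle\!\langle r_{i+1} \rangle\!\rangle / \langle\!\langle r_i \rangle\!\rangle$ is nontrivial. Thus it is a proper surjection of finite groups, so $|\pi_1(K(r_{i+1}))| < |\pi_1(K(r_i))|$. Therefore the orders $|\pi_1(K(r_1))| > |\pi_1(K(r_2))| > \cdots$ would form a strictly decreasing infinite sequence of positive integers, which is absurd. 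This contradiction rules out any infinite ascending chain.

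The argument is short, and the only genuine content is the direction of the induced maps: because the chain ascends, the proper inclusions give proper \emph{surjections} of the finite quotients rather than injections, and a proper surjection of finite groups must strictly drop the order. This is exactly what separates the ascending case from the descending one, where Proposition~\ref{finite_finite} and Theorem~\ref{thm:descending_sequence}(ii) show that infinite descending chains genuinely exist for torus knots; so one cannot simply count finite surgery slopes as in the non-torus case of Theorem~\ref{thm:no_infinite_chain}. I expect the only step requiring care is confirming that every term beyond the first is a finite surgery slope, so that all the quotients $\pi_1(K(r_i))$ appearing in the tail of the chain are finite; this is precisely the role played by Proposition~\ref{inclusion}.
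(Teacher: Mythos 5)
Your proof is correct, but it takes a genuinely different and more elementary route than the paper's. The paper also starts from Proposition~\ref{inclusion} to conclude that every $r_i$ \(i \ge 1\) is a finite surgery slope, but it then exploits finiteness only at the level of homology: the canonical epimorphisms $G(K)/\langle\!\langle r_i \rangle\!\rangle \twoheadrightarrow G(K)/\langle\!\langle r_{i+1} \rangle\!\rangle$ induce epimorphisms $\mathbb{Z}_{m_i} \to \mathbb{Z}_{m_{i+1}}$, so the numerators $m_i$ divide each other and eventually stabilize at some $m$; the torus-knot-specific classification of finite surgeries (the base orbifold of $T_{p,q}(m/n)$ is $S^2(p,q,|pqn-m|)$, forcing $|pqn_i - m| \in \{1,2,3,4,5\}$, as in Case~2 of the proof of Theorem~\ref{thm:peripheral Magnus property}) then leaves only finitely many admissible denominators $n_i$, contradicting the distinctness of the slopes. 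You instead use the full strength of finiteness of the quotients $\pi_1(K(r_i))$: each proper inclusion of normal closures gives a surjection of finite groups with nontrivial kernel, hence $|\pi_1(K(r_1))| > |\pi_1(K(r_2))| > \cdots$, impossible by well-ordering. Your argument is cleaner --- no Seifert invariants and no case analysis --- and, notably, it never uses that $K$ is a torus knot, so it shows that \emph{no} nontrivial knot group admits an infinite strictly ascending chain of normal closures of slopes, uniformly recovering the ascending half of Theorem~\ref{thm:no_infinite_chain} as well (the paper states the result only for torus knots since non-torus knots already satisfy the stronger bound $n \le 2$ there). The one point of interpretation --- that ``infinite ascending chain'' means infinitely many proper steps, so consecutive slopes are distinct and Proposition~\ref{inclusion} applies at each step, and the degenerate term $r_i = \infty$ with $\pi_1(K(\infty)) = \{1\}$ causes no trouble --- you address explicitly, and it is handled the same way implicitly in the paper.
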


\begin{proof}
Suppose for a contradiction that there is an infinite ascending chain
\[\langle\!\langle r \rangle\!\rangle \subset  \langle\!\langle r_1 \rangle\!\rangle \subset \langle\!\langle r_2 \rangle\!\rangle \subset \cdots.\] 
By Proposition~\ref{inclusion}, all $r_i$ but $r$ are finite surgery slopes.
Let $r_i=m_i/n_i$ for $i\ge 1$.
We may assume that $m_i > 0$.
Since $\langle\!\langle r_i \rangle\!\rangle \subset  \langle\!\langle r_{i+1} \rangle\!\rangle$,  
we have a canonical epimorphism 
\[G(K) /  \langle\!\langle r_i \rangle\!\rangle \to G(K) / \langle\!\langle r_{i+1}\rangle\!\rangle.\]  
This induces an epimorphism
\[H_1(K(m_i/n_i )) \cong \mathbb{Z}_{m_i} \to H_1(K(m_{i+1}/n_{i+1} )) \cong \mathbb{Z}_{m_{i+1}}.\] 
Thus we have an infinite sequence 
$m_1 \ge m_2 \ge \cdots \ge m_i \ge m_{i+1} \ge \cdots$, 
where $m_i$ is divided by $m_{i+1}$. 
Hence there is a constant $N > 0$ such that 
$m_{i+1} = m_i$ for $i \ge N$; 
we denote this constant $m$. 
As stated in Case 2 of the proof of Theorem \ref{thm:peripheral Magnus property},
$|pqn_i-m|\in \{1,2,3,4,5\}$.
This implies that there are only finitely many possibilities for $n_i$, a contradiction.
\end{proof}

\begin{theorem}
\label{infinite_descending_sequence}
Let $K$ be a torus knot $T_{p,q}\ (p>q\ge 2)$.
For each finite surgery slope $r\in \mathbb{Q}$,
there exists an infinite descending chain
\[
\langle\!\langle r \rangle\!\rangle \supset \langle\!\langle r_1 \rangle\!\rangle\supset 
\langle\!\langle r_2 \rangle\!\rangle\supset \cdots.
\]
\end{theorem}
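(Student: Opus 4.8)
The plan is to realize the entire descending chain by infinitely many finite surgery slopes that all share the same ``singular value'' $\epsilon=pqn-m$ as the given slope $r=m/n$, and to detect the inclusions by pushing slope elements into the central cyclic subgroup generated by the regular fiber. First I would record the standing facts about torus knots: the cabling slope $pq$ is represented by a regular fiber $t=\mu^{pq}\lambda$, which is central in $G(K)$ and generates $\langle\!\langle pq\rangle\!\rangle\cong\mathbb{Z}$, and $T_{p,q}(m/n)$ is Seifert fibered over $S^2(p,q,|\epsilon|)$ with $\epsilon=pqn-m$; thus a slope $m/n\ (\ne pq)$ is a finite surgery slope exactly when $(p,q,|\epsilon|)$ is a spherical triple, in particular $|\epsilon|\in\{1,2,3,4,5\}$, as already used in the proofs of Theorems~\ref{thm:peripheral Magnus property} and \ref{no_ascending}. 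Writing the given finite surgery slope as $r=m/n$ and setting $\epsilon=pqn-m$, I note that $\gcd(\epsilon,n)=\gcd(m,n)=1$ and $|\epsilon|\ge 1$.

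The key computation is the following. Using $\lambda=\mu^{-pq}t$ and the centrality of $t$, any slope element $\mu^{m'}\lambda^{n'}$ with $pqn'-m'=\epsilon$ equals $\mu^{-\epsilon}t^{n'}$ in $G(K)$. Hence, for two slopes $s=m_s/n_s$ and $s'=m_{s'}/n_{s'}$ with the common value $\epsilon$, reducing modulo $\langle\!\langle s\rangle\!\rangle$ and using the defining relation $\mu^{-\epsilon}t^{n_s}=1$ in $G(K)/\langle\!\langle s\rangle\!\rangle$, I find that the slope element of $s'$ becomes $t^{\,n_{s'}-n_s}$ there. By Proposition~\ref{reducing}(i) the image $\bar t$ has finite order $d_s\ge 1$ in the finite group $G(K)/\langle\!\langle s\rangle\!\rangle$, so the slope element of $s'$ lies in $\langle\!\langle s\rangle\!\rangle$ --- equivalently $\langle\!\langle s'\rangle\!\rangle\subseteq\langle\!\langle s\rangle\!\rangle$ --- precisely when $d_s\mid(n_{s'}-n_s)$.

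With this criterion I would build the chain inductively. Put $n_0=n$ and $r_0=r$, and, assuming $r_i=m_i/n_i$ has been constructed with $pqn_i-m_i=\epsilon$ and $\gcd(\epsilon,n_i)=1$, let $d_i\ge 1$ be the order of $\bar t$ in $G(K)/\langle\!\langle r_i\rangle\!\rangle$ and set $n_{i+1}=n_i+d_i|\epsilon|$ and $m_{i+1}=pqn_{i+1}-\epsilon$. Then $n_{i+1}\equiv n_i\pmod{|\epsilon|}$ keeps $\gcd(\epsilon,n_{i+1})=1$, so $r_{i+1}=m_{i+1}/n_{i+1}$ is a genuine slope with the same $\epsilon$ and hence the same spherical base orbifold, i.e. again a finite surgery slope; and $n_{i+1}\equiv n_i\pmod{d_i}$ gives $\langle\!\langle r_{i+1}\rangle\!\rangle\subseteq\langle\!\langle r_i\rangle\!\rangle$ by the criterion. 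Since $n_{i+1}>n_i$, the slopes $r_i$ are pairwise distinct, so the peripheral Magnus property (Theorem~\ref{thm:peripheral Magnus property}) upgrades each inclusion to a proper one, producing the desired infinite descending chain below $\langle\!\langle r\rangle\!\rangle$.

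The main obstacle is not any single hard step but the bookkeeping that makes the induction self-sustaining: one must keep $\gcd(\epsilon,n_i)=1$, so that each $m_i/n_i$ is an honest slope, while simultaneously arranging the congruence $n_{i+1}\equiv n_i\pmod{d_i}$ that forces the inclusion, and one must know that the fiber order $d_i$ is finite and positive, which is exactly the content of Proposition~\ref{reducing}(i). Choosing the single increment $d_i|\epsilon|$ reconciles both congruences at once, and the cyclic case $|\epsilon|=1$ --- where no coprimality constraint is present and $d_i$ equals the order $m_i$ of $\bar t$ in the cyclic group $\mathbb{Z}_{m_i}$ --- serves as a reassuring cross-check, since there the recipe reduces to the familiar divisibility $m_i\mid m_{i+1}$ of the homological orders.
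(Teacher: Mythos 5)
Your proof is correct, and it rests on the same core strategy as the paper's proof of Theorem~\ref{infinite_descending_sequence}: stay within the one-parameter family of slopes sharing the value $\epsilon = pqn-m$, so that the base orbifold $S^2(p,q,|\epsilon|)$ --- and hence finiteness of the surgered manifold's fundamental group --- is preserved, and then certify each inclusion by showing that the next slope element dies in the finite quotient because a suitable element order divides the exponent that appears. The execution differs in a way worth recording. The paper argues in two cases: for cyclic slopes it takes $r_1 = pq+\frac{1}{(pq+1)n+1}$ and uses triviality of $\lambda$ in the abelian quotient, while for non-cyclic finite slopes it takes $n_1 = n(f+1)$ with $f$ the order of the meridian $\bar\mu$ and substitutes $\lambda^n = \mu^{-m}$. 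You instead route everything through the central regular fiber $t = \mu^{pq}\lambda$, rewriting every slope element with parameter $\epsilon$ as $\mu^{-\epsilon}t^{n'}$, which turns the inclusion into the single uniform criterion $d_i \mid (n_{i+1}-n_i)$, where $d_i$ is the order of $\bar t$ in $G(K)/\langle\!\langle r_i \rangle\!\rangle$ (finite by Proposition~\ref{reducing}(i), or simply because the quotient is finite); in the cyclic case your increment $n_{i+1} = n_i + d_i$ coincides numerically with the paper's choice, since there $d_i = |m_i|$. Your version also buys two points of rigor that the paper leaves implicit: the invariant $\gcd(\epsilon, n_i)=1$, preserved by the increment $d_i|\epsilon|$, guarantees $\gcd(m_{i+1}, n_{i+1})=1$, i.e.\ that each $r_{i+1}$ is an honest slope --- by contrast, the paper's non-cyclic formula gives $\gcd(m_1,n_1)>1$ exactly when $\gcd(d,f+1)>1$, a coprimality condition the paper never checks --- and you invoke the peripheral Magnus property (Theorem~\ref{thm:peripheral Magnus property}) to make the inclusions strict, which the paper takes for granted. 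The only micro-detail you could add is that $n_{i+1}\ne 0$: for $|\epsilon|\ge 2$ your coprimality invariant already forbids it, and for $|\epsilon|=1$ it is automatic because $d_i = |m_i| = |pqn_i - \epsilon| > |n_i|$.
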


\begin{proof}
Assume first that $r$ is a cyclic surgery slope.
Then $r=pq+\frac{1}{n}$ for some non-zero integer $n$.
Set $r_1=pq+\frac{1}{(pq+1)n+1}$.
We show that 
$\langle\!\langle r \rangle\!\rangle \supset \langle\!\langle r_1 \rangle\!\rangle$. 

Using standard meridian-longitude pair $(\mu,\ \lambda)$ of $K$,  
$r$ is represented by $\mu^{pqn+1}\lambda^n$ 
and $r_1$ is represented by $\mu^{pq((pq+1)n+1)+1}\lambda^{(pq+1)n+1}$. 
Since $G(K)/\langle\!\langle r \rangle\!\rangle$ is abelian, 
the longitude $\lambda$ is trivial in $G(K)/\langle\!\langle r\rangle\!\rangle$, 
and hence, 
\[
\mu^{pqn+1}\lambda^n = \mu^{pqn+1},
\]   
\[
\mu^{pq((pq+1)n+1)+1}\lambda^{(pq+1)n+1} = \mu^{pq((pq+1)n+1)+1} = \mu^{(pq+1)(pqn+1)}
\]
in $G(K)/\langle\!\langle r\rangle\!\rangle$. 
Since $ \mu^{pqn+1} = 1$ in $G(K)/\langle\!\langle r\rangle\!\rangle$, 
so is $\mu^{(pq+1)(pqn+1)} = 1$. 
Thus $r_1 \in \langle\!\langle r\rangle\!\rangle$. 
We remark that $r_1$ remains a cyclic surgery slope.
Repeat this process to obtain a desired infinite descending chain.

Next assume that $r$ is a non-cyclic finite surgery slope $m/n$.
This is possible only for $(p,q,|pqn-m|)=(p,2,2)$, $(3,2,3)$, $(3,2,4)$, $(3,2,5)$,
$(5,2,3)$, $(5,3,2)$.
Put $d = pqn-m$, 
and let $(p,q,|d|)$ be one of these triples.
Set $r_1= \frac{m_1}{n_1} =  \frac{m(f+1)+df}{n(f+1)}$, 
where $f$ is the order of the meridian $\mu$ in the finite group $G(K)/\langle\!\langle r\rangle\!\rangle$.
Then we show that 
$\langle\!\langle r \rangle\!\rangle \supset \langle\!\langle r_1 \rangle\!\rangle$. 
In $G(K)/\langle\!\langle r \rangle\!\rangle$, $\mu^m\lambda^n=1$, so $\lambda^n=\mu^{-m}$.
Hence the slope element $\mu^{m_1}\lambda^{n_1} = \mu^{m(f+1)+df}\lambda^{n(f+1)}$ is equal to
$\mu^{m(f+1)+df-m(f+1)}=\mu^{df}=1$ in $G(K)/\langle\!\langle r \rangle\!\rangle$, 
so the slope element $\mu^{m_1}\lambda^{n_1}$ representing $r_1$  belongs to $\langle\!\langle r\rangle\!\rangle$. 
This means that $\langle\!\langle r_1 \rangle\!\rangle \subset \langle\!\langle r \rangle\!\rangle$. 
We remark that $pqn_1 - m_1 = d = pqn-m$. 
Then applying the above argument  to $r_2 = \frac{m_2}{n_2} = \frac{m_1(f_1 + 1) + d f_1}{n_1(f_1 + 1)}$, 
where $f_1$ is the order of $\mu$ in $G(K) / \langle\!\langle r_1 \rangle\!\rangle$, 
we see that $r_2$ belongs to $\langle\!\langle r_1 \rangle\!\rangle$. 
Repeat this process to obtain an infinite descending chain. 
\end{proof}

Theorem~\ref{thm:descending_sequence} follows from Theorems~\ref{no_ascending} and 
\ref{infinite_descending_sequence}.

\begin{proposition}
\label{cyclic_commutator}
Let $K$ be a torus knot $T_{p, q}$ $(p > q \ge 2)$. 

\begin{itemize}
\item[(i)]
For the infinite descending chain \[
\langle\!\langle pq+\frac{1}{n} \rangle\!\rangle \supset \langle\!\langle pq+\frac{1}{(pq+1)n+1} \rangle\!\rangle\supset 
 \cdots
\]
consisting of cyclic surgery slopes,
the intersection of these normal closures is the commutator subgroup $[G(K),G(K)]$ of $G(K)$.

\item[(ii)]
For the set of cyclic surgery slopes $\mathcal{S}$, 
$\bigcap_{r \in \mathcal{S}} \langle\!\langle r \rangle\!\rangle = [G(K), G(K)]$. 
\end{itemize}
\end{proposition}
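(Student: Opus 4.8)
The plan is to reduce both statements to a single homological observation. Write $\mathrm{ab}\colon G(K)\to H_1(E(K))\cong\mathbb{Z}$ for the abelianization, under which the meridian $\mu$ maps to a generator while the longitude $\lambda$ maps to $0$; thus $\ker(\mathrm{ab})=[G(K),G(K)]$. For each nonzero integer $n$ the slope $pq+\frac1n=(pqn+1)/n$ is a cyclic surgery slope (here $|pqn-(pqn+1)|=1$), and $K(pq+\frac1n)$ is a lens space with $\pi_1(K(pq+\frac1n))\cong\mathbb{Z}_{|pqn+1|}$ abelian. I will prove both inclusions and then note that the two parts are instances of the same argument.

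The inclusion $[G(K),G(K)]\subset\bigcap_{r\in\mathcal S}\langle\!\langle r\rangle\!\rangle$ is the easy direction. For every cyclic surgery slope $r$ the quotient $G(K)/\langle\!\langle r\rangle\!\rangle=\pi_1(K(r))$ is cyclic, hence abelian, so the canonical projection kills all commutators and $[G(K),G(K)]\subset\langle\!\langle r\rangle\!\rangle$. Intersecting over $r\in\mathcal S$ gives the claim, and the same reasoning applies verbatim to each term of the descending chain in (i).

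For the reverse inclusion I would argue via first homology of the fillings. Suppose $g\in\bigcap_{r\in\mathcal S}\langle\!\langle r\rangle\!\rangle$. Fix a nonzero integer $n$, set $r=pq+\frac1n$ and $m=pqn+1$. The canonical surjection $G(K)\to\pi_1(K(r))$ induces on first homology the reduction $H_1(E(K))=\mathbb{Z}\to H_1(K(r))=\mathbb{Z}_{|m|}$, under which the class of $g$ equals $\mathrm{ab}(g)\bmod m$. Since $g\in\langle\!\langle r\rangle\!\rangle$, $g$ is trivial in $\pi_1(K(r))$, and a fortiori in $H_1(K(r))$, so $\mathrm{ab}(g)\equiv0\pmod{|m|}$. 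As $n$ ranges over the nonzero integers the orders $|pqn+1|$ are unbounded, so $\mathrm{ab}(g)$ is divisible by arbitrarily large integers and therefore $\mathrm{ab}(g)=0$, i.e.\ $g\in[G(K),G(K)]$. This proves (ii). I emphasize that this direction only uses the explicit slopes $pq\pm\frac1n$, so no classification of \emph{all} cyclic surgery slopes is needed beyond knowing these particular ones are cyclic.

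Statement (i) follows by the same divisibility argument restricted to the chain: the numerators along the chain of cyclic slopes are $pqn+1,\ (pq+1)(pqn+1),\ (pq+1)^2(pqn+1),\dots$, since the $k$-th term has numerator $(pq+1)^k(pqn+1)$, and these tend to infinity because $pq\ge 6$. Hence any $g$ in the intersection has $\mathrm{ab}(g)$ divisible by $(pq+1)^k(pqn+1)$ for all $k$, forcing $\mathrm{ab}(g)=0$ and $g\in[G(K),G(K)]$; combined with the easy inclusion for each term, this gives (i). The only point requiring care is this reverse inclusion—specifically that the chosen family of cyclic slopes produces fillings whose first homology groups have unbounded order—and that is precisely what the unboundedness of $|pqn+1|$ (respectively $(pq+1)^k(pqn+1)$) supplies.
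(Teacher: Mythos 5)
Your proof is correct and takes essentially the same approach as the paper's: the easy inclusion comes from cyclic quotients killing commutators, and the reverse inclusion from the fact that the map to each cyclic quotient factors through $H_1(E(K))\cong\mathbb{Z}$, so the unbounded orders $|pqn+1|$ (resp.\ $(pq+1)^k|pqn+1|$ along the chain) force $\mathrm{ab}(g)=0$. The only cosmetic difference is that the paper proves (i) first (writing $g=\mu^k h$ with $k\neq 0$ and choosing a chain term whose cyclic order exceeds $|k|$) and deduces (ii) from (i), whereas you prove (ii) directly and then specialize the divisibility argument to the chain.
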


\begin{proof}
(i) 
For simplicity, we denote the above sequence by $N_1\supset N_2 \supset \cdots$.
Then each quotient $G(K)/N_i$ is cyclic, so $[G(K),G(K)]\subset N_i$.
Hence $[G(K),G(K)]\subset \bigcap_{i = 1}^{\infty} N_i$.
Conversely, 
take an element $g\not \in [G(K),G(K)]$.
Then $g=\mu^k h$, where $\mu$ is a meridian, $h\in [G(K),G(K)]$ and $k\ne 0$.
However,
there exists $j$ such that  the order of the cyclic group $G(K)/N_j$ is bigger than $|k|$.
Then the epimorphism $G(K)\to G(K)/[G(K),G(K)]\to G(K)/N_j$ sends $g$ to a nontrivial element.
This means $g\not\in N_j$.
Hence $[G(K),G(K)]=\bigcap_{i = 1}^{\infty} N_i$.

(ii) 
Since $G(K)/ \langle\!\langle r \rangle\!\rangle$ is cyclic, 
$[G(K),G(K)]\subset \langle\!\langle r \rangle\!\rangle$ for any $r \in \mathcal{S}$. 
Thus $[G(K),G(K)] \subset \bigcap_{r \in \mathcal{S}} \langle\!\langle r \rangle\!\rangle$. 
Conversely, $\bigcap_{r \in \mathcal{S}} \langle\!\langle r \rangle\!\rangle \subset \bigcap_{i = 1}^{\infty} N_i = [G(K), G(K)]$. 
Hence $\bigcap_{r \in \mathcal{S}} \langle\!\langle r \rangle\!\rangle = [G(K), G(K)]$. 
\end{proof}

\begin{question}
For the infinite descending chain 
\[
\langle\!\langle r \rangle\!\rangle \supset \langle\!\langle r_1 \rangle\!\rangle\supset 
\langle\!\langle r_2 \rangle\!\rangle\supset \cdots
\]
consisting of non-cyclic finite surgery slopes in Theorem \ref{infinite_descending_sequence},
what is the intersection of these normal closures?
\end{question}

\section*{Funding}
This work was supported by Japan Society for the Promotion of Science,
KAKENHI Grant Numbers [JP15K17540 and JP16H02145 to I.T.,
JP26400099 to K. M., JP16K05149 to M.T.]; and
Joint Research Grant of Institute of Natural Sciences at Nihon University for 2017, 2018 to K. M. 

\medskip

\noindent
\textbf{Acknowledgements}{
We would like to thank the referee for a careful reading and useful comments,
and Toshio Sumi for some help of \texttt{GAP} at the beginning of this project and 
Ederson Dutra for useful comments.

\end{document}